\theoremstyle{plain}
\newtheorem{thm}{Theorem}[section]
\newtheorem{Def}[thm]{Definition}
\newtheorem{lem}[thm]{Lemma}
\newtheorem{Prop}[thm]{Property}
\newtheorem{cor}[thm]{Corollary}
\newtheorem{rem}[thm]{Remark}
\newtheorem*{def*}{Definition}
\newtheorem*{thm*}{Theorem}
\renewcommand{\epsilon}{\varepsilon}
\renewcommand{\theta}{\vartheta}
\renewcommand{\phi}{\varphi}
\renewcommand{\pi}{\uppi}
\renewcommand{\delta}{\updelta}
\DeclareMathOperator{\fff}{F}
\DeclareMathOperator{\gagaga}{STFT}
\DeclareMathOperator{\ststst}{S}
\DeclareMathOperator{\supp}{supp}
\DeclareMathOperator{\Span}{span}
\newcommand{\pt}[1]{\left( #1 \right) }
\newcommand{\ptg}[1]{\left\{ #1 \right\} }
\newcommand{\ptq}[1]{\left[ #1 \right] }
\newcommand{\abs}[1]{\left\lvert #1\right\rvert}
\newcommand{\norm}[1]{\left\|  #1  \right\| }
\newcommand{\Lprod}[2]{ \langle #1, #2\rangle }
\newcommand{\Stu}[1]{\ststst #1  }
\def\Rr{\mathbb R}
\def\R{\mathbb R}
\def\Rn{{\mathbb R}^d}
\def\Z{{\mathbb Z}}
\def\N{{\mathbb N}}
\def\Ll{L^2 \pt{\Rr}}
\def\Lln{L^2 \pt{\Rn}}
\def\S{{\mathcal S}}
\def\F{\fff}
\def\dnbt{D_{p,\tau}}
\def\fdnbt{f_{p,\tau}}
\def\Lloi{L^2\pt{[0,1]}}
\def\pii{2\pi \mathrm{i}\,}
\def\ud{\mathrm{d}}
\def\Bpt{B^{\alpha}_{p, \tau}}
\def\Bpg{\varphi^{\alpha}_{p,k}}
\def\Bpp{\varphi_{\gamma}}
\def\Bapi{X_{p, \ell}}
\def\iap{i_{\alpha;p}}
\def\sap{s_{\alpha;p}}
\def\bap{\beta_{\alpha}\pt{p}}
\def\bpp{\beta\pt{p}}
\def\bppbar{\beta\pt{\bar{p}}}
\def\Iap{I_{\alpha;p}}
\def\Iapi{I^{\ell}_{p}}
\def\Iapibar{I^{\bar{\ell}}_{\bar{p}}}
\def\supp{\mathrm{supp\,}}
\newcommand{\ia}[1]{i_{\alpha;{#1}}}
\newcommand{\sa}[1]{s_{\alpha;{#1}}}
\newcommand{\ba}[1]{\beta_{\alpha}(#1)}
\newcommand{\Ia}[1]{I_{\alpha;{#1}}}
\def\Gapm{\Phi^\mu_{p;\alpha}}
\def\Gppm{\Phi^{\mu}_{p;\ell}}
\def\Gppmbar{\Phi^{\mu}_{\bar{p};\bar{\ell}}}
\def\PGapm{\Psi^\mu_{p;\alpha}}
\def\Bapkmn{\varphi^{\alpha;\mu, \nu}_{p,k}}
\def\Famnfg{\S^{\alpha;\mu,\nu}_{\varphi,\varphi}}
\def\PBapkmn{\psi^{\alpha;\mu, \nu}_{p,k}}
\def\PFamnfg{\S^{\alpha;\mu,\nu}_{\varphi,\psi}}
\def\vpt{\varphi^{\alpha}_{p,k}}
\newcommand\dist[2]{\mathrm{d}\pt{#1,#2}}
\numberwithin{equation}{section}
\begin{document}
\begin{abstract}
  We show the existence of a family of frames of $L^2(\R)$ which depend on
  a parameter $\alpha\in [0,1]$. If $\alpha=0$, we recover the usual Gabor frame,
  if $\alpha=1$ we obtain a frame system which is closely related to the so called 
  DOST basis, first introduced in \cite{ST07} and then analyzed in \cite{Battisti2015}.
  If $\alpha\in (0,1)$, the frame system is associated to a so called $\alpha$-partitioning
  of the frequency domain. 
  Restricting to the case $\alpha=1$, we provide a truly $n$-dimensional version of the DOST basis
  and an associated frame of $L^2(\Rr^d)$.
\end{abstract}
\vspace{1em}

\title{Explicit construction of non-stationary frames for $L^2$}

\author{Ubertino Battisti, Michele Berra}
\address{Dipartimento di Scienze Matematiche,
Politecnico di Torino, corso Duca degli Abruzzi 24, 10129 Torino,
Italy}
\email{ubertino.battisti@polito.it}
\address{Universit\`a di Torino, Dipartimento di Matematica, via Carlo Alberto 10, 10123 Torino, Italy}
\email{michele.berra@unito.it}

\maketitle

\section{Introduction}
One of the most intriguing problems of modern Time-Frequency analysis is the construction of new efficient methods to 
represent signals, which can be one dimensional or, more often, multidimensional, such as digital images. 
The increasing amount of data and their complexity forces to develop optimized techniques that address the representation in a fast and efficient way. 

The starting point of this paper is the definition of the $\ststst$-transform, first introduced by R. G. Stockwell \emph{et al.}
in \cite{ST96} as
\begin{align}
  \label{eq:defStrans}
  (\ststst f)(b, \xi)= \pt{2\pi}^{-\frac{1}{2}}\int e^{ -\pii t \xi} |\xi| f(t) e^{- \frac{\xi^2(t-b)^2}{2}} dt.
\end{align}
 The expression of the $\ststst$-transform
\eqref{eq:defStrans} is formally very similar to the so called Short Time Fourier Transform or Gabor Transform 
($\gagaga$) with Gaussian window
\[
  \pt{\gagaga f}\pt{b, \xi}= \pt{2\pi}^{-\frac{1}{2}}\int e^{ -\pii t \xi} f(t) e^{- \frac{(t-b)^2}{2}} dt.
\]
The main novelty of the $\ststst$-transform  is the frequency depending window. The leading idea is  the heuristic fact that, in order to detect high frequencies, it is enough to consider a shorter time. Therefore, the width of the Gaussian is not fixed but depends on the frequency, shirking as far as the frequency increases. 
The $\ststst$-transform was introduced to improve the analysis of seismic imaging, and it is now considered an important
tool in geophysics, see \cite{Geo15}. In  \cite{MR2666947}, the connection between the phase of
$\ststst$-transform and the instantaneous frequency - useful in several applications - has been studied.
See  \cite{BIDA13,CH14,Drabycz:2009lq,GZBM04,BH13,LA14,GA13,ZU13} for some applications of the $\ststst$-transform to signal processing in general. 

From the mathematical point of view, M. W. Wong and  H. Zhu in \cite{WZ07} introduced a generalized version of the
$\ststst$-transform as follows
\[
  \pt{\Stu_{\phi}{f}}\pt{b,\xi} =\int_{\Rr}
e^{-\pii t\xi}\,f\pt{t}\abs{\xi} \overline{\phi\pt{\xi\pt{t-b}}}\, dt,\qquad b,\xi\in\Rr,
\]
where $\phi$ is a general window function in $L^2(\R)$. The $\ststst$-transform
has a strong similarity with the $\gagaga$, actually it is also possible to show a deep link with the wavelet transform, see  \cite{GLM06,VE08}.
In fact, the $\ststst$-transform can be seen as 
an hybrid between  the $\gagaga$ and the wavelet transform. Representation Theory provides a very deep connection among
$\ststst$-transform, $\gagaga$ and wavelet transform, as they all relate to the representation
of the so-called affine Weyl-Heisenberg group, studied in \cite{kalisa1993n}. 
This connection has been highlighted in the multi-dimensional case by L. Riba in \cite{RI14}, see also \cite{RW15}.  
The affine Weyl-Heisenberg group is also the key to represent the $\alpha$-modulation groups, \cite{DA08}.

Our analysis focuses on the DOST (Discrete Orthonormal Stockwell Transform), a discretization of the S-Transform, first introduced by R. G. Stockwell in \cite{ST07}.
In \cite{Battisti2015},
the DOST transform has been studied from a mathematical point of view. It is shown that the
DOST is essentially the decomposition of a periodic signal $f\in L^2([0,1])$
in a suitable orthonormal basis defined as
\begin{align}
\label{dostbasis}\bigcup_{p\in\Z} D_p & = \bigcup_{p\in\Z} \ptg{\dnbt}_{\tau = 0}^{2^{p-1}},
\end{align}
where
\begin{align*}
\dnbt (t) & =\frac{1}{\sqrt{2^{p-1}}}\sum_{\eta=2^{p-1}}^{2^{p}-1} e^{\pii \eta \pt{t-\frac{\tau}{2^{p-1}}}},\qquad t\in\Rr, p\geq1, \tau=0, \ldots, 2^{p-1}-1,    
\end{align*}
with the convention that $D_0(t) = 1$, see Section \ref{S:alfamod} for the precise description of the basis functions.
The DOST basis has a non stationary time-frequency localization, roughly speaking the time localization increases as
the frequency increases, while the frequency localization decreases as the frequency increases. Therefore,
the basis decomposition of a periodic signal is able to localize high frequencies, for example spikes.
The time-frequency localization properties of the DOST basis imply that the coefficients
\[
  \fdnbt = \pt{f, \dnbt}_{\Lloi}, 
\]
represent the time-frequency content of the signal $f$ in a certain time-frequency box, which is 
related to a dyadic decomposition in the frequency domain. 
Moreover, this decomposition can be seen as a sampling of a generalized $\ststst$-transform with a particular analyzing window
which is essentially a box car window in the frequency domain. 

The DOST transform gained interest in the applied world after the FFT-fast algorithm discovered by Y. Wang ang J. Orchard, see \cite{WAth, 
wang2009fast} for the original algorithm and \cite{Battisti2015} for a slightly different approach which shows that
the fast algorithm is essentially a clever application of Plancharel Theorem.

The orthogonality property is clearly very useful for several applications,
nevertheless it is well known that, in order to describe signals, a certain amount of redundancy can be very effective.
Therefore, it is a natural task to look for frames associated to the DOST basis. 

We follow the idea of the construction of Gabor frames:
\[
  \mathcal{G}(g, \alpha, \beta)=\ptg{ T_{\alpha k}M_{\beta n} g}= \ptg{e^{\pii \beta n( t-\alpha k)} g\pt{t- \alpha k}}_{(k,n)\;\in \Z^2}.
\]
If $g$ is a suitable window function, for example a Gaussian, and $\alpha \beta<1$ then $\mathcal{G}(g, \alpha, \beta)$
is a frame, see for example \cite{GR01}. It is possible to consider the Gabor frame as the frame associated to the standard
Fourier basis, which is formed by all modulations with integer frequency,  extended by periodicity and
then localized using the translation of the analyzing window function $g$. 
Inspired by this approach, we consider the system of functions
\begin{align}
  \label{eq:sist}
  \ptg{T_{k \frac{\nu}{\beta(p)}  } \pt{D_{p, 0}(t) g(t)}}_{(p,k)\; \in \Z^2}, \quad \beta(p)= 2^{|p|-1}\mbox{ if }p\neq0, \beta(0)=1, \nu>0.
\end{align}
For simplicity, here we have not introduced a frequency parameter. The idea of the system in \eqref{eq:sist}
is to consider the DOST basis $D_{p, 0}$ and then localize it with a window function $g$.
The main difference from the standard Gabor system is that the translation parameter $k \frac{\nu}{\beta(p)}$ is not uniform, but 
depends on the frequency parameter $p$. Therefore, \eqref{eq:sist} can be considered as 
a non stationary Gabor frame, using the terminology introduced in \cite{NSGF11}.
Inspired by the theory of   $\alpha$-modulation frames, see \cite{Feichtinger:2006rt}, \cite{FG85}, \cite{Fornasier2007157},               \cite{MR2714948}; in Section \ref{S:alfamod} we introduce a family of bases of $L^2([0,1])$
depending on a parameter $\alpha \in [0,1]$. If $\alpha=0$ we recover the standard Fourier basis, if $\alpha=1$ the DOST basis; when $\alpha \in (0,1)$ we show that the basis is associated to a suitable $\alpha$-partitioning
of the frequency domain in the sense of \cite{Fornasier2007157}.

In Section \ref{sec:dim1}, we prove the main result, see Theorem \ref{T:Frame_prop}; we show that for each $\alpha \in [0,1]$ the localization procedure explained above produces 
frames of $L^2(\R)$, provided the time and frequency parameter are small enough.
The main tool is a non stationary version of the Walnut representation, see Subsection \ref{subsec:Walnut}.

In Section \ref{sec:dimd}, we analyze the higher dimensional case. 
Restricting to $\alpha=1$, we consider a multidimensional partitioning 
of the frequency domain and the associated frames. This construction is different to the usual extension
of the DOST to higher dimensions. In \cite{WAth,wang2009fast}, the DOST applied to two dimensional signals (digital images)
was essentially the one dimensional DOST applied in the vertical and then in the horizontal direction, therefore 
it was not a truly bi-dimensional version of the transform.

%
%

%
%
\section{Notations}
In the paper we use the following normalization of the Fourier Transform
\[
  \F(f)(\omega)= \hat{f}(\omega)=\int e^{-2 \pi i x \omega} f(x) dx.
\]
In the multidimensional case, we do not write explicitly the inner product in $\R^d$, 
we leave the same notation of the one dimensional case.

Set
\[
  M_{k} f(x)= e^{2 \pi i k x} f(x),\quad k, x \in \R^d.
\]
and
\[
  T_k f(x)= f\pt{x-k}, \quad  k,x \in \R^d.
\]
Let $f,g\in L^2(\R^d)$, we denote the $L^2$-scalar product as
\[
  \langle f, g \rangle= \int f \bar{g} dx.
\]
We denote $\S(\R^d)$ the Schwartz space. 
 \section{$\alpha$-bases of $\Lloi$}
 \label{S:alfamod}
 Let $\alpha \in [0,1]$, we define a partition of $\N$
 associated to the parameter $\alpha$. We use an iterative approach.
 Let $p$ be a non negative integer. 
 For each $\alpha$, we define
 \begin{equation}\label{eq:bap}
   \begin{array}{llll} 
    \ia 0=0,       &\sa 0=1, & \ba 0= 1,  & \\
    \iap=s_{\alpha;p-1}, & \sap= \iap+\lfloor \iap^\alpha\rfloor      & \bap=
    \lfloor\iap^\alpha\rfloor,
    &\quad p\geq 1,
   \end{array} 
 \end{equation}
 where $\lfloor x\rfloor$ is the integer part of $x$.
 Then, set
 \begin{align*} 
  \Iap=[\iap, \sap)\cap \N & \quad p\text{th interval of the partition of } \N,\\
  \bap=\abs{\Iap}= \abs{\sap-\iap}&\quad \text{width of the interval }I_p.
 \end{align*}

 We then set for all $p\in \N$
 \begin{align*}
   \Bpt(t)= \frac{1}{\sqrt{\bap}}\sum_{\eta=\iap}^{\sap-1} 
   e^{\pii \eta \pt{t- \frac{\tau}{\bap}}}, \quad \tau=0, \ldots, \bap-1.
 \end{align*}
 See Figure~\ref{F: Dost_elements} for a plot of real and imaginary part of such basis with different values of $\alpha$ and $p$.
 \begin{figure}
\begin{tabular}{ccc}
\subfloat[$\alpha=0,~p=1$]{\includegraphics[width=.27\linewidth]{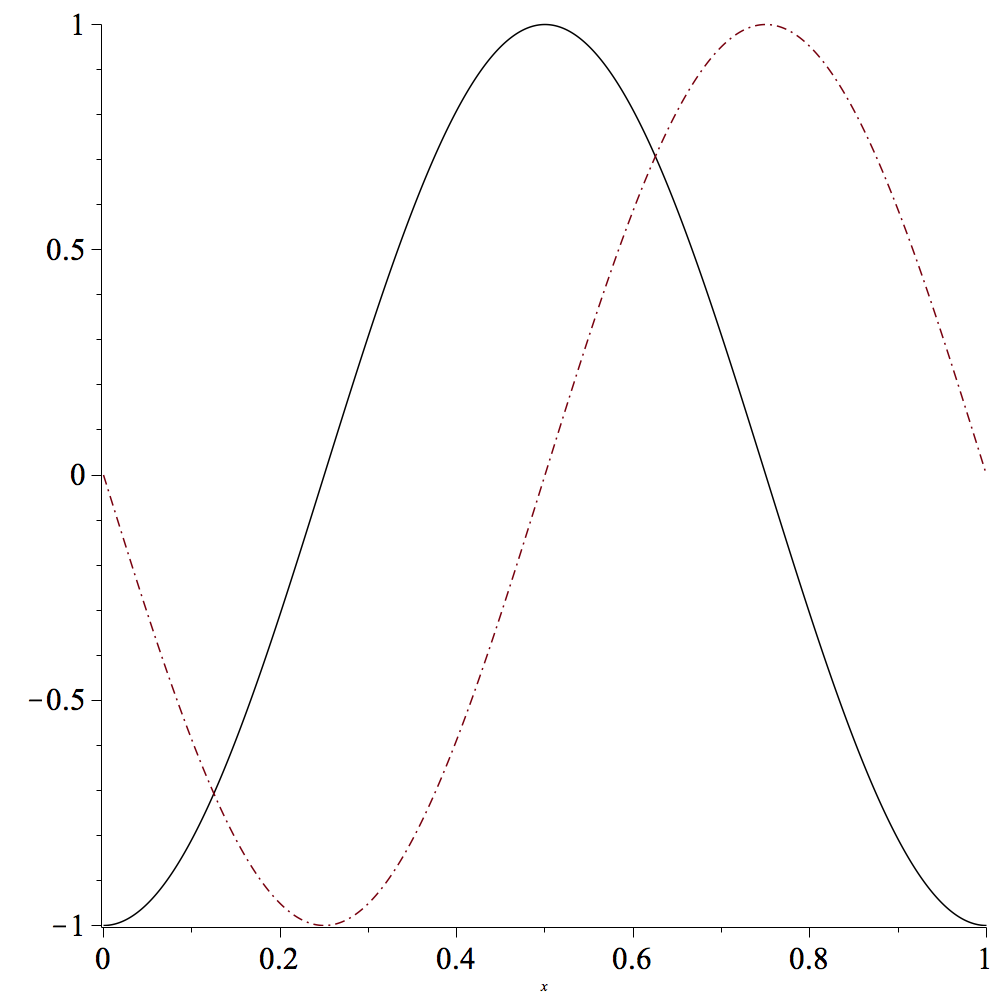}} &
\subfloat[$\alpha=0,~p=5$]{\includegraphics[width=.27\linewidth]{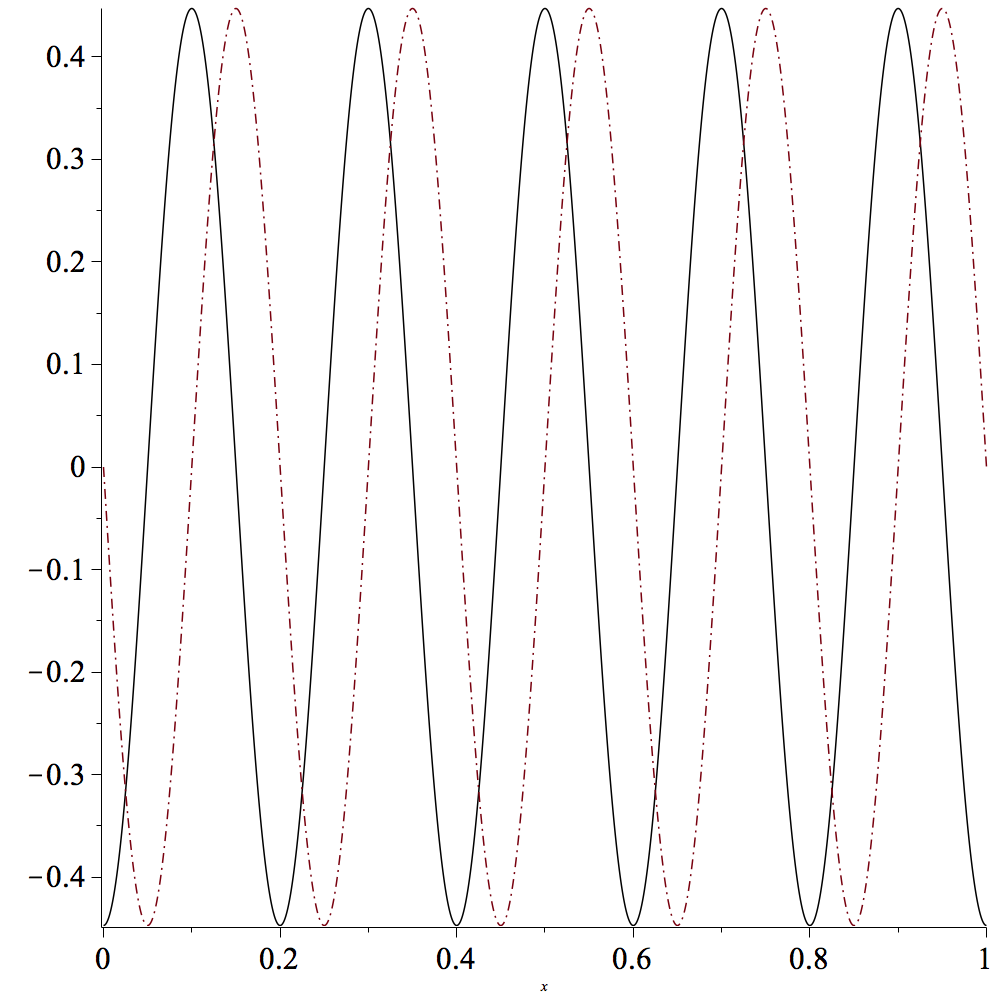}} &
\subfloat[$\alpha=0,~p=7$]{\includegraphics[width=.27\linewidth]{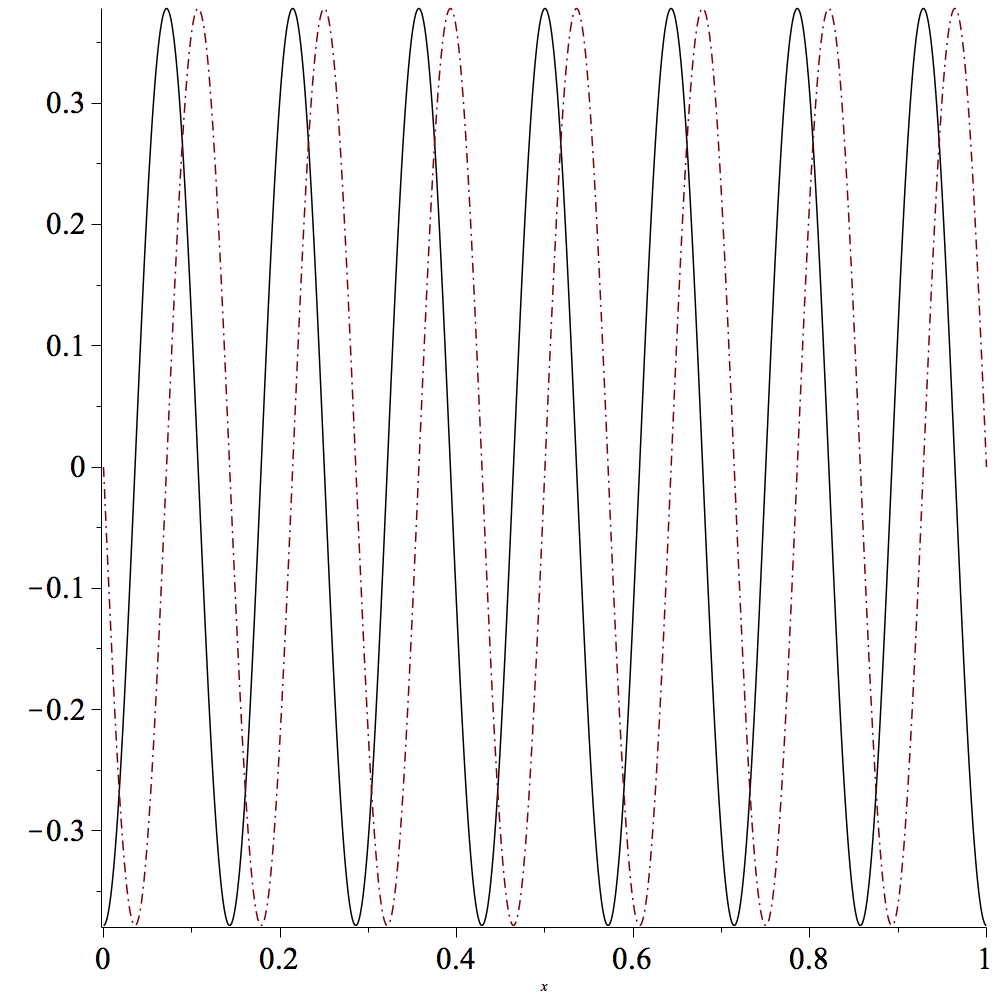}} \\
\subfloat[$\alpha=0.5,~p=1$]{\includegraphics[width=.27\linewidth]{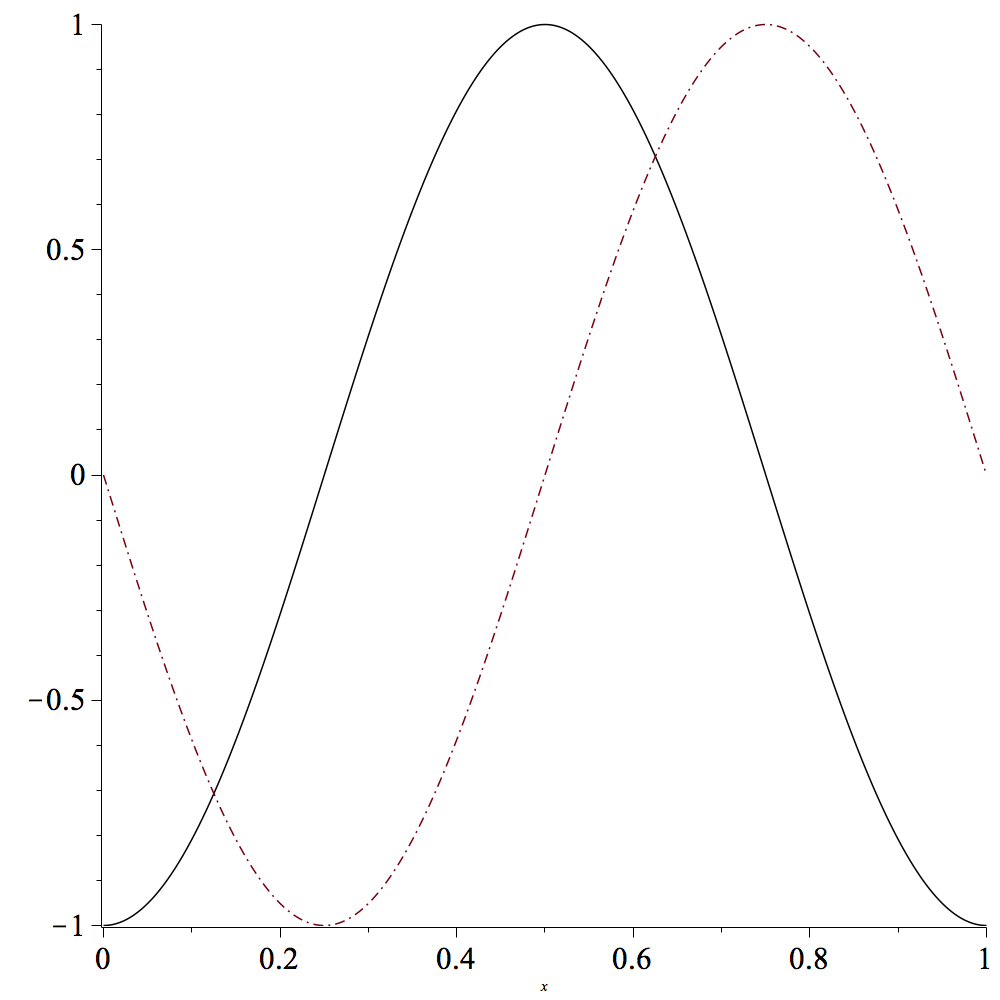}} &
\subfloat[$\alpha=0.5,~p=5$]{\includegraphics[width=.27\linewidth]{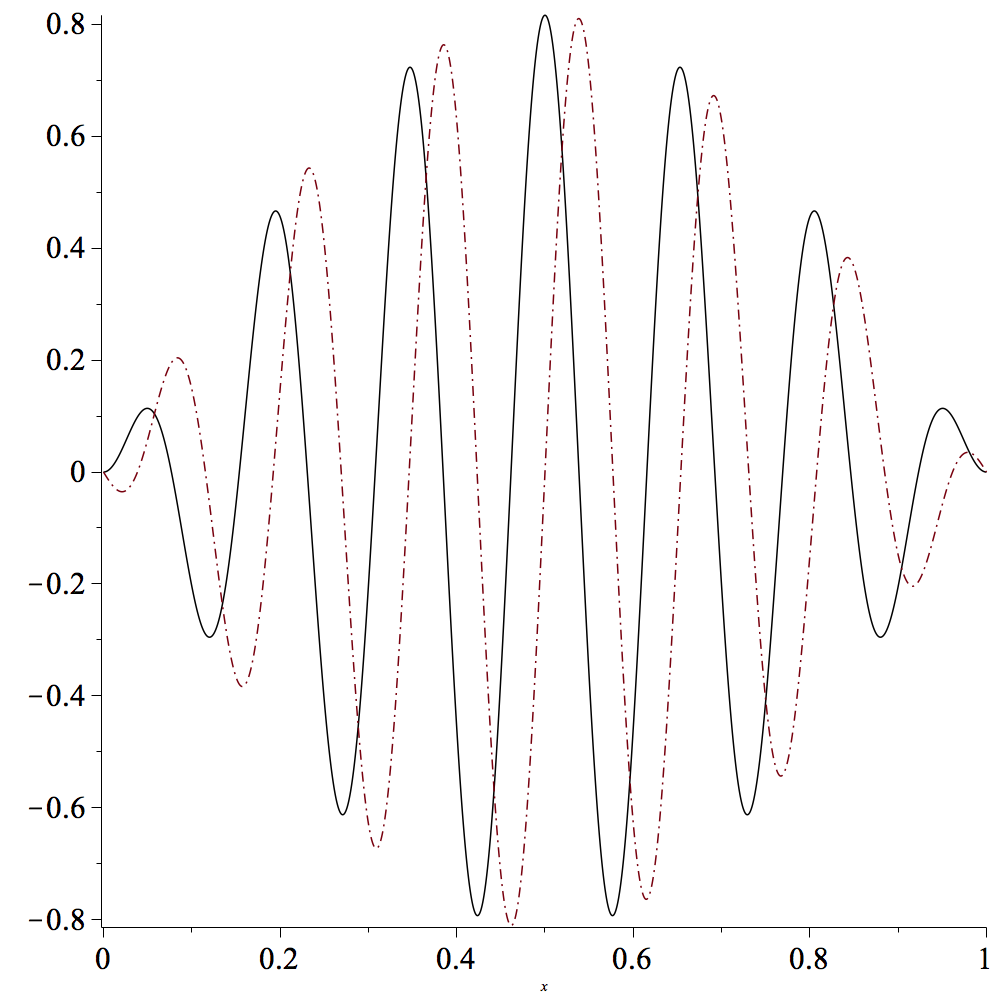}} &
\subfloat[$\alpha=0.5,~p=7$]{\includegraphics[width=.27\linewidth]{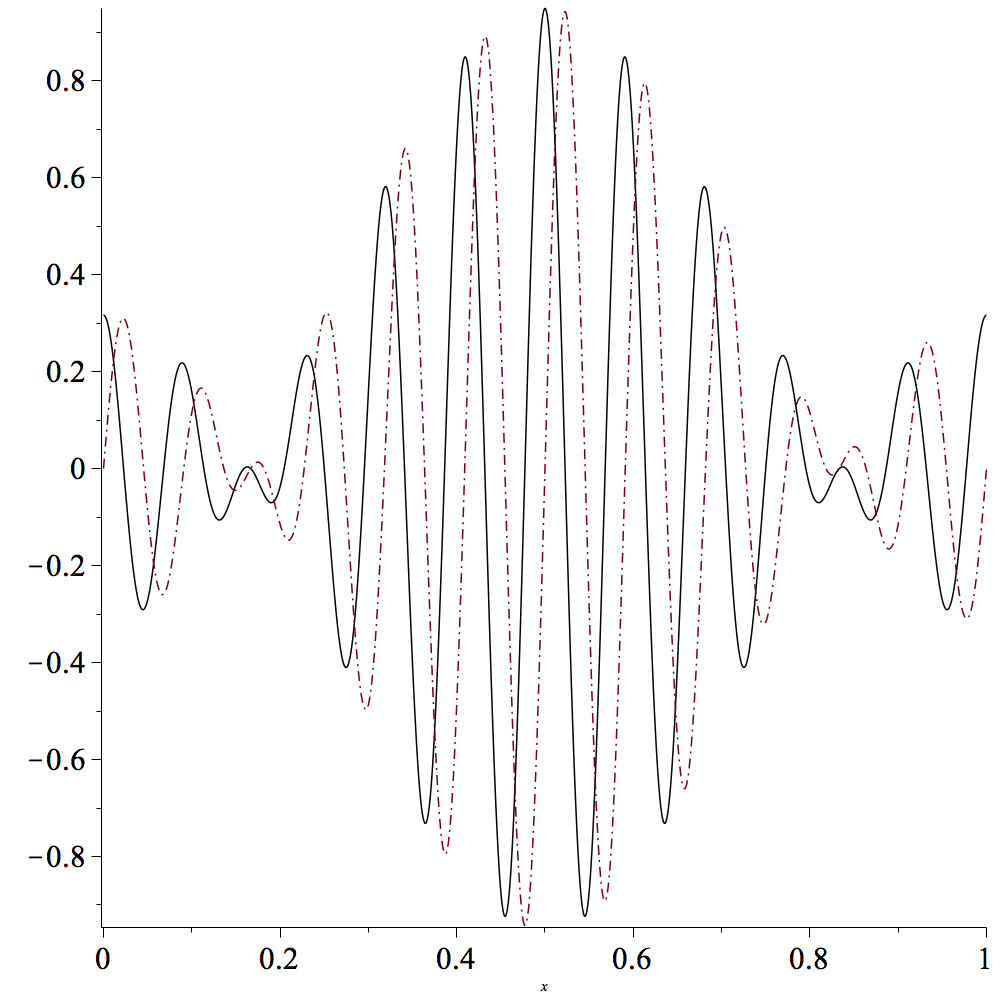}} \\
\subfloat[$\alpha=1,~p=1$]{\includegraphics[width=.27\linewidth]{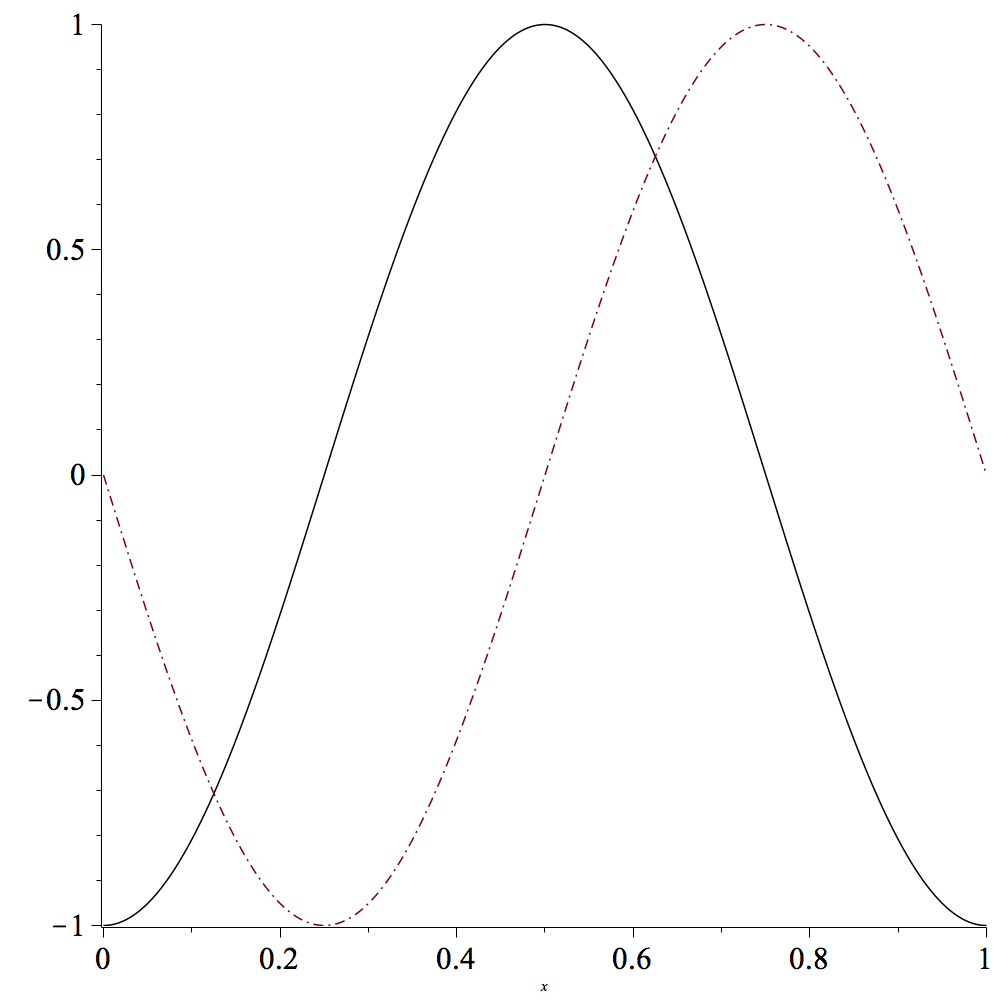}}  &
\subfloat[$\alpha=1,~p=5$]{\includegraphics[width=.27\linewidth]{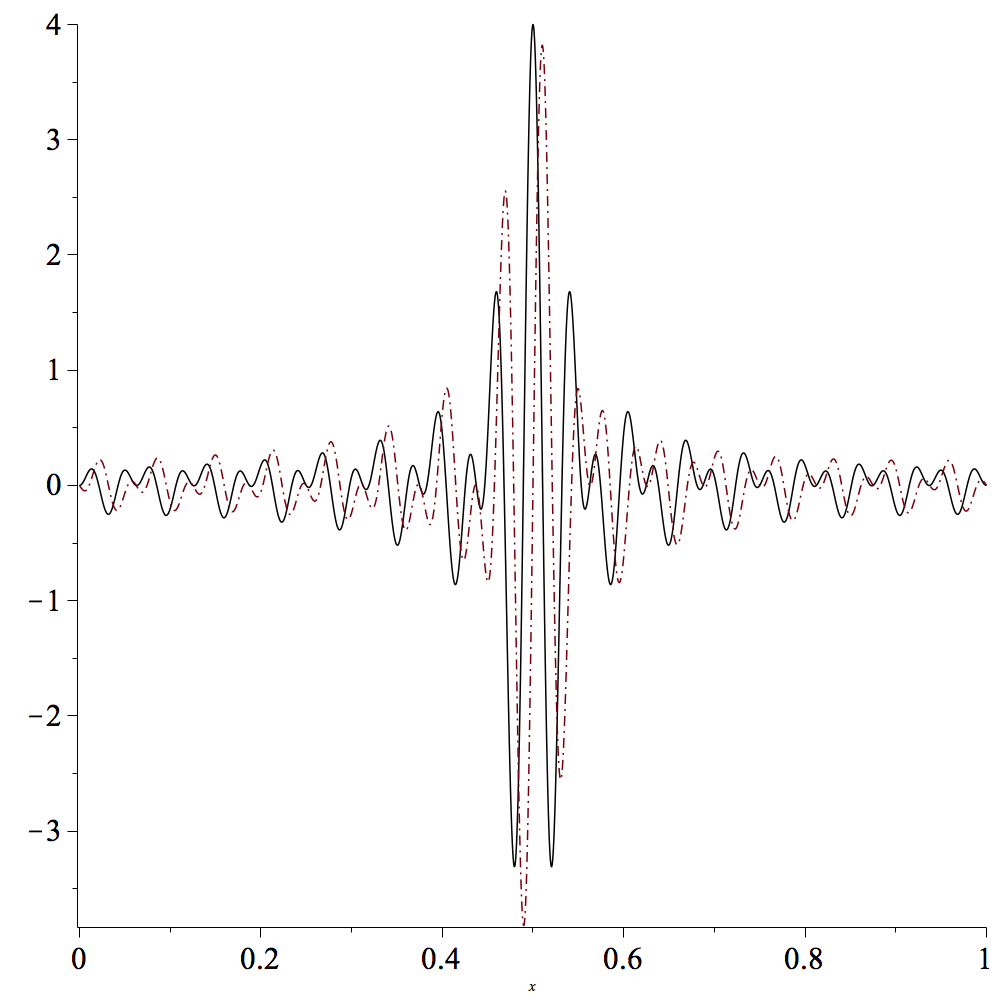}}  &
\subfloat[$\alpha=1,~p=7$]{\includegraphics[width=.27\linewidth]{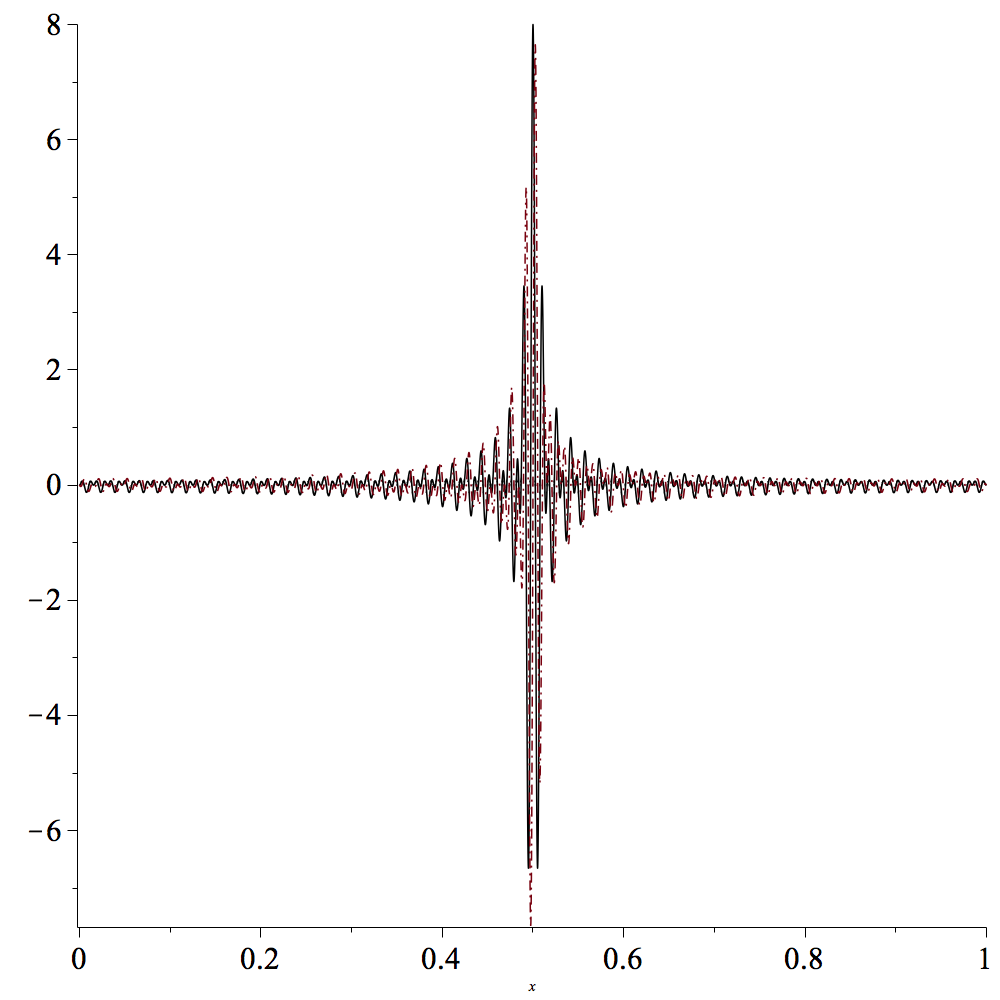}}  
\end{tabular}
\caption{Elements of the DOST bases. Notice that $\alpha=0$ is the classical Fourier basis which has no localization in time. As $\alpha$ or $p$ increases we gain time localization. The black line and the dotted-red one  represents the real and imaginary part respectively.}
\label{F: Dost_elements}		
\end{figure}

 For $p\in \Z_-$ we define
 \begin{align*}
    \Bpt(t)= \overline{B_{-p,\tau}}, \quad \tau=0, \ldots, \ba{-p}-1.
 \end{align*}
 Notice that if $\alpha=0$ then $\beta^0(p)=1$, for all $p$,
 therefore $\tau$ is always zero, hence
 \[
    B^0_{p,0}(t)= e^{\pii p t}.
 \]
 That is $\bigcup_{p} B^0_{p,0}$ is the ordinary Fourier basis of $\Lloi$.
 
 If $\alpha=1$, then $i_{1;0}=0$, $s_{1;0}=1$ and
 then $i_{1;p}=2^{p-1}$, $s_{1;p}=2^p$, $\beta_1(p)=2^{p-1}$ for $p\geq1$. Therefore
 \[
    B^1_{0,0}(t)=1
 \]
 while
 \[
    B^1_{p,\tau}(t)=\frac{1}{\sqrt{2^{p-1}} }\sum_{\eta=2^{p-1}}^{2^p-1} 
    e^{\pii \eta \pt{t- \frac{\tau}{2^{p-1}}}}, \quad \tau=0, \ldots, 2^{p-1}-1.
 \]
 That is $ \bigcup_{p, \tau} B^{1}_{p,\tau}$ is the so called DOST basis, see 
 \cite{Battisti2015}, \cite{ST07}.
 
 For now on, we restrict to positive integers,
 all the results hold true also for negative integers via simple arguments.
 
 Clearly the above partitioning can be considered on $\R$ instead of $\Z$. 
Let us consider the partitioning of the real line.
 Notice that for $\alpha=0$, trivially 
 \begin{align*}
  \beta^0(p)=1 = \eta^0, \quad \forall \eta\in I_{0;p}. 
 \end{align*}
 If $\alpha \in (0,1]$ then $\bap=\lfloor\iap^\alpha\rfloor$ and
 for $\eta \in \Iap=[\iap, \iap+\lfloor\iap^{\alpha}\rfloor)$ we have, for $|p|$ large enough,
  \begin{align*}
   \frac{1}{2^{\alpha+1} }
   \leq
   \frac{\lfloor\iap^{\alpha}\rfloor}{\pt{2 \iap}^\alpha }
   \leq 
    \frac{\lfloor\iap^{\alpha}\rfloor} {\pt{\eta}^\alpha}
   \leq \frac{\iap^\alpha}{\iap^\alpha}=1.             
 \end{align*}
 Using Fornasier's notation, see \cite{Fornasier2007157},  we can write
 \[
    \abs{\Iap} \asymp |\eta|^\alpha, \quad \eta\in \Iap.
 \]
That is the above partitioning is an $\alpha$-covering. 
\begin{rem}
  Notice that the $\alpha$-partitioning introduced above is well defined for all $\alpha\in [0,1]$.
  The case $\alpha=1$ is not defined as a limit case, as in the usual analysis
  of $\alpha$-modulation spaces, see \cite{Feichtinger:2006rt,Fornasier2007157}. The key point is that we use an
  iterative scheme, instead of  a definition involving the function $p^{\frac{\alpha}{1-\alpha}}$.
  In this way, we can get rid of the singularity which arises at $\alpha=1$. At $\alpha=1$ the growth 
  of $\beta(p)$ with respect to $p$ is exponential while for $\alpha\in [0,1)$ is polynomial, see
 Figure~\ref{F:overl}.
  Using the iterative definition of $\beta(p)$ this fact causes no problems. 
\end{rem}
\begin{figure}
                \includegraphics[width=.7\textwidth]{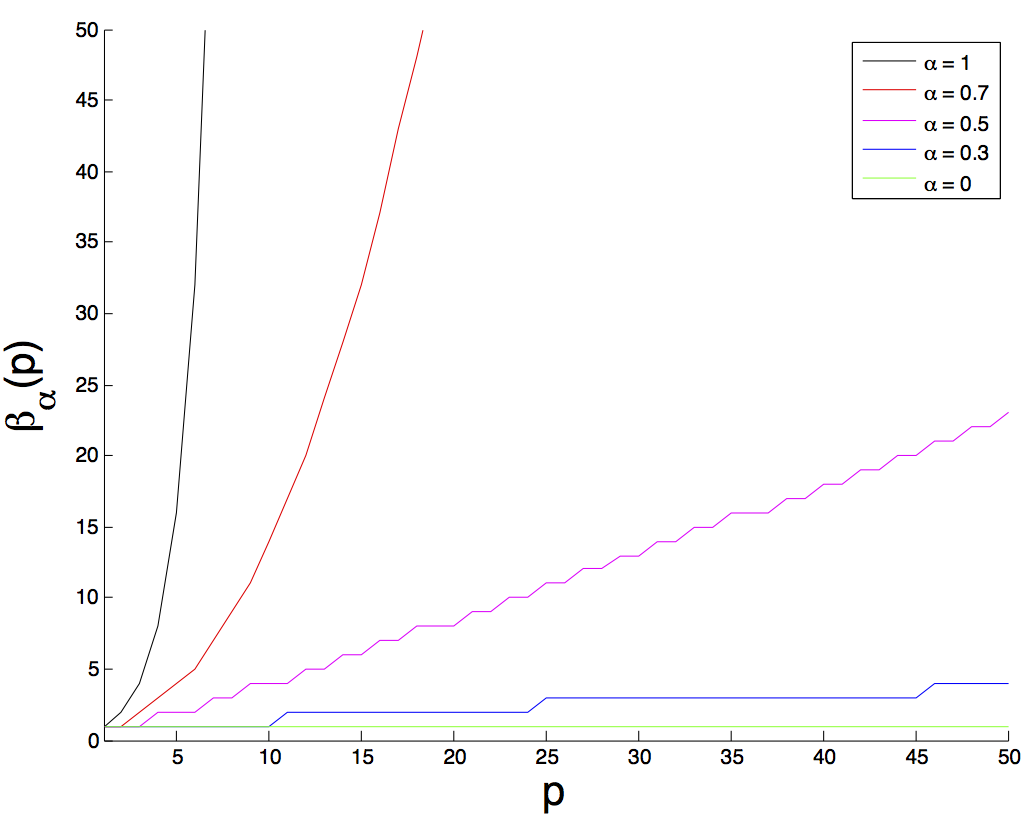}
                \caption{Growth of $\bap$ with the respect of the parameter $\alpha$.}\label{F:overl}
                \end{figure}
\begin{thm}
The functions
\[
 \Bpt\pt{\cdot}, \quad p\in \Z, \tau=0, \ldots, \bap-1
\]
form an orthonormal basis of $\Lloi$.
\end{thm}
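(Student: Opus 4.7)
My strategy is to treat the system block by block in the frequency domain, reducing everything to a finite-dimensional DFT computation inside each frequency block.

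First, I would establish the block structure. The iterative construction in \eqref{eq:bap} gives $s_{\alpha;p}=i_{\alpha;p+1}$ and $\beta_\alpha(p)\geq 1$ for every $p\geq 0$ (indeed $i_1=1$, $\beta_\alpha(1)=1$, and thereafter $\lfloor i_{\alpha;p}^{\alpha}\rfloor\geq 1$ since $i_{\alpha;p}\geq 1$), so the intervals $I_{\alpha;p}$ form a disjoint partition of $\N$. Each $B^{\alpha}_{p,\tau}$ lies in the finite-dimensional subspace
\[
V_p := \Span\{e^{\pii \eta t}:\eta\in I_{\alpha;p}\}\subset \Lloi.
\]
Extending to $p<0$ by conjugation, the analogous subspaces $\overline{V_{-p}}$ are spanned by negative-frequency Fourier modes, and together with $V_0,V_1,\dots$ they yield an orthogonal Hilbert-space decomposition $\Lloi=\bigoplus_{p\in\Z} V_p$, since the full collection of Fourier modes $\{e^{\pii \eta t}\}_{\eta\in\Z}$ is an orthonormal basis of $\Lloi$ and the $I_{\alpha;p}$ (together with their negatives) partition $\Z$.

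Second, I would check orthonormality inside a single block. Fix $p\geq 0$ and parametrize $\eta=i_{\alpha;p}+j$ with $j\in\{0,\dots,\beta_\alpha(p)-1\}$. The change-of-basis matrix $U$ sending the orthonormal basis $(e^{\pii \eta t})_{\eta\in I_{\alpha;p}}$ of $V_p$ to the tuple $(B^{\alpha}_{p,\tau})_{\tau=0}^{\beta_\alpha(p)-1}$ has entries
\[
U_{\tau,j}=\frac{1}{\sqrt{\beta_\alpha(p)}}\,e^{-\pii(i_{\alpha;p}+j)\tau/\beta_\alpha(p)}=e^{-\pii\, i_{\alpha;p}\tau/\beta_\alpha(p)}\cdot\frac{1}{\sqrt{\beta_\alpha(p)}}e^{-\pii j\tau/\beta_\alpha(p)},
\]
which is a diagonal unitary phase times the standard DFT matrix on $\Z/\beta_\alpha(p)\Z$; hence $U$ is unitary and $\{B^{\alpha}_{p,\tau}\}_\tau$ is an orthonormal basis of $V_p$. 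Equivalently, expanding $\langle B^{\alpha}_{p,\tau},B^{\alpha}_{p,\tau'}\rangle$ and using $\int_0^1 e^{\pii(\eta-\eta')t}\,dt=\delta_{\eta,\eta'}$ reduces the inner product to the geometric sum $\frac{1}{\beta_\alpha(p)}\sum_{j=0}^{\beta_\alpha(p)-1}e^{\pii j(\tau'-\tau)/\beta_\alpha(p)}$, which equals $\delta_{\tau,\tau'}$ for $\tau,\tau'\in\{0,\dots,\beta_\alpha(p)-1\}$.

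Third, orthogonality across blocks is immediate by Parseval, since for $p\neq p'$ the Fourier supports $I_{\alpha;p}$ and $I_{\alpha;p'}$ are disjoint and hence $V_p\perp V_{p'}$. Combining this with the per-block statement and the decomposition $\Lloi=\bigoplus_p V_p$ established in the first step, the union $\{B^{\alpha}_{p,\tau}\}_{p,\tau}$ is an orthonormal basis of $\Lloi$. There is no serious obstacle here; the only point requiring care is ensuring that the iterative definition actually produces a partition of $\N$ with $\beta_\alpha(p)\geq 1$, which amounts to the elementary check above.
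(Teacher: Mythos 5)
Your proof is correct, and while it shares the paper's overall skeleton (partition the frequency axis into the blocks $\Iap$, work inside each block, use disjointness of Fourier supports across blocks), the within-block argument is genuinely different. The paper verifies orthonormality of $\ptg{\Bpt}_{\tau=0}^{\bap-1}$ by summing a geometric series, and then proves completeness in $\Span\ptg{e^{\pii \eta t}}_{\eta\in\Iap}$ \emph{separately}, by showing linear independence through the nonvanishing of a Vandermonde determinant (the paper's text actually contains a slip at this point, asserting the determinant ``is zero'' when it means the opposite). You instead observe that the change-of-basis matrix from $\pt{e^{\pii \eta t}}_{\eta\in\Iap}$ to $\pt{\Bpt}_{\tau}$ is a diagonal unitary phase times the normalized DFT matrix on $\Z/\bap\Z$, hence unitary; this single observation delivers both orthonormality and the spanning property of the block at once, makes the dimension count automatic, and sidesteps the Vandermonde computation entirely. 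The paper's route is more elementary and self-contained (it never invokes unitarity of the DFT), whereas yours is shorter and makes the structural reason for the result more transparent. Your preliminary check that the iterative scheme \eqref{eq:bap} produces a genuine partition of $\N$ with $\bap\geq 1$ is a point the paper leaves implicit, and it is worth having made it explicit.
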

\begin{proof}
 Notice that for $\alpha=0$ and $\alpha=1$ the Theorem
 holds true in view of well known properties of Fourier basis and 
 of results proven in \cite{Battisti2015}.
 
 The proof follows closely the argument in \cite{Battisti2015}.
 \\
 \noindent \textbf{Step 1} \emph{$\norm{\Bpt}_{\Lloi}=1$.}\medskip\\
 Since $\ptg{e^{\pii k t}}_{k\in \Z}$ is an orthonormal basis of $\Lloi$ we can write 
 \begin{align*}
   \pt{\Bpt, \Bpt}_{\Lloi}&= \frac{1}{\bap}\sum_{\eta=\iap}^{\sap-1} \sum_{\eta'=\iap}^{\sap-1} \int
   e^{\pii \eta\pt{t-\frac{\tau}{\bap}} } e^{-\pii \eta'\pt{t-\frac{\tau}{\bap}} } dt\\
                          &= \frac{1}{\bap} \sum_{\eta=\iap}^{\sap-1} \sum_{\eta'=\iap}^{\sap-1} \delta_{0}\pt{\eta-\eta'}\\
                          &= \frac{1}{\bap} \sum_{\eta=\iap}^{\sap-1}1\\
                          &=1.
 \end{align*}
 \medskip\\
 \noindent \textbf{Step 2} \emph{$\pt{\Bpt,B^\alpha_{p',\tau'}}_{\Lloi}=\delta_0\pt{p-p'}\delta_0
 \pt{\tau-\tau' }$.}
 \medskip\\
 \noindent If $p=p'$ and $\tau=\tau'$ the first step implies the assertion. 
  Moreover, if $p\neq p'$ then  $\Ia p$ and $\Ia {p'}$ are disjoint, thus
  \begin{align*}
      \pt{\Bpt,B^\alpha_{p',\tau'}}_{\Lloi}=0.
  \end{align*} 
 So, we can restrict to the case $p=p'$.
 Since $\ptg{e^{\pii k t}}_{k\in \Z}$ is an orthonormal basis of $\Lloi$, we obtain 
 \begin{align*}
   \pt{\Bpt,B^\alpha_{p,\tau'}}_{\Lloi}&=\frac{1}{\bap} \sum_{\eta=\iap}^{\sap-1}  e^{\pii\pt{\tau-\tau'}\frac{\eta}{\bap}}\\
				      &=\frac{1}{\bap} \sum_{j=0}^{\bap-1}  e^{\pii\pt{\tau-\tau'}\frac{\pt{\iap+j}}{\bap}}.
 \end{align*}
 Let us suppose $\tau-\tau'\neq 0$, then we can write
 \begin{align}
  \nonumber
  \pt{\Bpt,B^\alpha_{p,\tau'}}_{\Lloi}&=\frac{ e^{\pii \pt{\tau-\tau'} \frac{\iap}{\bap} }}{\bap} \sum_{j=0}^{\bap-1}  \pt{e^{\pii\pt{\tau-\tau'}\frac{1}{\bap}}}^j\\
   \label{eq:geo}
   &= \pt{\frac{ e^{\pii \pt{\tau-\tau'} \frac{\iap}{\bap} }}{\bap}}\pt{ \frac{1- e^{\pii \pt{\tau-\tau'} \frac{\bap}{\bap}}}{1-e^{\pii \pt{\tau-\tau'} \frac{1}{\bap}}}}\\
   \nonumber
   &=0.
 \end{align}
 In equation \eqref{eq:geo} we used well known properties of geometric series and the fact that $\frac{\tau-\tau'}{\bap}$
 is not an integer number and therefore $e^{\pii \pt{\tau-\tau'} /\bap} \neq 1$.
 \medskip\\
 \noindent \textbf{Step 3} \emph{The functions $\Bpt$ generate $\Lloi$.}
 \medskip\\
 Notice that 
 \[
   \bigcup_{\tau=0}^{\bap-1} \Bpt \subseteq \Span\ptg{e^{\pii k t}}_{k\in [\iap, \sap-1]}.  
 \]
 Therefore, to prove that the functions $\Bpt$ are a basis of $\Lloi$ it is sufficient to check
 that $\bigcup_{\tau=0}^{\bap-1} \Bpt$ are linear independent.
 That is, if
 \begin{align}
   \label{eq:base}
   \sum_{\tau=0}^{\bap-1} \alpha_\tau \Bpt=0 \Rightarrow \alpha_\tau=0, \quad \tau=0, \ldots, \bap-1. 
 \end{align}
 Let us consider the projection of \eqref{eq:base} into the Fourier basis. We obtain the system of equation
 \begin{align}
   \label{eq:sistalpha}
   \sum_{\tau=0}^{\bap-1} \alpha_\tau e^{-\pii \tau \frac{\iap+j}{\bap}} = 0, \quad j=0, \ldots, \bap-1.
 \end{align}
 We can rewrite the above equation as a linear system
 \begin{align}
    \label{eq:vander}
    \left(
          \begin{array}{cccc}
           1 & e^{-\pii \frac{\iap}{\bap}}& \ldots & e^{-\pii (\bap-1) \frac{\iap}{\bap} }\\
           1 & e^{-\pii \frac{\iap+1}{\bap}}& \ldots & e^{-\pii (\bap-1) \frac{\iap+1}{\bap} }\\
           \vdots & \vdots & \ddots &\vdots\\
           1 & e^{-\pii \frac{\iap+\bap-1}{\bap}}& \ldots & e^{-\pii  (\bap-1)\frac{\iap+\bap-1}{\bap} }
          \end{array}
          \right) \cdot 
          \left(
           \begin{array}{c}
             \alpha_0\\
             \alpha_1\\
             \vdots\\
             \alpha_{\bap-1}
           \end{array} 
          \right)
          =
          \left(
           \begin{array}{c}
             0\\
             0\\
             \vdots\\
             0
           \end{array} 
          \right).
 \end{align}
 The square matrix in \eqref{eq:vander} is a Vandermonde matrix. Since
 the entries $\pt{e^{-\pii (\iap+j)/\bap}}_{j=0}^{\bap-1}$ are all distinct
 the determinant of the matrix is zero and therefore the system in \eqref{eq:vander}
 has only the null solution, that is in \eqref{eq:sistalpha} and \eqref{eq:base},
 $\alpha_\tau$ mus vanish for all $\tau=0, \ldots, \bap-1$.
 \end{proof}
 \subsection{Localization properties of the functions $\Bpt$}
 Now, we investigate the time-frequency localization properties of the functions $\Bpt$.
 They clearly have compact support in the frequency domain and the support is
 precisely $\Iap$. Therefore, they cannot have compact support in the time domain. Nevertheless,
 it is possible to determine localization properties in the spirit
 of Donoho-Stark Theorem, \cite{DS89}. 
 \begin{Prop}
   For each $\alpha\in [0, 1]$ and $p, \tau$, the following inequality holds
   \[
     \pt{\int_{\frac{\tau}{\bap}-\frac{1}{2 \bap} }^{\frac{\tau}{\bap}+\frac{1}{2 \bap}} |\Bpt|^2 dt}^2\geq 0.85.
   \]
   that is the $L^2$-norm is concentrated in the interval
   \[
     \ptq{\frac{\tau}{\bap}-\frac{1}{2 \bap}, \frac{\tau}{\bap}+\frac{1}{2 \bap}}
   \]
   if $\tau=0$, the interval must be considered as an interval in the circle:
   \[
     \ptq{0, \frac{1}{2 \bap}}\cup \ptq{1-\frac{1}{2 \bap}, 1}.
   \]

 \end{Prop}
\begin{proof}
  The proof is based on Taylor expansion and Gauss summation formula. 
  In \cite{Battisti2015} the Property is proven for the case $\alpha=1$, actually the same proof works also for
  general $\alpha\in [0,1]$.
\end{proof}


\section{DOST-wave packet frame, dimension $d=1$.}\label{sec:dim1}
In order to give a more flexible structure to the DOST basis, we generalize it to a redundant and non-orthogonal system and show that this leads to a $L^2$-frame.
First, we address the $1$-dimensional case in full generality taking into account the phase-space tiling dependent from the parameter $\alpha$ defined in the first part of the paper.
\subsection{Frame definition}         
The idea is to extend by periodicity the DOST basis and then localize them using a
suitable window function.
\begin{Def}
Consider
\begin{equation} \label{eq:DOST_sys}
    \vpt(t) =  \frac{1}{\sqrt{\ba{p}}}
                      \sum_{\eta\in\mu\Iap }
                      e^{\pii \eta \pt{t- k \frac{\nu}{\ba{p}}}}
                      \varphi\pt{t- k \frac{\nu}{\ba{p}}}, \qquad
                      (p,k) \in \Z^2,
\end{equation}
where $\Iap$  is defined in Section \ref{S:alfamod}. Then, we define the $\alpha$-DOST system 
\[
D^{\alpha}\pt{\mu,\nu, \phi} = \ptg{\vpt(t)}_{p,k}, \quad (p,k) \in \Z^2,
\]
see Figure~\ref{F: FramePic} for the plot of the frame elements. We did not plot the case $\alpha =0$ which is the standard Gabor frame element.
\end{Def}
\begin{rem}
Notice that the Fourier transform of \eqref{eq:DOST_sys} simplifies into 
\begin{align*}
    \widehat{\varphi}^{\alpha}_{p,k}(\omega)
    		& = \frac{1}{\sqrt{\ba{p}}}
                     e^{-\pii \omega k\frac{\nu}{\ba{p}}}
                      	\sum_{\eta\in\mu\Iap }
                      		\hat{\varphi}(\omega- \eta)
                				\qquad  (p,k) \in \Z^2.
\end{align*}
This kind of system of functions has affinities with the well known family of Non-Stationary Gabor frame, see \cite{MR2854065, MR3211110,MR3337493}.\\  The function 
\[
  \Gapm(\omega)=\sum_{\eta\in\mu\Iap }
                      		\hat{\varphi}(\omega- \eta)
\]
mimic a bell function of the set $\Iap$ as shown in Figure~\ref{F:FrWin}.
\end{rem}

\begin{rem}
Sometimes, when we want to highlight the dependence of the element $\Bpg(x)$ on the frame parameters, we use the notation $\Bapkmn$.	
\end{rem}

\begin{rem}
As we pointed out is Section \ref{S:alfamod}, when $\alpha =0$ the DOST basis reduces to the Fourier one. As one may expect, when we analyze the same case for the $\alpha$-DOST system above, we end up with a Gabor frame. Precisely:
\[
D^{0}\pt{\mu,\nu, \phi} = \mathcal{G}(\mu,\nu, \phi) = \ptg{T_{\nu k}M_{\mu p}\phi, \quad (p,k) \in \Z^2}.
\]
Indeed, recalling that $\bap =1$ when $\alpha=0$, $\iap = p$ we thus obtain
\[
\varphi^{0}_{p,k}(x) = 
                      \sum_{\eta = \mu p }
                      e^{\pii \eta \pt{x-\nu k}}
                      \varphi(x-\nu k) = T_{\nu k}M_{\mu p}\varphi.
\]
\end{rem}
\begin{figure}
\begin{tabular}{cc}
\subfloat[{$\alpha=0.5,~p=5$}]{\includegraphics[width=.33\linewidth]{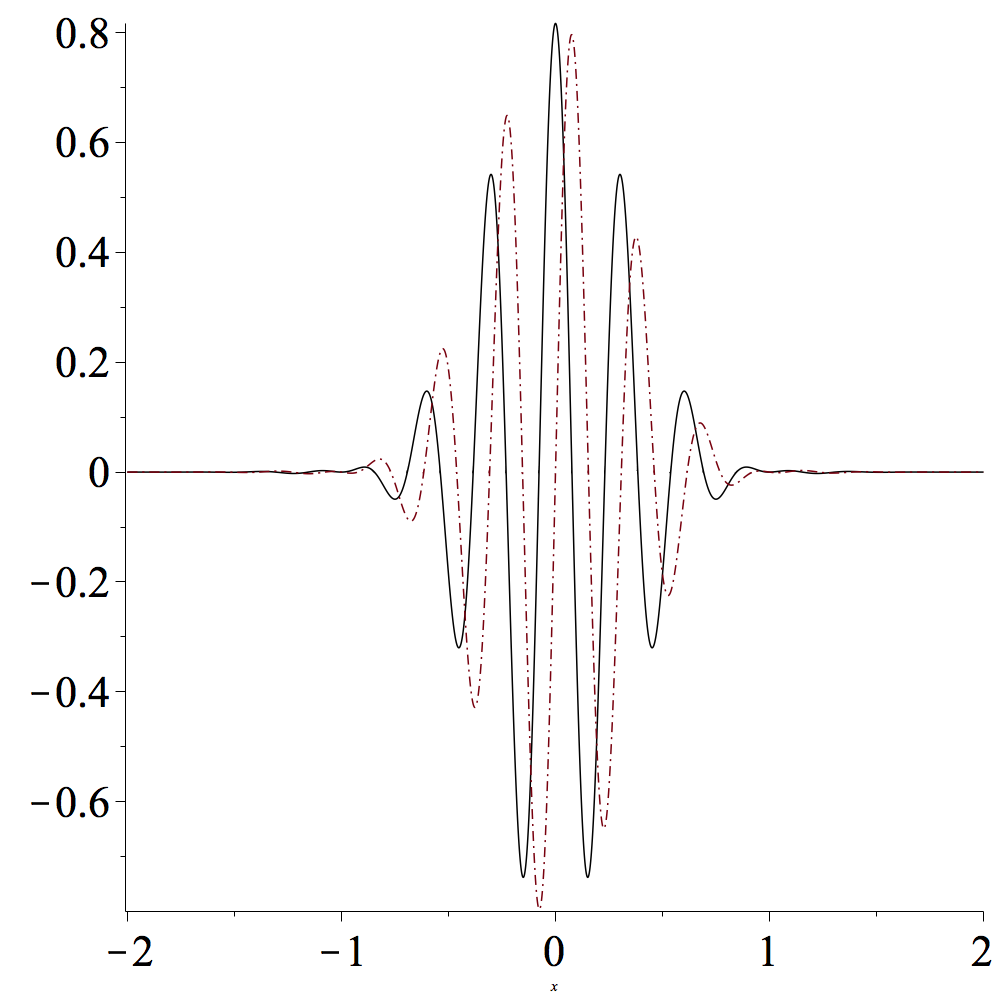}} &
\subfloat[{$\alpha=0.5,~p=7$}]{\includegraphics[width=.33\linewidth]{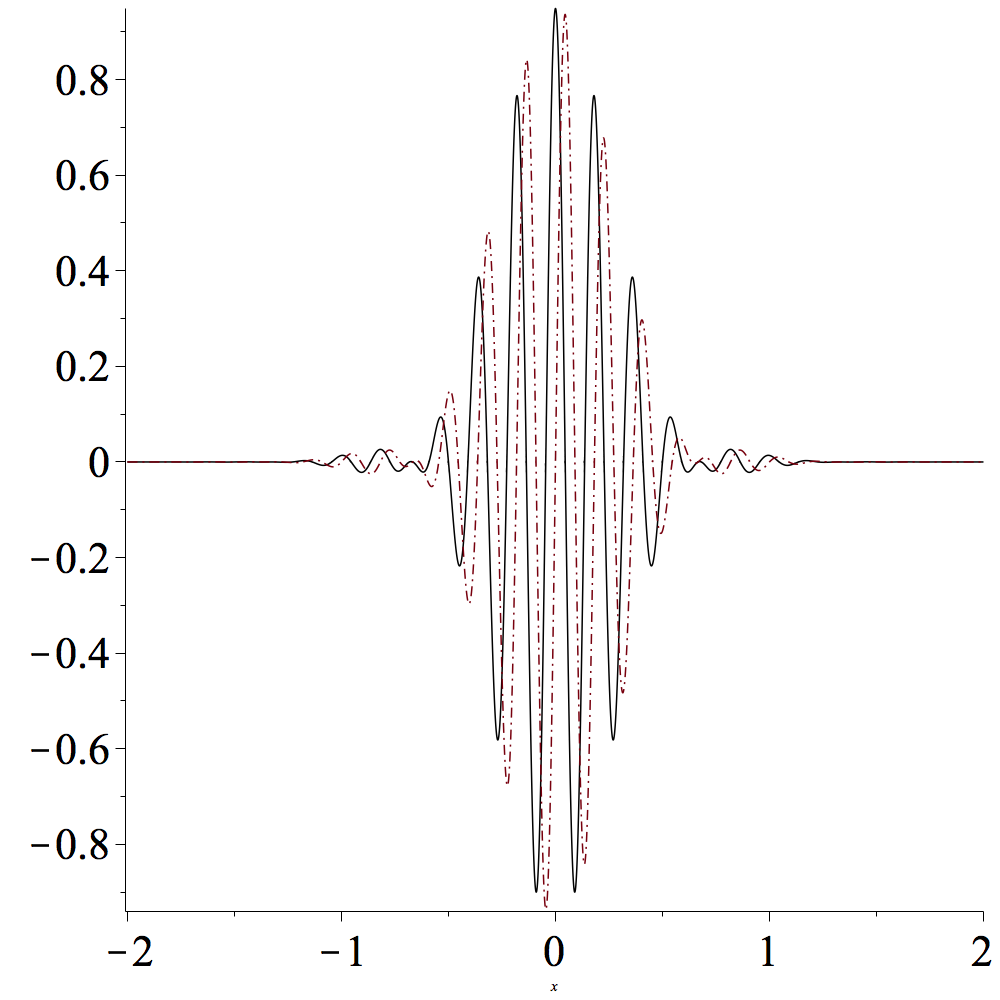}} \\
\subfloat[{$\alpha=1,~p=5$}]{\includegraphics[width=.33\linewidth]{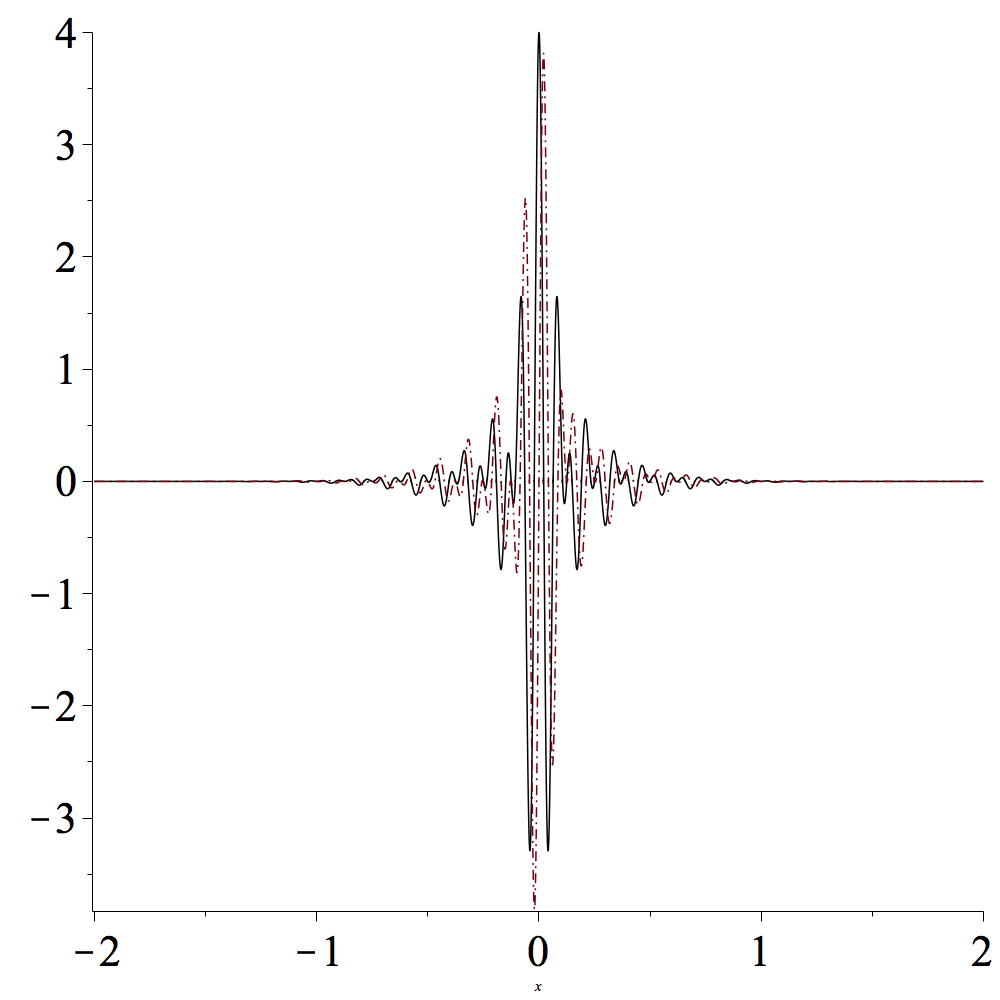}} &
\subfloat[{$\alpha=1,~p=7$}]{\includegraphics[width=.33\linewidth]{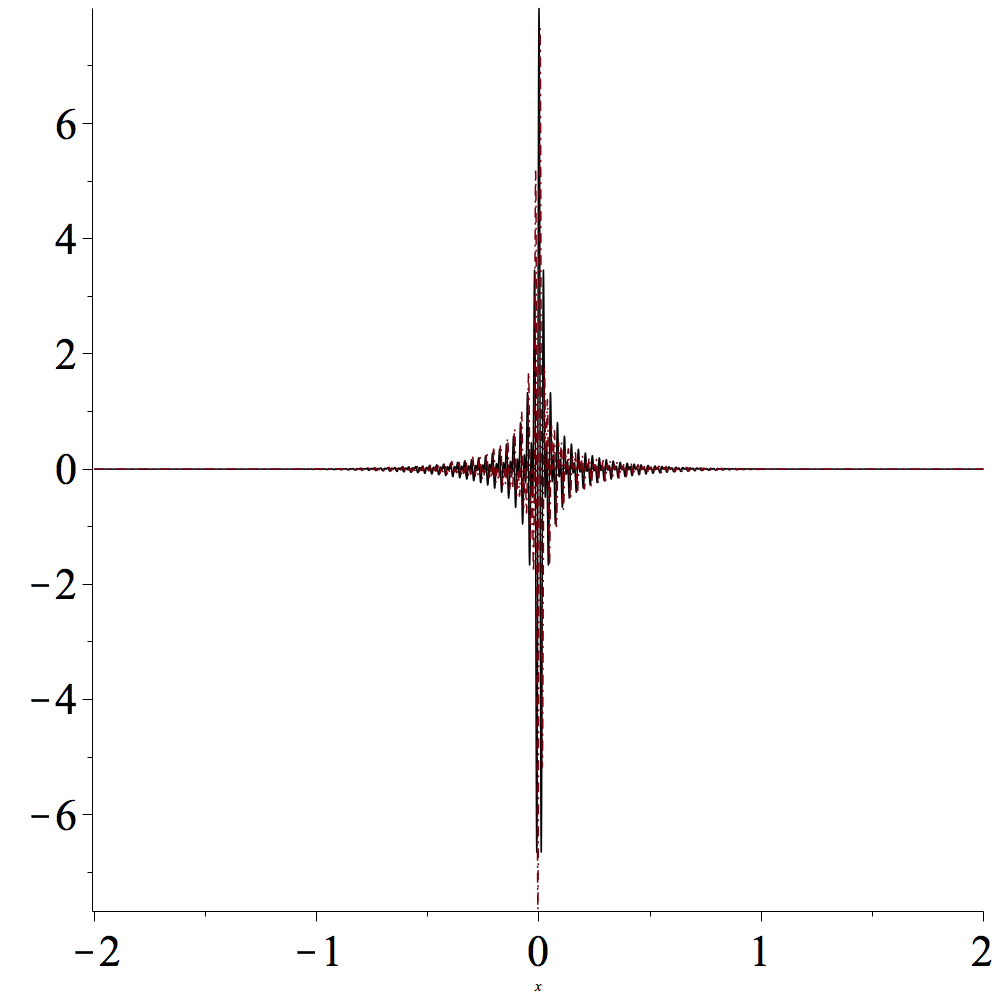}}
\end{tabular}
\caption{Plot of the elements of the $\alpha$-DOST frame in time, with a Gaussian function as a window and $\mu=1/2$. As we did for the bases, we observe that the localization increases when $\alpha$ or $p$ grows. The black line and the dotted-red one  represents the real and imaginary part respectively.}
\label{F: FramePic}		
\end{figure}
\vspace{2em}
\begin{figure}
\begin{tabular}{cc}
\subfloat[$\alpha=0.5,~p=5$]{\includegraphics[width=.33\linewidth]{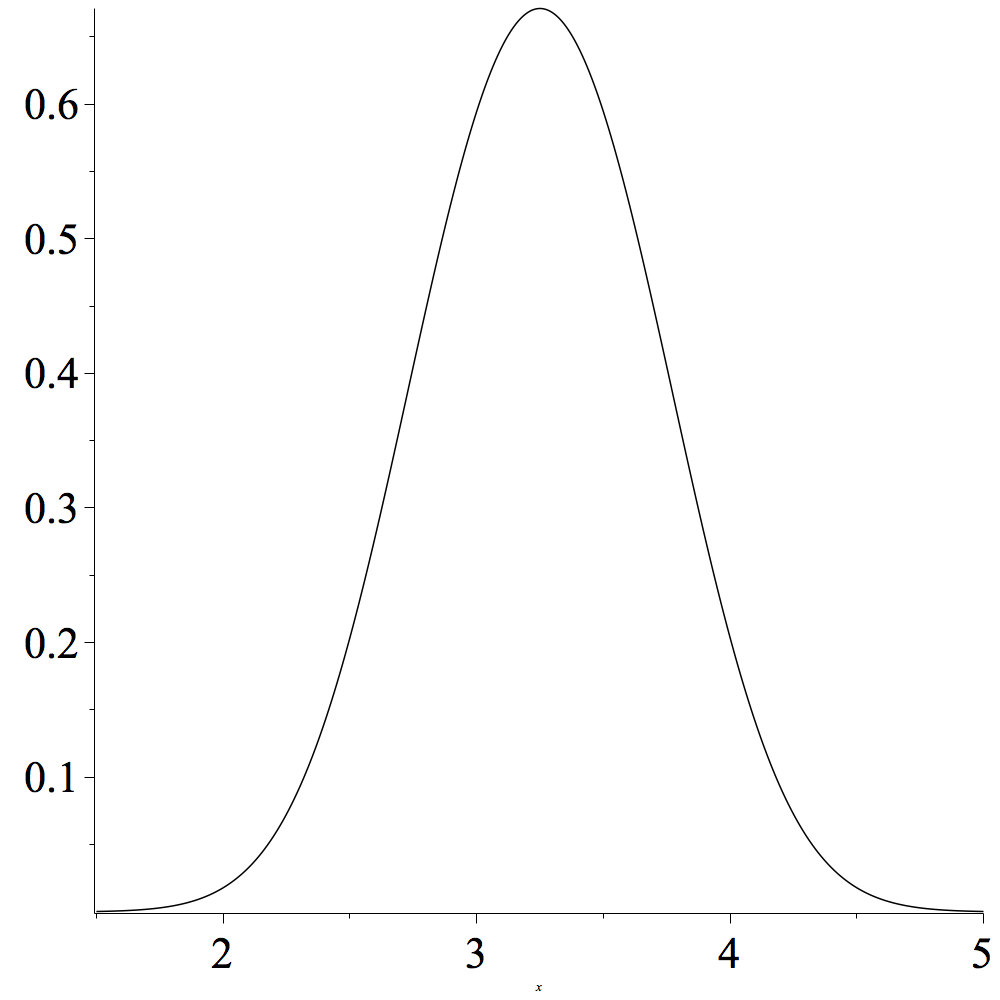}} &
\subfloat[$\alpha=0.5,~p=7$]{\includegraphics[width=.33\linewidth]{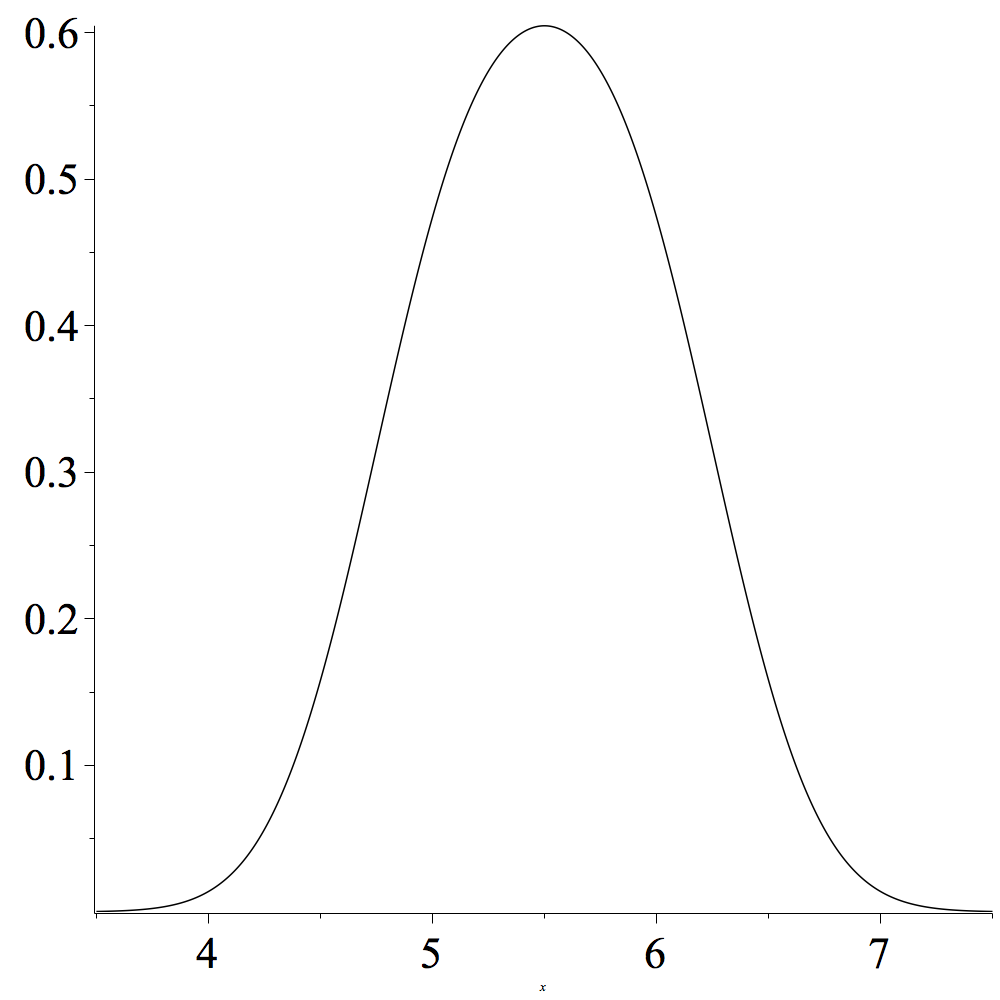}} \\
\subfloat[$\alpha=1~p=5$]{\includegraphics[width=.33\linewidth]{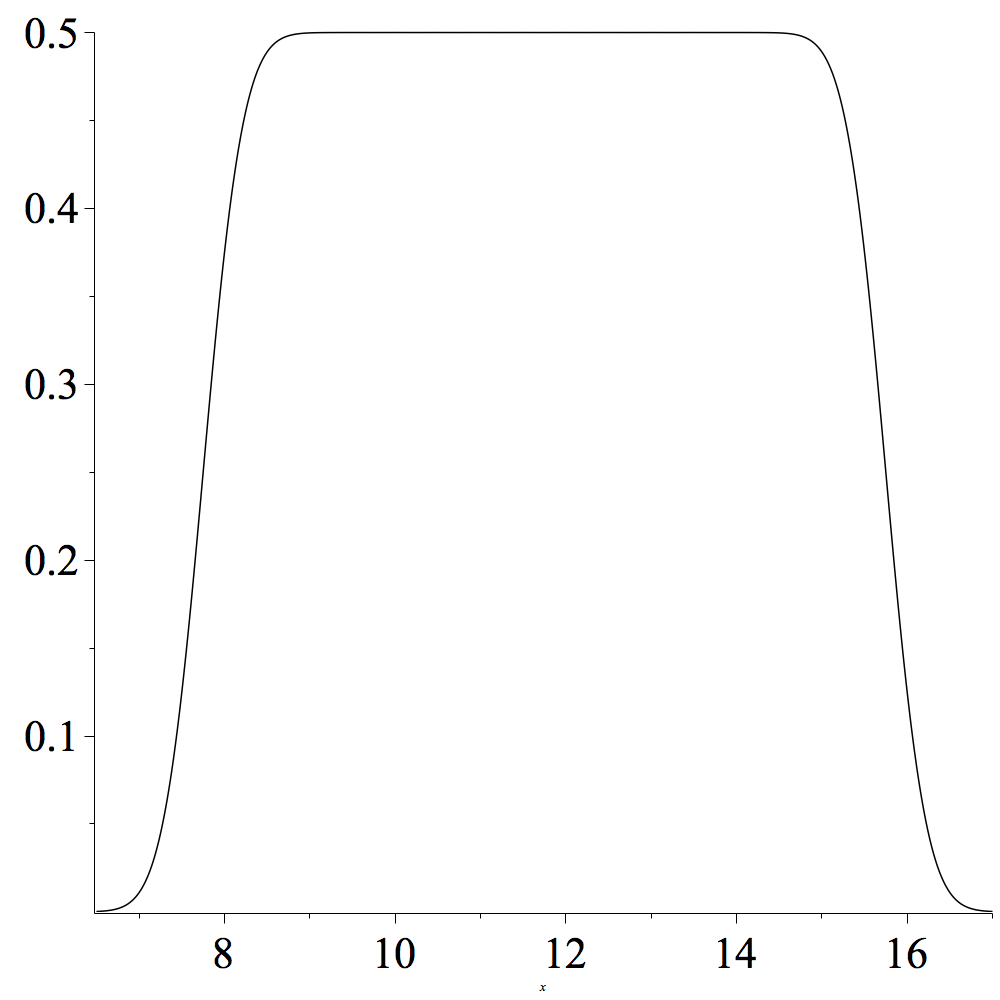}} &
\subfloat[$\alpha=1,~p=7$]{\includegraphics[width=.33\linewidth]{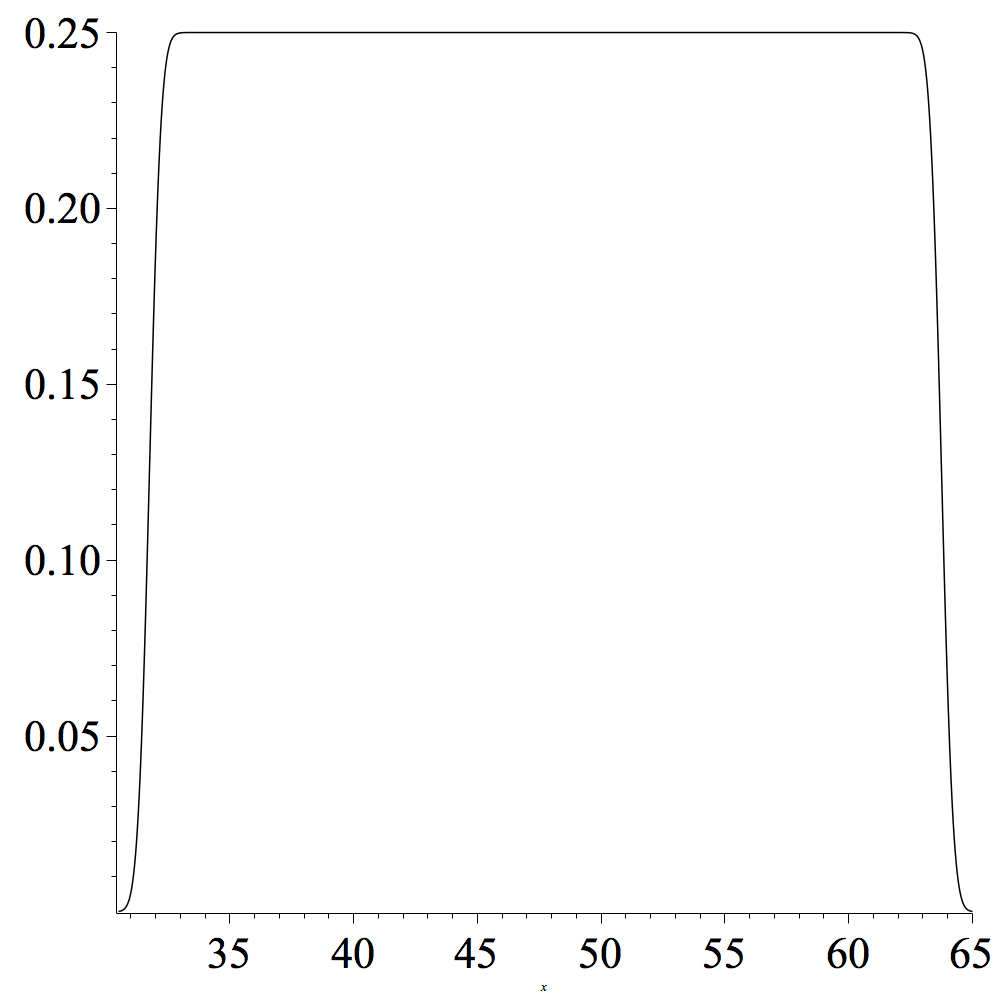}}
\end{tabular}
\caption{Plot of the elements of the DOST-frame in frequency, with a Gaussian function as a window and $\mu=1/2, \nu =1/5$. Notice that as $p$ grows, the window translates and approximates a bell function of the set $\Iap$.}
\label{F:FrWin}		
\end{figure}
We investigate the condition under which the family $D^{\alpha}\pt{\mu, \nu, \varphi}$ is a frame of $\Ll$. 
\subsection{Walnut-like Formula}
\label{subsec:Walnut}
We retrieve a Walnut-like formula for our DOST system in the spirit of Gabor analysis. This representation is a very useful tool to prove the frame property. Moreover, when $\alpha = 0$, we recover the Walnut representation for Gabor frames.\\
First, we recall the Poisson formula, \cite[Proposition 1.4.2]{GR01}.
\begin{lem}
Suppose that for some $\epsilon>0$ and $C>0$ we have $|\phi(x)| \leq C(1+|x|)^{-d-\epsilon}$ and $|\hat{\phi}(\omega)|\leq C(1+|\omega|)^{-d-\epsilon}$. Then
\begin{equation}
\label{eq:poiss}	
\gamma^d \sum_{n\in \Z^d} \phi(x+\gamma n) 
			= \sum_{n\in \Z^d}\hat{\phi}\pt{ 
											\frac{n}{\gamma}
										}
							  e^{\pii nx/\gamma},			
\end{equation}
for $\gamma>0$, where $d$ is the dimension of the base space.
\end{lem}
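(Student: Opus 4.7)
The plan is to prove the classical Poisson summation formula by periodizing $\phi$, computing the Fourier coefficients of the periodization, and then invoking pointwise Fourier inversion on the torus.

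First I would define the periodized function
\[
F(x) = \gamma^d \sum_{n\in\Z^d} \phi(x+\gamma n),
\]
and note that the decay assumption $|\phi(x)| \leq C(1+|x|)^{-d-\epsilon}$ guarantees that the series converges absolutely and uniformly on compact sets, so that $F$ is continuous and $\gamma\Z^d$-periodic. Hence $F$ may be regarded as a continuous function on the torus $\Rn/\gamma\Z^d$, and in particular lies in $L^1$ and $L^2$ of a fundamental domain.

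Next I would compute the Fourier coefficients of $F$ with respect to the character system $\{e^{-2\pi i m\cdot x/\gamma}\}_{m\in\Z^d}$ on the fundamental domain $[0,\gamma]^d$. Using the absolute convergence of the defining series, I can interchange sum and integral:
\[
\widehat{F}(m) = \frac{1}{\gamma^d}\int_{[0,\gamma]^d} F(x)\, e^{-2\pi i m\cdot x/\gamma}\,dx
              = \sum_{n\in\Z^d}\int_{[0,\gamma]^d}\phi(x+\gamma n)\,e^{-2\pi i m\cdot x/\gamma}\,dx.
\]
A change of variables $y = x+\gamma n$ in each integral, together with the periodicity $e^{-2\pi i m\cdot(y-\gamma n)/\gamma} = e^{-2\pi i m\cdot y/\gamma}$, collapses the sum of integrals into a single integral over $\Rn$, yielding $\widehat{F}(m) = \hat{\phi}(m/\gamma)$.

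Finally I would invoke Fourier inversion on the torus. The decay assumption $|\hat{\phi}(\omega)|\le C(1+|\omega|)^{-d-\epsilon}$ ensures that the Fourier series $\sum_{m\in\Z^d} \hat{\phi}(m/\gamma)\,e^{2\pi i m\cdot x/\gamma}$ converges absolutely and uniformly, so it represents a continuous function equal to $F(x)$ pointwise. This is precisely \eqref{eq:poiss}. The only delicate point is securing genuine pointwise (not merely $L^2$) equality; this is exactly why both decay hypotheses are imposed, each one providing absolute convergence on one side of the identity.
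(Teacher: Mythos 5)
Your argument is correct and is the standard periodization proof of the Poisson summation formula; the paper itself does not prove this lemma but simply quotes it from \cite[Proposition 1.4.2]{GR01}, and your proof is essentially the one found in that reference. The only point worth flagging is that the pointwise identity requires $\phi$ to be (identified with) a continuous function: the hypothesis $|\hat{\phi}(\omega)|\leq C(1+|\omega|)^{-d-\epsilon}$ makes $\hat{\phi}$ integrable, so $\phi$ agrees a.e.\ with a continuous representative, and it is for that representative that the uniform convergence of $\sum_{n}\phi(x+\gamma n)$ yields continuity of $F$ and hence the everywhere (rather than a.e.) equality you invoke at the end.
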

We can state the main result of this section.
\begin{lem}\label{L:2}
Let $\phi,\psi \in L^2(\R)$, $\hat{f}\in C^{\infty}_c(\R)$, and
\begin{align}
  \label{eq:direct}
  \pt{S^{\alpha;\nu;\mu}_{\varphi,\psi} f} (t) = \sum_{p,k}\langle f,\Bapkmn \rangle \PBapkmn(t), \quad t\in \R, \;
  (p,k) \in \Z^2.  
\end{align}
Then 
\begin{equation}
  \label{eq:Walnut}
  \F_{t\to \omega} \pt{\PFamnfg f} (\omega)	=\nu^{-1} \sum_{p\in\Z}\sum_{k\in \Z}T_{k\frac{\bap}{\nu}}\pt{\hat{f}\pt{\omega} }\overline{\Gapm}\pt{\omega-k\frac{\ba{p}}{\nu}}\PGapm(\omega), 
\end{equation}
where 
\[
\PGapm(\omega) = \sum_{\eta\in\mu\Iap }
                      		\hat{\psi}(\omega- \eta).
\]
\end{lem}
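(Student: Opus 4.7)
The plan is to reduce everything to the Fourier side and then apply Poisson summation. Since $\hat{f} \in C_c^\infty(\R)$, every sum that will appear is locally finite in the frequency variable $\omega'$, which legitimizes the interchanges of sum and integral. Throughout I assume $\varphi,\psi$ satisfy the decay hypothesis of the preceding Poisson lemma so that equation \eqref{eq:poiss} is applicable.

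First I would compute the Fourier transform of a single frame element. Writing $\Bapkmn = \ba{p}^{-1/2}\, T_{k\nu/\ba{p}}\bigl(\sum_{\eta\in\mu\Iap} M_\eta \varphi\bigr)$ and using the standard identities $\widehat{T_a g}(\omega)=e^{-2\pi i a\omega}\hat g(\omega)$ and $\widehat{M_\eta g}(\omega)=\hat g(\omega-\eta)$, one immediately obtains
\[
   \widehat{\Bapkmn}(\omega)=\ba{p}^{-1/2}\,e^{-2\pi i k\nu\omega/\ba{p}}\,\Gapm(\omega),
\]
and analogously for $\widehat{\PBapkmn}$ with $\PGapm$ in place of $\Gapm$. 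Parseval then gives
\[
   \langle f,\Bapkmn\rangle=\ba{p}^{-1/2}\int \hat{f}(\omega')\,\overline{\Gapm(\omega')}\,e^{2\pi i k\nu\omega'/\ba{p}}\,d\omega'.
\]

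Next I would substitute these two ingredients into the definition \eqref{eq:direct} of $\PFamnfg f$, take the Fourier transform of the result, and reorganize:
\[
   \F(\PFamnfg f)(\omega)=\sum_{p\in\Z}\frac{\PGapm(\omega)}{\ba{p}}\int \hat{f}(\omega')\,\overline{\Gapm(\omega')}\,\Bigl(\sum_{k\in\Z} e^{2\pi i k \nu(\omega'-\omega)/\ba{p}}\Bigr)\,d\omega'.
\]
Here the key step is recognizing the inner sum as the pull-back of a Dirac comb: by Poisson summation \eqref{eq:poiss} one has, as a tempered distribution in $\omega'$,
\[
   \sum_{k\in\Z} e^{2\pi i k\nu(\omega'-\omega)/\ba{p}} \;=\; \frac{\ba{p}}{\nu}\sum_{k\in\Z}\updelta\!\Bigl(\omega'-\omega-k\,\tfrac{\ba{p}}{\nu}\Bigr).
\]
Pairing this comb against $\hat{f}(\omega')\overline{\Gapm(\omega')}$, the $\ba{p}$ factors cancel and I obtain
\[
   \F(\PFamnfg f)(\omega)=\frac{1}{\nu}\sum_{p\in\Z}\PGapm(\omega)\sum_{k\in\Z}\hat{f}\!\bigl(\omega+k\tfrac{\ba{p}}{\nu}\bigr)\,\overline{\Gapm}\bigl(\omega+k\tfrac{\ba{p}}{\nu}\bigr).
\]
The final cosmetic step is to send $k\mapsto -k$ in each inner sum, which turns $\hat{f}(\omega+k\ba{p}/\nu)$ into $T_{k\ba{p}/\nu}\hat f(\omega)$ and similarly produces $\overline{\Gapm}(\omega-k\ba{p}/\nu)$, giving precisely \eqref{eq:Walnut}.

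The main technical obstacle is justifying the termwise manipulations: interchanging the $p$-sum, the $k$-sum and the $\omega'$-integral, and then invoking Poisson summation on the resulting Dirac-comb identity. All of this is however controlled by the compact support of $\hat{f}$ (which forces the $k$-sum to be finite for each $p$) together with the decay assumptions on $\varphi,\psi$ that let us apply \eqref{eq:poiss}; the remaining $p$-sum is absolutely convergent since only finitely many $\Iap$ intersect any given compact frequency set. Once these technicalities are in place, the identity \eqref{eq:Walnut} follows directly from the Poisson-summation step above.
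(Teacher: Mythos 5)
Your proposal is correct and follows essentially the same route as the paper: compute $\widehat{\Bapkmn}$ and $\langle f,\Bapkmn\rangle$ via Parseval, reorganize the double sum, and apply the Poisson formula \eqref{eq:poiss} with $\gamma=\bap/\nu$ to the function $\hat f\,\overline{\Gapm}$. The only difference is cosmetic: you phrase the Poisson step through the Dirac-comb identity for $\sum_k e^{2\pi i k\nu(\omega'-\omega)/\bap}$, while the paper recognizes the $k$-sum directly as the Fourier series of the periodization of $\hat f\,\overline{\Gapm}$ and invokes \eqref{eq:poiss} in that functional form.
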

\begin{proof}
Take the Fourier transform of the frame operator and obtain
\begin{align*}
  \F_{t\to \omega} \pt{\PFamnfg f}(\omega) &{}=  \sum_{p,k\in\Z^2} \langle \hat{f},\widehat{\Bapkmn}\rangle \widehat{\PBapkmn}(\omega)
  \\
  &{}=  \sum_{p,k\in\Z^2} 
 			\langle\hat{f}(\cdot), 
 				\frac{1}{\sqrt{\bap}}e^{-\pii k \frac{\nu}{\bap} (\cdot)}\Gapm(\cdot)
 			\rangle
 				\frac{1}{\sqrt{\bap}}
 					e^{-\pii k \frac{\nu}{\bap}\omega}
 						\PGapm(\omega)
\\
 &{}= \sum_{p\in\Z}\pt{
 						\sum_{k\in \Z}
 							\langle \hat{f}(\cdot),\frac{1}{\sqrt{\bap}}
 								e^{\pii k \frac{\nu}{\bap}(\cdot)}\Gapm(\cdot)
 			                \rangle 
 			             \frac{1}{\sqrt{\ba{p}}}
 							e^{\pii k \frac{\nu}{\bap} \omega}
 					  }\PGapm(\omega) 
\\
&{}= \nu^{-1}\sum_{p\in\Z}\pt{
 						\frac{\nu}{{\bap}} \sum_{k\in \Z}
 							\langle \hat{f}(\cdot),
 								e^{\pii k \frac{\nu}{\bap}(\cdot)}\Gapm(\cdot)
 			                \rangle 
 							e^{\pii k \frac{\nu}{\bap} \omega }
 					  }\PGapm(\omega)  					  
\end{align*}
Then, \eqref{eq:poiss} with $\phi = \hat{f}\overline{\Gapm}$, $n = k$ and $\gamma = \frac{\bap}{\nu}$ yields
\begin{align*}
F_{x\to \omega}\pt{\Famnfg f}(\omega) 
	&{}= \nu^{-1}\sum_{p \in \Z}\sum_{k \in \Z} \pt{\hat{f}\,\overline{\Gapm}}\pt{\omega-k\frac{\bap}{\nu}}\PGapm(\omega) 	
\\
	&{}= \nu^{-1}\sum_{p \in \Z}\sum_{k \in \Z} \hat{f}\pt{\omega-k\frac{\bap}{\nu}}\overline{\Gapm}\pt{\omega-k\frac{\ba{p}}{\nu}}\PGapm(\omega) 	
\\
	&{}= \nu^{-1}\sum_{p \in \Z}\sum_{k \in \Z} T_{k\frac{\bap}{\nu}}\pt{\hat{f}\pt{\omega} }\overline{\Gapm}\pt{\omega-k\frac{\ba{p}}{\nu}}\PGapm(\omega), 	
\end{align*}
as desired.
\end{proof}

\begin{rem}
  We stress that the equalities \eqref{eq:direct} and \eqref{eq:Walnut} are, at this stage, formal. 
  The frame property we prove in Theorem \ref{thm:3} implies that, 
  under suitable assumption on the function $\phi$, $S_{\phi,\phi}^{\alpha;\mu, \nu}$ is a linear
  operator from $L^{2}(\R)$ to itself, therefore \eqref{eq:direct} and \eqref{eq:Walnut} are equalities in 
  $L^2(\R)$. 
\end{rem}

\subsection{Painless frame expansion}
In order to have a clear scheme of the frame construction we first consider the painless case.
\begin{Def}\label{d:adm}
Let $\varphi\in \S(\R)$,  such that $\mathrm{supp\,} \hat{\varphi}\subset [-L,L]$ and $0\leq \hat{\varphi}\leq 1$. Given $\mu \in \R,\:\mu >0$, we say that $(\varphi,\mu)$ is admissible if

\begin{equation}
\label{eq:G}
\Gapm(\omega) = \sum_{\eta \in \mu\Iap} \hat{\varphi}(\omega -\eta)
\end{equation}
satisfies the following properties
\normalfont{
    \begin{enumerate}
        \item[1.] $ 0\leq \Gapm(\omega)  \leq C_1$, for some $C_1>0$.
        
        \item[2.] For every $\omega \in \Rr$  $|\{p:\Gapm(\omega)>0\}| \leq C_2$.
        \item[3.] There exists $C_3 >0$ such that  for every $\omega \in \Rr^d, \; \Gapm(\omega) > C_3$, for some $p\in \Z$. 
    \end{enumerate}
}
\end{Def}

Using this definition we can immediately obtain some important properties of $\Gapm$.
\begin{lem}\label{L:3}
Let $(\varphi,\mu)$ be admissible, then 
\begin{align}
	\abs{\supp \Gapm } &\leq \pt{2L+\mu}\; \bap, \quad \forall p\in\Z,  \label{eq:support}
	\\
	 C_3^2 \leq &\sum_{p\in \Z}|\Gapm(\omega)|^2 \leq C_1^2C_2,  \quad \forall \omega \in \R, \label{eq:up_low_bund}
\end{align}
where $C_1,C_2,C_3$ are defined above (cf. Definition \ref{d:adm}).
\end{lem}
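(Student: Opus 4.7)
The three inequalities are quite direct once one unpacks the definition of $\Gapm$ together with the admissibility conditions, so the plan is mainly bookkeeping rather than a deep argument.

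For the support bound \eqref{eq:support}, I would start from the fact that $\hat\varphi$ is supported in $[-L,L]$, so each summand $\hat\varphi(\omega-\eta)$ in the definition of $\Gapm$ is supported in $[\eta-L,\eta+L]$. Since $\mu\Iap=\{\mu\iap,\mu(\iap+1),\dots,\mu(\iap+\bap-1)\}$ is a set of $\bap$ equispaced points at distance $\mu$, the union of the translates is contained in the interval $[\mu\iap-L,\mu(\iap+\bap-1)+L]$, whose length is $\mu(\bap-1)+2L\le \mu\bap+2L\le(\mu+2L)\bap$, using $\bap\ge 1$ in the last step.

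For the upper bound in \eqref{eq:up_low_bund}, I would invoke properties $1$ and $2$ in Definition \ref{d:adm}. Property $1$ yields $|\Gapm(\omega)|^2\le C_1^2$ pointwise, and property $2$ says that, at any fixed $\omega$, there are at most $C_2$ indices $p$ for which $\Gapm(\omega)\ne 0$. Summing only over these contributing indices gives $\sum_p|\Gapm(\omega)|^2\le C_1^2 C_2$.

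For the lower bound, the key is that all summands $|\Gapm(\omega)|^2$ are non-negative, so the sum is bounded below by any individual term. Property $3$ in Definition \ref{d:adm} guarantees, for each $\omega$, the existence of at least one index $p_0=p_0(\omega)$ with $\Gapm[p_0](\omega)>C_3$, hence $\sum_p|\Gapm(\omega)|^2\ge |\Phi^{\mu}_{p_0;\alpha}(\omega)|^2>C_3^2$. I do not foresee any real obstacle here, as all three claims follow by elementary estimates; the only minor care is in the support step, to ensure the multiplicative $\bap$ factor absorbs the additive $\mu$ and $2L$ terms.
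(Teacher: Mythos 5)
Your proposal is correct and follows essentially the same route as the paper: the support of $\Gapm$ is bounded by locating the translated supports $[\eta-L,\eta+L]$ for $\eta\in\mu\Iap$ and using $\bap\geq 1$ to absorb the additive $2L$ into the multiplicative factor, while the two bounds in \eqref{eq:up_low_bund} follow directly from properties 1--3 of admissibility exactly as you describe. No gaps.
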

\begin{proof}
The first property comes directly from the definition of $\Gapm$ (cf. \eqref{eq:G}) and the fact that $\supp \varphi \subset \ptq{-L,L}$. 
First, we observe that  $\omega - \eta \in \supp \hat{\varphi}$ if $-L \leq \omega - \eta \leq L$; thus, writing $\eta = \mu\pt{ \iap + t}$, this translates as
\[
\mu\iap-L \leq \omega \leq \mu\sap +L.
\]
Hence 
\[
\abs{\supp \Gapm } \leq \mu\pt{\sap-\iap} + 2L = \mu\bap + 2L.
\]
Since $L$ is fixed and $\bap \geq 1$,   uniformly in $p$, we can write
\[
\abs{\supp \Gapm }  \leq (\mu+2L)\bap,\]
as desired.

Given $\omega \in \Rr$, we know that there are at most $C_2$ indexes for which 
$\Gapm(\omega)>0$. Moreover, $\abs{\Gapm}\leq C_1$, thus the upper bound of \eqref{eq:up_low_bund} holds true.\medskip\\
On the other hand, for the same $\omega$, there exists at least one $p$ such that $\Gapm(\omega)\geq C_3$.
\end{proof}

\begin{rem}
An explicit example of an admissible function is the compact version of a Gaussian; precisely $\hat{\varphi}(\omega) = \chi_{\epsilon}(\omega) \frac{1}{\sqrt{2}}e^{-\pi{\omega}^2}$, where
$\chi_{\epsilon}$ is a bounded smooth function such that 
\[
\chi_{\epsilon}(\omega) = \begin{cases}
	1, \qquad \omega \in \ptq{-1,1},
	\\
	0,  \qquad \omega \in \ptq{-1-\epsilon,1+\epsilon}.
\end{cases}
\]
In \cite{MR2728710}, similar functions are considered.
\end{rem}

\begin{thm}
\label{thm:3}
Consider $(\phi,\mu)$ being admissible. Then, there exist positive lattice parameters $\nu,\mu$  such that the $\alpha$-DOST system (cf. \eqref{eq:DOST_sys})
\[
D^{\alpha}\pt{\mu,\nu, \phi} = \ptg{\Bpg(x)}_{(p,k) \in \;\Z^2}
\]
is a frame for $\Ll$. Precisely, there exist $A,B>0$ such that for all $f\in \Ll$
\begin{equation}
\label{eq:Frame_P}
A\|f\|^2_2 \leq \sum_{p,k}\vert\langle f,\Bpg(x)\rangle\vert^2 \leq B\|f\|^2_2.
\end{equation}
\end{thm}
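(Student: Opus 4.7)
The strategy is to mimic the classical painless nonorthogonal expansion argument used for Gabor frames, exploiting the Walnut-like formula from Lemma \ref{L:2}. Setting $\psi=\phi$ in that lemma gives, for $f$ with $\hat f\in C_c^\infty(\R)$,
\begin{equation*}
  \F\bigl(\PFamnfg f\bigr)(\omega)
  = \nu^{-1}\sum_{p\in\Z}\sum_{k\in\Z}
      \hat f\!\pt{\omega-k\tfrac{\bap}{\nu}}\,
      \overline{\Gapm}\!\pt{\omega-k\tfrac{\bap}{\nu}}\,
      \Gapm(\omega).
\end{equation*}
The plan is to first choose $\nu$ small enough to kill every term with $k\neq 0$, and then to read off the frame bounds via Plancherel.

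First I would choose $\nu$. By Lemma \ref{L:3}, $\supp\Gapm$ has length at most $(2L+\mu)\bap$ uniformly in $p$. Therefore, if we impose
\begin{equation*}
  0<\nu\leq \frac{1}{2L+\mu},
\end{equation*}
then, for every $p$, the translates $T_{k\bap/\nu}\,\Gapm$ with $k\neq 0$ have support disjoint from that of $\Gapm$, because consecutive translates are spaced by $\bap/\nu\geq (2L+\mu)\bap$. The crucial observation is that this threshold on $\nu$ is \emph{independent of $p$}, which is exactly what allows the painless argument to work uniformly across the $\alpha$-partitioning. Under this choice only the $k=0$ summand survives, so
\begin{equation*}
  \F\bigl(\PFamnfg f\bigr)(\omega)
  = \nu^{-1}\,\hat f(\omega)\sum_{p\in\Z}|\Gapm(\omega)|^2.
\end{equation*}

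Next, I would compute $\langle \PFamnfg f,f\rangle=\sum_{p,k}|\langle f,\Bapkmn\rangle|^2$ through Plancherel:
\begin{equation*}
  \sum_{p,k}|\langle f,\Bapkmn\rangle|^2
  = \nu^{-1}\int_{\R} |\hat f(\omega)|^2 \sum_{p\in\Z}|\Gapm(\omega)|^2\,d\omega.
\end{equation*}
Inserting the two-sided estimate from Lemma \ref{L:3},
\begin{equation*}
  C_3^2 \leq \sum_{p\in\Z}|\Gapm(\omega)|^2 \leq C_1^2C_2,
\end{equation*}
and applying Plancherel once more to $\hat f$ yields
\begin{equation*}
  \frac{C_3^2}{\nu}\,\|f\|_2^2
  \leq \sum_{p,k}|\langle f,\Bapkmn\rangle|^2
  \leq \frac{C_1^2 C_2}{\nu}\,\|f\|_2^2,
\end{equation*}
which is exactly \eqref{eq:Frame_P} with $A=C_3^2/\nu$ and $B=C_1^2C_2/\nu$. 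A density argument (the class of $f$ with $\hat f\in C_c^\infty(\R)$ is dense in $L^2$, and the upper bound shows $\PFamnfg$ extends to a bounded operator on $L^2$) closes the proof.

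The main obstacle I anticipate is the uniform-in-$p$ painless condition: one has to be sure that the bound on $|\supp\Gapm|$ scales proportionally to $\bap$ (not faster), so that the spacing $\bap/\nu$ keeps pace with the widening window as $p$ grows. That is precisely the content of \eqref{eq:support} in Lemma \ref{L:3}, and it is the point where the $\alpha$-partitioning structure pays off — without it, no single $\nu$ would work across all scales. Everything else is a standard application of Parseval combined with the pointwise bounds \eqref{eq:up_low_bund}.
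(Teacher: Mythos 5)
Your proposal is correct and follows essentially the same route as the paper: both rest on the Walnut-like formula of Lemma \ref{L:2}, kill the $k\neq 0$ terms by the uniform support bound $\abs{\supp\Gapm}\leq(2L+\mu)\bap$ from Lemma \ref{L:3} once $\nu<(2L+\mu)^{-1}$, and then read off the bounds from $C_3^2\leq\sum_p\abs{\Gapm}^2\leq C_1^2C_2$ via Plancherel. Your version is slightly more explicit about the resulting constants $A=C_3^2/\nu$, $B=C_1^2C_2/\nu$ and about the final density argument, but the substance is identical.
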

\begin{proof}
The frame property \eqref{eq:Frame_P} is  equivalent to
\[  
A\|f\|^2_2 \leq |\langle S^{\mu,\nu}_{\varphi,\varphi}f,f\rangle| \leq B\|f\|^2_2.
\]
Hence, we evaluate
\[
\Lprod{\Famnfg f} {f}=\Lprod{F(\Famnfg f)} {\hat{f}},
\]
where $F$ is the Fourier transform as above. The Walnut representation formula \eqref{eq:Walnut} yields
 \begin{align}
   \Lprod{\F\pt{\Famnfg f}} {\hat{f}}&=\frac{1}{\nu} \Lprod{\sum_{p\in\Z}\sum_{k\in \Z}T_{\frac{k}{\nu}\bap}\hat{f}\: 
   {T_{\frac{k}{\nu}\bap}\overline{\Gapm} }\Gapm}{\hat{f}}\nonumber\\
   &=\frac{1}{\nu}  \Lprod{\sum_{p\in\Z}\hat{f}\; \overline{\Gapm}\Gapm}{\hat{f}}\nonumber\\
   &\qquad+\frac{1}{\nu} 
   \Lprod{\sum_{p\in\Z}\sum_{k \in \Z\setminus \ptg{0}}T_{\frac{k}{\nu}\bap}\hat{f} \:
   {T_{\frac{k}{\nu}\bap}\overline{\Gapm}}\Gapm}{\hat{f}}.\label{eq:sepk}
    \end{align}
We notice that by \eqref{eq:up_low_bund}
\[
\frac{1}{\nu}  \Lprod{\sum_{p\in\Z}\hat{f}\; \overline{\Gapm}\Gapm}{\hat{f}} \asymp \|f\|^2_2.
\]
Indeed,
\[
\frac{1}{\nu}\Lprod{ \sum_{p\in\Z} \abs{\Gapm}^2\hat{f}}{\hat{f}} \asymp \Lprod{\hat{f}}{\hat{f}} = \|f\|^2_2.
\]
We claim that for $\nu$ small enough, 
\[
\frac{1}{\nu} 
   \Lprod{\sum_{p\in\Z}\sum_{k \in \Z\setminus \ptg{0}}T_{\frac{k}{\nu}\bap}\hat{f} \:T_{\frac{k}{\nu}\bap}\overline{\Gapm}\Gapm}{\hat{f}} = 0.
\]
Recall that $|\supp \Gapm (\omega)| \leq (2L+\mu) \bap$ (cf. \eqref{eq:support}), thus for $\nu < (2L+\mu)^{-1}$ and $k\neq 0$
\[
\supp T_{\frac{k}{\nu}\bap}\overline{\Gapm} \cap \supp \Gapm = \emptyset,
\]
as desired.

\end{proof}
\subsection{Conjugate Filter}
We can define a way to represent functions even without a canonical dual frame; namely, we construct a conjugate filter for the window function $\varphi$. This technique is a powerful tool for numerical implementations, see \cite{MR2728710}.\\
Set

\begin{equation}\label{eq:filter}
\Omega^{\mu;\nu}_{p;\alpha} (\omega) = \nu\frac{\overline{\Gapm(\omega)}}{\displaystyle\sum_{p \in \Z}
                      \abs{\Gapm(\omega)}^2 },
\end{equation}

then, the product between $\Omega^{\mu;\nu}_{p;\alpha}$ and  $\Gapm$ forms almost a partition of unity 
\begin{align}
  \label{part:un}
  \sum_{p\in \Z}\Omega^{\mu;\nu}_{p;\alpha}(\omega)\Gapm(\omega) = \nu.
\end{align}
We have the following corollary:
\begin{cor} \label{C:1}
Consider the functions $\Omega^{\alpha;\nu}_{p;\mu}$ defined in \eqref{eq:filter}, then set
\[
\widehat{\Psi}^{\alpha}_{p,k}(\omega)= \frac{1}{\sqrt{\ba{p}}}e^{-\pii\omega
                        k\frac{\nu}{\bap}
                        }
                        \overline{\Omega^{\alpha;\nu}_{p;\mu}}(\omega)
\]
Then, for any $f\in \Ll$
\begin{equation}
\label{eq:rec1}	
f(t) = \sum_{p,k} \langle f, \Psi^{\alpha}_{p,k}\rangle \Bpg(t) .
\end{equation}
\end{cor}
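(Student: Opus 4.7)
The strategy is to mimic the Walnut-type derivation of Lemma \ref{L:2}, but with the roles of analysis and synthesis windows swapped: the conjugate filter $\Psi^{\alpha}_{p,k}$ is used for analysis and $\Bpg$ for synthesis. Denote by $\widetilde S f(t):=\sum_{p,k}\langle f,\Psi^{\alpha}_{p,k}\rangle\,\Bpg(t)$ the right-hand side of \eqref{eq:rec1}. The plan is to establish that $\F(\widetilde Sf)=\hat f$ pointwise, from which \eqref{eq:rec1} follows in $L^2(\R)$.

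First, I would repeat the manipulations in the proof of Lemma \ref{L:2} verbatim, the only change being that $\overline{\Gapm}$ in the analysis step gets replaced by $\Omega^{\mu;\nu}_{p;\alpha}$, since the complex conjugate appearing in $\widehat{\Psi^{\alpha}_{p,k}}$ cancels the conjugation coming from the $L^2$ inner product. Applying the Poisson summation formula \eqref{eq:poiss} to $\phi=\hat f\cdot\Omega^{\mu;\nu}_{p;\alpha}$ with $\gamma=\bap/\nu$ then produces the Walnut-like identity
\[
\F(\widetilde Sf)(\omega)=\frac{1}{\nu}\sum_{p\in\Z}\Gapm(\omega)\sum_{k\in\Z}T_{k\frac{\bap}{\nu}}\hat f(\omega)\,T_{k\frac{\bap}{\nu}}\Omega^{\mu;\nu}_{p;\alpha}(\omega).
\]
The derivation follows Lemma \ref{L:2} line by line, since only the explicit Fourier transforms of the two building-block families enter the Poisson step.

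Next, I would invoke the painless support argument from the proof of Theorem \ref{thm:3}. Condition (3) of Definition \ref{d:adm} gives $\sum_{p}|\Gapm(\omega)|^2\geq C_3^2>0$ uniformly in $\omega$, so $\supp\Omega^{\mu;\nu}_{p;\alpha}=\supp\Gapm$, and by \eqref{eq:support} both are contained in intervals of length at most $(2L+\mu)\bap$. Choosing $\nu<(2L+\mu)^{-1}$ as in Theorem \ref{thm:3}, the sets $\supp T_{k\bap/\nu}\Omega^{\mu;\nu}_{p;\alpha}$ and $\supp\Gapm$ are disjoint for every $k\neq 0$, so only the $k=0$ term survives:
\[
\F(\widetilde Sf)(\omega)=\frac{1}{\nu}\hat f(\omega)\sum_{p\in\Z}\Omega^{\mu;\nu}_{p;\alpha}(\omega)\,\Gapm(\omega).
\]
The partition-of-unity identity \eqref{part:un} collapses the $p$-sum to the constant $\nu$, yielding $\F(\widetilde Sf)(\omega)=\hat f(\omega)$, which is precisely \eqref{eq:rec1} on the Fourier side.

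The main technical obstacle is justifying the convergence of the double series and the exchange of the Fourier transform with the summation. I would carry this out first on the dense subspace $\{f:\hat f\in C^{\infty}_c(\R)\}$, where all manipulations above are absolutely convergent exactly as in the proof of Lemma \ref{L:2}. The extension to arbitrary $f\in L^2(\R)$ would then follow by density, once one verifies that $\widetilde S$ extends to a bounded operator on $L^2(\R)$; this boundedness is a standard consequence of the Bessel property of the two families $\{\Bpg\}$ and $\{\Psi^{\alpha}_{p,k}\}$, the former being provided by Theorem \ref{thm:3} and the latter by the boundedness of $\Omega^{\mu;\nu}_{p;\alpha}$ ensured by \eqref{eq:up_low_bund}.
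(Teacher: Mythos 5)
Your proposal follows essentially the same route as the paper's proof: take the Fourier transform of the reconstruction sum, apply the Poisson summation formula \eqref{eq:poiss} to $\hat f\,\Omega^{\mu;\nu}_{p;\alpha}$, discard the $k\neq 0$ terms by the same support-disjointness argument used in Theorem \ref{thm:3}, and collapse the remaining $p$-sum via the partition-of-unity identity \eqref{part:un}. The closing remarks on density and Bessel bounds are a welcome extra layer of rigor, but the core argument is identical to the one in the paper.
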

\begin{proof}
We notice immediately that  $\supp \Omega^{\mu;\nu}_{p;\alpha}(\omega) = \supp \Gapm(\omega)$.
We take the Fourier transform of \eqref{eq:rec1}, thus
 \begin{align}
\nonumber
		&\sum_{p,k} \langle f, \Psi^{\alpha}_{p,k}\rangle \widehat{\varphi}^{\alpha}_{p,k}(x)
\\
\nonumber
		&{}=\sum_{p\in\Z}\pt{\sum_{k\in \Z}\pt{\bap^{-\frac{1}{2}}  \Lprod{\hat{f}}{e^{-\pii\omega k
                        \frac{\nu}{\bap}
                        } \overline{\Omega^{\mu;\nu}_{p;\alpha}}(\omega)}}}
		\pt{\ba{p}^{-\tfrac{1}{2}}e^{-\pii\omega k
                        \frac{\nu}{\ba{p}}
                        }}\Gapm(\omega) \\
 \label{eq:conj} 
 &{}=\sum_{p\in\Z}\pt{ \ba{p}^{-1}\sum_{k\in \Z}  \F \pt{ \hat{f} \Omega^{\mu;\nu}_{p;\alpha} } 
                \pt{k \frac{\nu}{\bap}}
		e^{\pii\omega k
                        \frac{\nu}{\ba{p}}
                        }}
                        \Gapm(\omega).            
\end{align}
Then, by Poisson formula (cf. \eqref{eq:poiss}), \eqref{eq:conj} turns into                        
\[                                              
	\sum_{p\in\Z}\pt{ \nu^{-1}\sum_{k\in \Z}  \pt{\hat{f} \Omega^{\mu;\nu}_{p;\alpha} } 
                \pt{\omega+k\frac{\bap}{\nu}}
		}
                        \Gapm(\omega).            
\]
Repeating the same procedure as in the proof of Theorem \ref{thm:3}, i.e. excluding the terms with $k>0$,  we can conclude that, for
$\nu$ small enough,
\begin{align}
  \label{eq:quasi}
  \sum_{p,k} \langle f, \Psi^{\alpha}_{p,k}\rangle \widehat{\varphi}^{\alpha}_{p,k}(x)= 
  \frac{1}{\nu}\hat{f}(\omega)\sum_{p\in \Z} \Omega^{\mu;\nu}_{p;\alpha}(\omega)\Gapm(\omega).
\end{align}
By \eqref{part:un}, equation \eqref{eq:quasi} becomes
 \[
   \sum_{p,k} \langle f, \Psi^{\alpha}_{p,k}\rangle \widehat{\varphi}^{\alpha}_{p,k}(x)=\hat{f}(\omega).
 \]
Finally, applying the inverse Fourier transform, we obtain \eqref{eq:rec1}.

\end{proof}

\subsection{DOST frames, general construction}
In this section we prove that we can build up a $\alpha$-DOST frame with milder hypothesis on the window function compared with the ones of the previous section. \hspace{1em}\\
\begin{thm}\label{T:Frame_prop}
Consider a function $\varphi\in L^2(\Rr)\cap L^1(\Rr) $. 
As above, set $\Gapm(\omega)$ as (cf. \eqref{eq:G})
\begin{align*}
  \Gapm(\omega)= \sum_{\eta \in  \mu\Iap } \hat{\varphi}(\omega -\eta).
\end{align*}
Suppose that 
\[
a \leq \sum_{p}|\Gapm(\omega)|^2 \leq b,\quad  \forall\: \omega \in \R,
\]
for suitable constants $a,b>0$.
Moreover, assume that the window $\varphi$ satisfies
\begin{equation}
  \label{eq:decay_g}
  |\hat{\varphi}(\omega)| \leq \frac{C_N}{(1+|\omega|)^{N}}, \quad  \forall \omega \in \R
\end{equation}
for some $N>2$.

Then, there exists $A,B>0$ and $\nu_0>0$, such that for all $f \in L^2(\R)$
\[  
  A\|f\|^2_2 \leq |\langle S^{\mu,\nu}_{g,g}f,f\rangle| \leq B\|f\|^2_2,
\]
for  $0<\nu<\nu_0$.
\end{thm}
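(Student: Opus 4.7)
The strategy parallels Theorem~\ref{thm:3}, replacing the compact‑support argument by one driven by the polynomial decay \eqref{eq:decay_g}. First I would work with test functions having $\hat f\in C^\infty_c(\R)$ so that the Walnut representation of Lemma \ref{L:2} is rigorous (density then extends the resulting inequalities to all of $\Ll$), take the Fourier transform of $\Famnfg f$ and rewrite
\[
\Lprod{\Famnfg f}{f}
= \frac{1}{\nu}\int \pt{\sum_{p\in\Z}\abs{\Gapm(\omega)}^2}\abs{\hat f(\omega)}^2\,d\omega
\;+\;\sum_{k\neq 0}E_k(f),
\]
where
\[
E_k(f) := \frac{1}{\nu}\int \overline{\hat f(\omega)}\sum_{p\in\Z} \hat f\pt{\omega-k\bap/\nu}\,\overline{\Gapm\pt{\omega-k\bap/\nu}}\,\Gapm(\omega)\,d\omega.
\]
The hypothesis $a\leq \sum_p\abs{\Gapm}^2\leq b$ sandwiches the diagonal ($k=0$) contribution between $\frac{a}{\nu}\norm{f}_2^2$ and $\frac{b}{\nu}\norm{f}_2^2$.

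The crux is to show that $\sum_{k\neq 0}\abs{E_k(f)}$ is $o(\nu^{-1})\norm{f}_2^2$ as $\nu\to 0^+$. I would apply the elementary inequality $|\hat f(\omega)\,\hat f(\omega-k\bap/\nu)|\leq\frac12\bigl(|\hat f(\omega)|^2+|\hat f(\omega-k\bap/\nu)|^2\bigr)$ termwise and a change of variables in the translated half to collapse the estimate into a uniform bound on
\[
R_\nu(\omega) := \sum_{k\neq 0}\sum_{p\in\Z}\abs{\Gapm(\omega)}\bigl(\abs{\Gapm\pt{\omega-k\bap/\nu}}+\abs{\Gapm\pt{\omega+k\bap/\nu}}\bigr).
\]
The decay assumption \eqref{eq:decay_g} transfers to each bell $\Gapm=\sum_{\eta\in\mu\Iap}\hat\varphi(\cdot-\eta)$: for $\nu<(2\mu)^{-1}$ and $k\neq 0$, every point of $\omega\mp k\bap/\nu-\mu\Iap$ is at distance at least $\abs{k}\bap/(2\nu)$ from the origin, so summing $\bap$ polynomially decaying contributions yields
\[
\abs{\Gapm\pt{\omega\mp k\bap/\nu}} \;\leq\; \frac{C_N\,\nu^{N}}{\abs{k}^{N}\,\bap^{N-1}}
\]
whenever $\omega$ lies near the cluster $\mu\Iap$; a symmetric estimate based on the same decay controls the region off this essential support. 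Combined with the pointwise bound $\abs{\Gapm(\omega)}\leq\sqrt b$ from the hypothesis, one obtains
\[
\sup_{\omega\in\R}R_\nu(\omega)\;\leq\;C\,\nu^{N}\sum_{k\neq 0}\abs{k}^{-N}\sum_{p\in\Z}\bap^{-(N-1)},
\]
and both series converge under the standing assumption $N>2$, exploiting the (at least) polynomial growth of $\bap$ in $p$ for every $\alpha\in(0,1]$ (exponential when $\alpha=1$); the degenerate case $\alpha=0$ reduces to the classical Gabor frame and is covered by the remark preceding the theorem.

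Putting the two steps together gives $\sum_{k\neq 0}\abs{E_k(f)}\leq C\nu^{N-1}\norm{f}_2^2$, so picking $\nu_0$ with $C\nu_0^{N}\leq a/2$ produces the frame inequality $\frac{a}{2\nu}\norm{f}_2^2 \leq \abs{\Lprod{\Famnfg f}{f}}\leq \frac{2b}{\nu}\norm{f}_2^2$ for all $\nu<\nu_0$. The main technical obstacle is precisely the uniform control of $R_\nu(\omega)$: because the hop $k\bap/\nu$ depends on $p$, one cannot invoke the standard Gabor Walnut machinery directly and must simultaneously trade decay in $k$ against the growth of $\bap$; this is exactly why the exponent $N>2$ (rather than the Poisson‑summation threshold $N>1$) is needed to secure joint summability for every admissible $\alpha$.
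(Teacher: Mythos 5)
Your overall skeleton is the same as the paper's: pass to the Walnut representation of Lemma \ref{L:2}, isolate the $k=0$ term and sandwich it with the hypothesis $a\leq\sum_p|\Gapm|^2\leq b$, then show the $k\neq 0$ contribution is $o(1)\cdot\nu^{-1}\|f\|_2^2$ as $\nu\to0$. The gap is in the key step, the uniform control of your $R_\nu(\omega)$. Your bound
\[
\sup_{\omega}R_\nu(\omega)\leq C\nu^{N}\sum_{k\neq0}|k|^{-N}\sum_{p\in\Z}\bap^{-(N-1)}
\]
hinges on the convergence of $\sum_p\bap^{-(N-1)}$, and this is false in general: $\bap\equiv1$ for $\alpha=0$, and $\bap\asymp p^{\alpha/(1-\alpha)}$ for $\alpha\in(0,1)$, so for $N$ close to $2$ the series diverges for all sufficiently small $\alpha>0$ (you would need $N>1/\alpha$). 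The correct mechanism for summability in $p$ is not decay of $\bap$ at all. For a fixed $\omega$ there is exactly one $\bar p$ with $\omega\in\mu I_{\alpha;\bar p}$; in the regime where $\omega$ sits inside $\mu\Iap$ the $p$-sum collapses to the single index $\bar p$ (the paper's term \eqref{eq:H}), and in the complementary regime the summability comes from $\dist{\omega}{\mu\Iap}$ growing at least linearly in $|p-\bar p|$ because the intervals are disjoint with width $\geq1$ (Lemma \ref{L:unif_Sum}). Your estimate sums a worst-case bound over all $p$ as though $\omega$ could be near every $\mu\Iap$ simultaneously, which both introduces the spurious divergent series and hides where the smallness actually comes from.

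The second, related omission is the ``resonant'' off-diagonal terms: when $\omega$ is far from $\mu\Iap$, there is one $k_p$ for which $\omega-k_p\bap/\nu$ lands back in $\mu\Iap$, and there $|\Gapm(\omega-k_p\bap/\nu)|$ is of order one --- it does \emph{not} satisfy your decay bound in $k$. The smallness must then be extracted from the untranslated factor, $|\Gapm(\omega)|\leq C_{N,\mu}(1+\dist{\omega}{\mu\Iap})^{-(N-1)}$ (Lemma \ref{L:decay_G}, where one power is already lost to the $\bap$ translates --- this is the real reason $N>2$ is needed), together with the lower bound $\dist{\omega}{\mu\Iap}\geq(|k_p|/\nu-\mu)\bap$, splitting the exponent as $(N-1-\epsilon)+\epsilon$ to trade an $\epsilon$-power of the distance for a factor $\nu^{\epsilon}$ while keeping the rest summable in $p$. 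This is the content of the terms \eqref{eq:I} and \eqref{eq:M} in Lemma \ref{lem:limzero} and constitutes most of the work in the paper's proof; your ``symmetric estimate based on the same decay'' does not cover it, and the advertised rate $\nu^{N-1}$ is not attainable for these terms (only $\nu^{\epsilon}$ is). Finally, deferring $\alpha=0$ to the classical Gabor theory is not licensed by the statement, which is claimed for every $\alpha\in[0,1]$ with a single argument.
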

\subsection{Preparatory Lemmata}
We need some result concerning the decay of the elements $\Gapm$ outside the sets $\Iap$.
\begin{lem}\label{L:decay_G}
Let $\varphi \in \Ll \cap L^1(\Rr)$ such that:
\[
  |\hat{\varphi}(\omega)| \leq \frac{C_N}{(1+|\omega|)^{N}},
\] 
with $N> 2$. Then,
\begin{align}
  \label{eq:Dec_G}
  |\Gapm(\omega)| \leq \frac{C_{N,\mu}}{\pt{1 + \dist{\omega}{\mu \Iap}}^{N-1}}.
\end{align}
\end{lem}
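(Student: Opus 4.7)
The proposal is to start from the triangle inequality applied to the definition of $\Gapm$: using the hypothesis on $\hat{\varphi}$,
\[
|\Gapm(\omega)| \leq \sum_{\eta \in \mu\Iap} |\hat{\varphi}(\omega - \eta)| \leq \sum_{\eta \in \mu\Iap} \frac{C_N}{(1+|\omega-\eta|)^N}.
\]
So the question reduces to controlling this sum in terms of $d := \dist{\omega}{\mu\Iap}$. The structural fact to exploit is that $\mu\Iap = \{\mu(\iap + j) : j = 0, \ldots, \bap - 1\}$ is an arithmetic progression with step $\mu$, so the individual distances $|\omega - \eta|$ grow linearly as one moves $\eta$ away from the point of $\mu\Iap$ realizing $d$.

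The main step is the pointwise estimate: if $\eta^*\in \mu\Iap$ realizes the distance $d$, then for any $\eta \in \mu\Iap$ one has simultaneously $|\omega-\eta|\geq d$ and $|\omega-\eta|\geq |\eta-\eta^*|-d$, hence
\[
|\omega-\eta|\geq \tfrac{1}{2}\bigl(d+|\eta-\eta^*|\bigr)\geq \tfrac{1}{2}\bigl(d+ r\mu\bigr),
\]
where $r=|\eta-\eta^*|/\mu\in\{0,1,2,\dots\}$. Plugging this in and noting that at most two points of $\mu\Iap$ correspond to each $r$, I bound the sum by
\[
\sum_{\eta \in \mu\Iap} \frac{C_N}{(1+|\omega-\eta|)^N}\ \leq\ 2C_N\, 2^{N}\sum_{r=0}^{\infty}\frac{1}{(2+d+r\mu)^N}.
\]

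The last step is the integral comparison: the tail is estimated by the standard computation
\[
\sum_{r=0}^{\infty}\frac{1}{(2+d+r\mu)^N}\leq \frac{1}{(2+d)^N}+\int_0^{\infty}\frac{dr}{(2+d+\mu r)^N}= \frac{1}{(2+d)^N}+\frac{1}{\mu(N-1)(2+d)^{N-1}},
\]
which uses the assumption $N>2$ (in fact $N>1$ suffices here) to ensure convergence, and yields a bound of the form $C_{N,\mu}(1+d)^{-(N-1)}$. Combining these gives exactly \eqref{eq:Dec_G}.

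The estimate is essentially routine; the only subtlety — and the one place where care is needed — is the pointwise inequality $|\omega-\eta|\geq \tfrac12(d+|\eta-\eta^*|)$, which is what allows us to simultaneously profit from the distance $d$ to the set and from the spacing $\mu$ between points. Without this combined bound, one recovers only $(1+d)^{-N}$ from a single term or only a geometric sum with no $d$-dependence, and the claimed decay with exponent $N-1$ would be missed by a factor. The loss of one power (from $N$ to $N-1$) is of course the inevitable price paid for summing $\bap$ terms of slowly decaying tails.
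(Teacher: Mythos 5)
Your proof follows essentially the same route as the paper's: triangle inequality, a pointwise lower bound on $|\omega-\eta|$ that combines the distance $\delta=\dist{\omega}{\mu\Iap}$ to the set with the $\mu$-spacing of the arithmetic progression, and a sum/integral comparison that costs exactly one power of decay. The only slip is that your ``key pointwise estimate'' $|\omega-\eta|\geq\tfrac12\bigl(\delta+|\eta-\eta^*|\bigr)$ is false as stated (take $\omega$ midway between two consecutive points of $\mu\Iap$, $\eta^*$ one neighbour and $\eta$ the other, so that $|\omega-\eta|=\delta=\mu/2$ while the claimed bound is $3\mu/4$); however, the two inequalities you invoke, $|\omega-\eta|\geq\delta$ and $|\omega-\eta|\geq|\eta-\eta^*|-\delta$, do yield $|\omega-\eta|\geq\tfrac14\bigl(\delta+|\eta-\eta^*|\bigr)$, and since the constant is absorbed into $C_{N,\mu}$ the rest of your argument, and the conclusion, go through unchanged.
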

\begin{proof}
From the definition of $\Gapm$ we obtain:
\[
\abs{\Gapm(\omega)} \leq \sum_{\eta \in \Iap}\abs{\hat{\varphi}(\omega-\mu \eta)} \leq \sum_{\eta \in \Iap} \frac{C_N}{\pt{1 + \abs{\omega-\mu \eta}}^N}.
\]
Hence,
\[
\abs{\Gapm(\omega)} \leq {\pt{1 + \dist{\omega}{\mu \Iap}}^{-N}}\sum_{\eta \in \Iap} \frac{C_N \pt{1 + \dist{\omega}{\mu \Iap}}^{N}}{\pt{1 + \abs{\omega-\mu \eta}}^N}.
\]
If $\omega \in \mu\Iap$, then $ \dist{\omega}{\mu \Iap}=0$. Thus
\begin{align}
	\label{eq:ineq3}
	\abs{\Gapm(\omega)} \leq \sum_{\eta\in \Iap} C_N(1 + \abs{\omega-\mu \eta})^{-N} \leq  
	C_{N} \sum_{j\in \Z} (\mu|j| +1)^{-N},
\end{align}
which is clearly convergent for $N>2$.

If $\omega \notin \mu\Iap$, then $ \dist{\omega}{\mu \Iap}=\delta$ 
and 

\begin{align*}
\abs{\Gapm(\omega)}  &{}\leq \frac{2 C_N}{(1 + \delta)^N}\sum_{|\zeta|=0}^{\bap}\frac{(1+\delta)^N}{(1 + \mu \zeta +\delta)^N}
\\
 					 &{}\leq \frac{2C_{N}}{(1 + \delta)^N}\pt{ 1+\int_0^{+\infty} \frac{(1+\delta)^N}{(1 + \mu x +\delta)^N}\ud x} 
\\ 					 
 					 &{}=  \frac{2 C_N}{(1 + \delta)^N}\pt{1+\int_0^{+\infty} \frac{1}{(1 + \frac{\mu}{1+\delta} x)^N}\ud x }
\\
 					&{}= \frac{2 C_N}{(1 + \delta)^{N}}\pt{1 + \frac{(1+\delta)}{\mu}\int_0^{+\infty} \frac{1}{(1 + x)^N}\ud x} 
\\
 					&{}= \frac{2 C_N}{(1 + \delta)^{N}}\pt{1 + \frac{(1+\delta)}{\mu}C} \leq  \frac{C_{N,\mu}}{(1 + \delta)^{N-1}}
\end{align*}
for $N> 2$, which completes the proof.
\end{proof}

\begin{lem}\label{L:unif_Sum}
Let $\omega \in \Rr$ and $N>1$. Then  
\[
\sum_{p\in \Z} \frac{1}{\pt{1 + \dist{\omega}{\mu\Iap}}^{N} }\leq C
\]
and the constant is independent on $\omega$.
\end{lem}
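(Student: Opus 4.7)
The argument relies on the consecutive-partition structure of $\{\Iap\}_{p \in \Z}$: the intervals are pairwise disjoint, satisfy $\ia{p+1} = \sa{p}$, and each has width $\ba{p} \geq 1$. Fix $\omega \in \R$ and choose an index $p_0 = p_0(\omega) \in \Z$ that minimizes $\dist{\omega}{\mu \Ia{p_0}}$; at most two indices can attain the minimum, and the final estimate is insensitive to the choice.

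The key geometric step is that for every integer $k \geq 1$,
\[
\dist{\omega}{\mu \Ia{p_0 + k}} \geq \mu\pt{k-1},
\qquad
\dist{\omega}{\mu \Ia{p_0 - k}} \geq \mu\pt{k-1}.
\]
Indeed, iterating the relation $\ia{p+1} = \sa{p}$ gives $\ia{p_0 + k} = \sa{p_0} + \sum_{j=1}^{k-1} \ba{p_0 + j} \geq \sa{p_0} + \pt{k-1}$, so the left endpoint of $\mu \Ia{p_0 + k}$ lies at distance at least $\mu(k-1)$ from $\mu \Ia{p_0}$, and therefore from $\omega$. The estimate on the left is symmetric.

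Combining these bounds yields
\[
\sum_{p \in \Z} \frac{1}{\pt{1 + \dist{\omega}{\mu \Iap}}^{N}}
\leq 1 + 2 \sum_{k=1}^{\infty} \frac{1}{\pt{1 + \mu(k-1)}^{N}},
\]
and the right-hand side is a convergent series whenever $N > 1$, producing a bound independent of $\omega$. The only mild subtlety is to verify that the partition structure $\ia{p+1} = \sa{p}$ and the width bound $\ba{p} \geq 1$ extend correctly to negative indices through the reflection used to define $\Bpt$ for $p<0$ in Section~\ref{S:alfamod}; once that is checked, the telescoping argument above applies uniformly on both sides of the origin and the proof is complete.
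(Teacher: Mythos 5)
Your proof is correct and follows essentially the same route as the paper's: locate the interval $\mu\Ia{p_0}$ nearest to (in the paper, containing) $\omega$, use the consecutive-partition structure together with $\ba{p}\geq 1$ to get $\dist{\omega}{\mu\Ia{p_0\pm k}}\gtrsim k-1$, and sum the resulting convergent series for $N>1$. If anything you are slightly more careful than the paper, which drops the factor $\mu$ in the telescoped distance bound; this is harmless since $\mu$ is fixed and the constant is allowed to depend on it.
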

\begin{proof}
For any $\omega \in \Rr$, there exists only one $\overline{p}$ such that $\omega \in \mu I_{\alpha;\bar{p}}$. Then, for $j\geq 2$, 
  \begin{align*}
  	\dist{\omega}{\mu I_{\alpha;\overline{p}+ j}}&\geq \sum_{k=1}^{j-1} \beta_{\alpha}\pt{\overline{p}+ k} \geq (j-1),
  \end{align*}
  since $\bap\geq 1$. The same argument works for $j<-2$.\\
  Obviously
  \[
  \sum_{j =-1}^1 \frac{1}{\pt{1 + \dist{\omega}{\mu I_{\alpha;\overline{p}+j}}}^{N} }\leq 3.
  \]
  Hence, we can bound our sum as
  \[
    \sum_{p\in \Z} \frac{1}{\pt{1 + \dist{\omega}{\mu\Iap}}^{N} } \leq  \sum_{j\in \Z \setminus \ptg{0}} \frac{C_{N}}{j^{N} } 
  \]
and the constants do not depend on the particular choice of $\omega$, as desired.	
\end{proof}

\begin{lem}
  \label{lem:limzero}
  Let $\Gapm$ as in Theorem \ref{T:Frame_prop} above, then
  \[
    \lim_{\nu\to 0}   \sum_{p \in \Z} \sum_{k\in \Z\setminus\{0\}} \norm{\overline{\Gapm}\pt{\omega-k\frac{\ba{p}}{\nu}}\Gapm(\omega)}_{\infty}=0.
  \]
\end{lem}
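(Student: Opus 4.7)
The plan is to reduce everything to the pointwise decay estimate of Lemma \ref{L:decay_G}, and to exploit the geometric separation between the ``bumps'' $\mu\Iap$ (where $\Gapm(\omega)$ concentrates) and their shifts $\mu\Iap + k\bap/\nu$ (where $\overline{\Gapm}(\omega-k\bap/\nu)$ concentrates). Since the separation is at least $|k|\bap/\nu - \mu\bap$, which grows as $\nu \to 0$, the product of the two bumps is forced to decay uniformly in $\omega$, producing a factor of order $\nu^{N-1}$ outside the double sum.

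Concretely, setting $d_1(\omega) := \dist{\omega}{\mu\Iap}$ and $d_2(\omega) := \dist{\omega-k\bap/\nu}{\mu\Iap}$, Lemma \ref{L:decay_G} applied to each factor yields
\[
\abs{\overline{\Gapm}\pt{\omega - k\bap/\nu}\Gapm(\omega)} \leq \frac{C^2_{N,\mu}}{(1 + d_1(\omega))^{N-1}(1 + d_2(\omega))^{N-1}}.
\]
Since $\mu\Iap$ has length $\mu\bap$, a triangle inequality gives $d_1(\omega) + d_2(\omega) \geq |k|\bap/\nu - \mu\bap$ uniformly in $\omega$, and combining with the elementary bound $(1+d_1)(1+d_2) \geq 1 + d_1 + d_2$ one obtains
\[
\norm{\overline{\Gapm}\pt{\omega - k\bap/\nu}\Gapm(\omega)}_{\infty} \leq \frac{C^2_{N,\mu}}{(1 + |k|\bap/\nu - \mu\bap)^{N-1}}.
\]
Restricting to $\nu \leq 1/(2\mu)$ so that $|k|\bap/\nu - \mu\bap \geq |k|\bap/(2\nu)$, the right-hand side is controlled by $C\pt{2\nu/(|k|\bap)}^{N-1}$.

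Summing over $p \in \Z$ and $k \in \Z\setminus\ptg{0}$ yields
\[
\sum_{p\in\Z}\sum_{k\neq 0} \norm{\overline{\Gapm}\pt{\cdot - k\bap/\nu}\Gapm(\cdot)}_\infty \leq C\,\nu^{N-1} \pt{\sum_{k \neq 0}\abs{k}^{-(N-1)}} \pt{\sum_{p \in \Z}\bap^{-(N-1)}},
\]
which vanishes as $\nu \to 0$ provided both series converge: the $k$-series does, since $N > 2$; the $p$-series does, since $\bap$ grows geometrically for $\alpha = 1$ and polynomially of order $\alpha/(1-\alpha)$ for $\alpha \in (0,1)$ by the iterative definition \eqref{eq:bap}. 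The main potential obstacle is precisely the convergence of $\sum_p \bap^{-(N-1)}$ when $\alpha$ is small and $N$ close to $2$; in such borderline regimes one can refine the estimate by splitting according to which of $d_1(\omega)$ or $d_2(\omega)$ dominates, and using Lemma \ref{L:unif_Sum} to sum over $p$ at fixed $\omega$ before taking the supremum, at the cost of a slightly more elaborate argument.
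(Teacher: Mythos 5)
Your pointwise estimate for a single pair $(p,k)$ is correct and cleanly packaged: the triangle inequality $\dist{\omega}{\mu\Iap}+\dist{\omega-k\bap/\nu}{\mu\Iap}\geq |k|\bap/\nu-\mu\bap$, combined with Lemma \ref{L:decay_G} applied to both factors, does give $\norm{\overline{\Gapm}\pt{\cdot-k\bap/\nu}\Gapm}_{\infty}\leq C\pt{\nu/(|k|\bap)}^{N-1}$ for $\nu$ small. The gap is the final summation over $p$: your bound needs $\sum_{p\in\Z}\bap^{-(N-1)}<\infty$, and this is not a borderline nuisance that a sharper constant can fix. For $\alpha=0$ one has $\bap\equiv1$ and $\mu\Iap=\ptg{\mu p}$, so $\norm{\overline{\Gapm}\pt{\cdot-k/\nu}\Gapm}_{\infty}=\sup_{u}\abs{\hat{\varphi}(u-k/\nu)\hat{\varphi}(u)}$ does not depend on $p$ at all; the double sum with the supremum inside each term is therefore $+\infty$ for every $\nu>0$ unless $\hat{\varphi}$ is compactly supported (it diverges for the Gaussian). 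So in that regime no refinement of your main estimate can work, and your suggested repair --- summing over $p$ at fixed $\omega$ via Lemma \ref{L:unif_Sum} and only then taking the supremum --- proves the different, weaker quantity $\norm{\sum_{p,k}\abs{\cdots}}_{\infty}\to 0$ rather than $\sum_{p,k}\norm{\cdots}_{\infty}\to 0$.

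This is precisely where your route departs from the paper's. The paper splits $\Gapm={}_1\Gapm+{}_2\Gapm$ with ${}_1\Gapm=\chi_{\mu\Iap}\Gapm$ and handles four cross terms: the in--in term vanishes outright because the supports are disjoint for $k\neq0$ and $\nu\mu<1$; for the mixed terms it exploits that for each fixed $\omega$ only one $p$ satisfies $\omega\in\mu\Iap$ (respectively, at most one $k$ per $p$ brings $\omega-k\bap/\nu$ into $\mu\Iap$), so the $p$-summability comes from Lemma \ref{L:unif_Sum} through a surviving factor $(1+\dist{\omega}{\mu\Iap})^{-(N-1-\epsilon)}$ rather than from $\bap^{1-N}$; the tail--tail term requires the additional set $F^{\omega}_{p}$. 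Note that all of the paper's bounds are derived pointwise in $\omega$ and then shown to be uniform, so what is actually controlled there is the same $\sup_{\omega}\sum_{p,k}$ quantity your repair would produce, and that is also what the proof of Theorem \ref{T:Frame_prop} consumes. In short: your single-estimate shortcut is correct and genuinely simpler for $\alpha=1$ (where $\bap=2^{p-1}$ makes $\sum_p\bap^{1-N}$ converge, and more generally whenever $(N-1)\alpha/(1-\alpha)>1$), but for the full range $\alpha\in[0,1]$ you must retain the $\omega$-dependence of at least one factor and perform the $p$-sum before taking the supremum, as the paper does.
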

\begin{proof}
  Let us define for all $p \in \Z$
\[
 _1\Gapm(\omega)= \chi_{\mu\Iap}(\omega) \Gapm(\omega)
\]
and
\[
  _2 \Gapm(\omega)= \Gapm(\omega)- \phantom{a}_1\Gapm(\omega).
\]
We can write
\begin{align}
  \nonumber
  &   \sum_{p \in \Z} \sum_{k\in \Z\setminus\{0\}}  \norm{ \overline{\Gapm}\pt{\omega-k\frac{\ba{p}}{\nu}}\Gapm(\omega)}_{\infty}
\\
  \label{eq:K}
  &\leq   \sum_{p \in \Z} \sum_{k\in \Z\setminus\{0\}}  \norm{ \overline{_1\Gapm}\pt{\omega-k\frac{\ba{p}}{\nu}} \phantom{.}_1 \Gapm(\omega)}_{\infty}
\\
  \label{eq:H}
&\quad{}+    \sum_{p \in \Z} \sum_{k\in \Z\setminus\{0\}}  \norm{\overline{_2\Gapm}\pt{\omega-k\frac{\ba{p}}{\nu}} \phantom{.}_1\Gapm\pt{\omega} }_{\infty}
\\
  \label{eq:I}&\quad{}+    \sum_{p \in \Z} \sum_{k\in \Z\setminus\{0\}}  \norm{ \overline{_1\Gapm}\pt{\omega-k\frac{\ba{p}}{\nu}}\phantom{.}_2 \Gapm(\omega)}_{\infty}
\\    \label{eq:M}
  &\quad{}+   \sum_{p \in \Z} \sum_{k\in \Z\setminus\{0\}}  \norm{\overline{_2\Gapm}\pt{\omega-k\frac{\ba{p}}{\nu}} \phantom{.}_2\Gapm(\omega)}_{\infty}.
\end{align}
For now on, let us suppose $\nu \mu<1$; then we notice that the term in \eqref{eq:K} is identically zero for each $p\in\Z$, since
the supports are disjoint. Clearly this is not restrictive since we are considering the limit for $\nu \rightarrow 0$.
We remark that:
\begin{equation}
\label{eq:distance}
	\dist{\omega -k\frac{\bap}{\nu}}{\mu \Iap} \geq \pt{\frac{|k|}{\nu}-\mu}  \bap, \qquad \forall \omega \in \mu\Iap,
\end{equation}
with
\[
  |k|-\nu\mu > 0, \quad k \in \Z\setminus\{0\}. 
\]

We then analyze the term in \eqref{eq:H}. For each $\omega$ there exists a unique $p = \overline{p}$ such that $\omega\in \mu I_{\alpha;\overline{p}}$, hence for each $\omega$
\begin{align}
 \nonumber
  & \sum_{p \in \Z} \sum_{k\in \Z\setminus\{0\}}  \abs{\overline{_2\Gapm}\pt{\omega-k\frac{\ba{p}}{\nu}} \phantom{.}_1\Gapm\pt{\omega} }
  	 = \sum_{k\in \Z\setminus\{0\}}\abs{\overline{_2 \Phi_{\overline{p};\mu}^{\alpha}}\pt{\omega-k\frac{\ba{\overline{p}}}{\nu}}  \phantom{.}_1 \Phi_{\overline{p};\mu}^{\alpha}\pt{\omega}}
 \\
 \nonumber
 & \leq C_{N,\mu}\sum_{k\in \Z\setminus\{0\}}\frac{C_{N,\mu}}{\pt{1+\dist{\omega -k\frac{\ba{\overline{p}}}{\nu}}{\mu I_{\alpha;\overline{p}}}}^{N-1}} 
 \\
 \nonumber
 & \leq C_{N,\mu}  \sum_{k\in \Z\setminus\{0\}}\frac{\tilde{C}_N}{\pt{1+ |  \pt{\frac{|k|}{\nu}-\mu}\ba{\overline{p}}|}^{N-1}}\\
 \label{eq:ultima}
 & \leq C_{N,\mu} \tilde{C}_N \sum_{k\in \Z\setminus\{0\}} \pt{\pt{\frac{|k|}{\nu}-\mu} }^{1-N}.
\end{align}
We made use of the decay proven in Lemma \ref{L:decay_G} and of \eqref{eq:distance}.
Since $N-1>1$ and the inequality \eqref{eq:ultima} does not depend on $\omega$, we obtain 
\begin{align}
\nonumber
  \sum_{p \in \Z} \sum_{k\in \Z\setminus\{0\}}  \norm{\overline{_2\Gapm}\pt{\omega-k\frac{\ba{p}}{\nu}} \phantom{.}_1\Gapm\pt{\omega} }_{\infty}&\leq C_{N,\mu} \tilde{C}_N \sum_{k\in \Z\setminus\{0\}}\pt{\pt{\frac{|k|}{\nu}-\mu}}^{1-N}\\
 \label{eq:rate}
 & = C_{N,\mu} \tilde{C}_N\nu^{N-1} \sum_{k\in \Z\setminus\{0\}}\frac{1}{\pt{|k|-\nu\mu}^{1-N}}.	
\end{align}
Since the sum is convergent uniformly with respect to $\nu$, the term in \eqref{eq:rate} goes to $0$ as $\nu \rightarrow 0$ with the rate $\nu^{N-1}$.

In order to consider the term in \eqref{eq:I}, notice that for all $\omega \in \R$, there exists
a unique $\bar{p}$ such that $\omega \in \mu I_{\alpha;\bar{p}}$. Moreover, arguing as above, 
for all $p\in \Z\setminus \ptg{\bar{p}}$ there exist a unique $k_p\in \Z\setminus\ptg{0}$ such that $\omega-
k_p\frac{\bap}{\nu} \in \Iap$. Hence, we can write \eqref{eq:I}  as
\begin{align}
  \label{eq:prim}
  \sum_{p\in \Z\setminus\ptg{\bar{p}}} \norm{ \overline{_1\Gapm}\pt{\omega-k_p\frac{\ba{p}}{\nu}}\phantom{.}_2 \Gapm(\omega)}_{\infty}.
\end{align}
By \eqref{eq:distance}
\[
  d(\omega, \mu \Iap)\geq \pt{\frac{|k_p|}{\nu}-\mu}\bap, \quad \mbox{if } \omega-k_p\frac{\bap}{\nu}\in \mu \Iap.
\]
Finally, by \eqref{eq:Dec_G}, we can write
\begin{align}
  \nonumber
  \sum_{p\in \Z\setminus\ptg{\bar{p}}} \norm{ \overline{_1\Gapm}\pt{\omega-k_p\frac{\ba{p}}{\nu}}\phantom{.}_2 \Gapm(\omega)}_{\infty}&\leq \sum_{p\in \Z\setminus\ptg{\bar{p}}} \frac{C_{N,\mu}^2}{(1+d(\omega,\mu\Iap))^{N-1}}\\
  \nonumber
  &\leq \sum_{p\in \Z\setminus\ptg{\bar{p}}} \frac{C_{N,\mu}^2}{(1+d(\omega,\mu\Iap))^{N-1-\epsilon}}\frac{1}{(1+d(\omega,\mu\Iap))^{\epsilon}} \\
  \nonumber
  &\leq \sum_{p\in \Z\setminus\ptg{\bar{p}}} \frac{C_{N,\mu}^2}{(1+d(\omega,\mu\Iap))^{N-1-\epsilon}}\frac{1}{\pt{1+\pt{\frac{|k_p|}{\nu}-\mu}\bap}^{\epsilon}}\\
  \label{eq:ultima1}
  &\leq \nu^{\epsilon} \sum_{p\in \Z\setminus\ptg{\bar{p}}}  \frac{\tilde{C}} {(1+d(\omega,\mu\Iap))^{N-1-\epsilon}}.
\end{align}
The sum in \eqref{eq:ultima1} converges uniformly with respect to $\omega$ by Lemma \ref{L:unif_Sum}, therefore the whole
sum goes to zero as $\nu$ does.

The last term to be taken into account is \eqref{eq:M}, which includes the ``tails" of our window function.\\
Fix $p\in \Z$,  by definition if either $\omega\in \mu \Iap$ or $\omega -k\frac{\bap}{\nu} \in \mu \Iap$, then 
\[
\overline{_2\Gapm}\pt{\omega-k\frac{\ba{p}}{\nu}} \phantom{.}_2\Gapm(\omega) = 0.
\]
Define the set
\[
F^{\omega}_{p} = \ptg{k : 0 < \dist{\omega -k\frac{\bap}{\nu}}{\mu\Iap} < \frac{1}{2\nu}\bap}.
\]
Notice that only a finite number of element $k$ belong to this set; precisely, when $\mu\nu<1$, then 
\[
|F^{\omega}_{p}| \leq 2, \qquad   \textrm{ uniformly in } \omega \textrm{ and } p.
\]
So, for all $\omega$, we can split the term in \eqref{eq:M} as follows:
\begin{align*}
 \sum_{p\in \Z}\sum_{k\in \Z\setminus\{0\}}\abs{\overline{_2\Gapm}\pt{\omega-k\frac{\ba{p}}{\nu}} \phantom{.}_2\Gapm(\omega)}&\leq\sum_{p\in \Z}\sum_{k\notin F^{\omega}_{p}} \abs{\overline{_2\Gapm}\pt{\omega-k\frac{\ba{p}}{\nu}} \phantom{.}_2\Gapm(\omega)}\\
&\quad + 
  \sum_{p\in \Z}\sum_{k\in F^{\omega}_{p}} \abs{\overline{_2\Gapm}\pt{\omega-k\frac{\ba{p}}{\nu}} \phantom{.}_2\Gapm(\omega)}.
\end{align*}
We show that both terms are bounded by a constant times $\nu^{\epsilon}$ with $\epsilon >$ small enough independent on $\omega$.\medskip\\
We notice that if $k\notin F^{\omega}_{p}$, then 
\[
\dist{\omega -k\frac{\bap}{\nu}}{\mu\Iap} \geq  \frac{|k-\tilde{k}|}{\nu}\bap,
\]
where $\tilde{k}$ is the closest index to $k$ inside $F^{\omega}_{p}$; hence, we can rearrange the (infinite) indexes $k\in \Z\setminus\{0\}$ into $j\in \Z\setminus\{0\}$ such that 
\[
\dist{\omega -j\frac{\bap}{\nu}}{\mu\Iap} \geq  \frac{|j|}{\nu}\bap.
\] 
From the discussion above, and the estimate \eqref{eq:Dec_G} it follows that 
\begin{align}
 \sum_{p\in \Z} \sum_{k\notin F^{\omega}_{p}}\abs{\overline{_2\Gapm}\pt{\omega-k\frac{\ba{p}}{\nu}}\! \phantom{.}_2\Gapm(\omega)}\nonumber &\leq  \sum_{p\in \Z}\sum_{j\in \Z\setminus\{0\}} \abs{\frac{C_{N,\mu}}{\pt{1 + \frac{|j|}{\nu}\bap}^{N-1} }\frac{C_{N,\mu}}{\pt{1 + \dist{\omega}{\mu\Iap}}^{N-1} }}\nonumber
  \\
  &{}\leq \sum_{p\in \Z} \sum_{j\in \Z\setminus\{0\}}{\frac{C_{N,\mu}}{\pt{\frac{|j|}{\nu}}^{N-1} }\frac{C_{N,\mu}}{\pt{1 + \dist{\omega}{\mu\Iap}}^{N-1} }}\nonumber
  \\
  &{}\leq \sum_{p\in \Z} \abs{ \sum_{j\in \Z\setminus\{0\}}C_{N,\mu}\pt{\frac{\nu}{j}}^{N-1}\frac{C_{N,\mu}}{\pt{1 + \dist{\omega}{\mu\Iap}}^{N-1} }} \nonumber.
  \end{align}
  The latter term is summable in $j$,   Lemma~\ref{L:unif_Sum} implies the summability in $p$ as well,
  therefore the whole term goes to zero as $\nu^{N-1}$.\medskip\\
The last part follows from the observation below:
if there exists $k \in F^{\omega}_p$, then 
\[
0<\dist{\omega - k\frac{\bap}{\nu}}{\mu \Iap}<\frac{1}{2\nu}\bap.
\]
Since $\mu$ is fixed and $\nu \rightarrow 0$, we can assume  $\mu\nu < 1$; thus, there exists $\delta$ such that  $\mu\nu = 1-\delta$.
We split our analysis in two cases, first we consider 
\[
  \dist{\omega - k\frac{\bap}{\nu}}{\mu \Iap} \leq \frac{\delta}{2\nu}\bap.
\]
Let $s \in \mu\Iap$ such that $\dist{\omega}{s} = \dist{\omega}{\mu \Iap}$ then by triangular inequality 
\begin{align*}
\dist{\omega}{\mu \Iap} &{}= \dist{\omega}{s}\geq\dist{\omega}{\omega - k\frac{\bap}{\nu}} - \dist{\omega - k\frac{\bap}{\nu}}{s}\\
&{} \geq |k|\frac{\bap}{\nu} -\pt{ \frac{\delta}{2\nu} +\mu}\bap.
\end{align*}
Hence
\begin{align*}
\dist{\omega}{\mu \Iap}
&{} > |k|\frac{\bap}{\nu} - \frac{\bap}{\nu}\pt{1-\frac{\delta}{2}}= \frac{\bap}{\nu}\pt{|k| -1+ \frac{\delta}{2}}
\end{align*}
If $|k|\geq 1$, it is clear that 
\[
\dist{\omega}{\mu \Iap} > \frac{\delta}{2\nu}\bap
\]
Then,
\[
|\phantom{.}_2\Gapm(\omega)| \leq \frac{C_{N,\mu}}{\pt{1 + \dist{\omega}{\mu\Iap}}^{N-1- \epsilon}}\frac{1}{\pt{1 + \frac{\delta}{2}
\ \frac{\bap}{\nu}}^{\epsilon}} \leq 
		\frac{C_{N,\mu} \nu^{\epsilon}}{\pt{1 + \dist{\omega}{\mu\Iap}}^{N-1- \epsilon}}.
\]
On the other hand, when 
\[\dist{\omega - k\frac{\bap}{\nu}}{\mu \Iap} > \frac{\delta}{2\nu}\bap
\]
we have
\begin{align*}
 \abs{\overline{_2\Gapm}\pt{\omega-k\frac{\ba{p}}{\nu}}}& \leq \frac{C_{N,\mu}}{\pt{1 + \dist{\omega - k\frac{\bap}{\nu}}{\mu\Iap}}^{N-1}}\\
 & \leq \frac{C_{N,\mu}}{\pt{1+\frac{\delta}{2\nu}\beta(p)}^{N-1}}
\end{align*}
which goes to zero as $\nu^{N-1}$.
Using the inequalities, it is clear that in both cases
\begin{equation}
\label{eq:nome}
\sum_{p\in \Z} \sum_{k\in F^{\omega}_{p}}\abs{\overline{_2\Gapm}\pt{\omega-k\frac{\ba{p}}{\nu}} \phantom{.}_2\Gapm(\omega)} \leq |F^{\omega}_{p}| \sum_{p\in \Z}  \frac{\tilde{C}_N \nu^{\epsilon}}{\pt{1 + \dist{\omega}{\mu\Iap}}^{N-1- \epsilon}} 
\end{equation}
since the sum is uniformly bounded with respect to $\omega$, the above inequalities imply that 
\eqref{eq:nome} goes to zero as $\nu^{\epsilon}$. The quantity $\pt{1 + \dist{\omega}{\mu\Iap}}^{N-1- \epsilon}$ is summable if $N-1- \epsilon>1$. This is granted by the fact that $N>2$ and that we can chose $\epsilon$ small enough.

\end{proof}
\subsection{Proof of the main result}
\begin{proof}
As we did in the painless case, we can represent the frame operator as (cf. \eqref{eq:sepk})
\begin{align*}
   \Lprod{\Famnfg f} {f}&=\Lprod{\F\pt{\Famnfg f}} {\hat{f}}=\frac{1}{\nu} \Lprod{\sum_{p\in\Z}\sum_{k\in \Z}T_{k\frac{\bap}{\nu}}\pt{\hat{f}\pt{\omega} }\overline{\Gapm}\pt{\omega-k\frac{\ba{p}}{\nu}}\Gapm(\omega)}{\hat{f}\pt{\omega}}
\\
   &=\frac{1}{\nu}  \Lprod{\sum_{p\in\Z}\hat{f}\pt{\omega} \overline{\Gapm}\pt{\omega}\Gapm\pt{\omega}}{\hat{f}\pt{\omega}}
\\
   &\qquad+\frac{1}{\nu} 
   \Lprod{\sum_{p\in\Z}\sum_{k \in \Z\setminus \ptg{0}}T_{k\frac{\bap}{\nu}}\pt{\hat{f}\pt{\omega} }\overline{\Gapm}\pt{\omega-k\frac{\ba{p}}{\nu}}\Gapm(\omega)}{\hat{f}\pt{\omega}}
\\
&\leq \frac{1}{\nu}  \Lprod{\sum_{p\in\Z}\abs{\Gapm\pt{\omega}}^2\hat{f}\pt{\omega}}{\hat{f}\pt{\omega}}
\\
   &\qquad+\frac{1}{\nu} 
   \sum_{p\in\Z}\sum_{k \in \Z\setminus \ptg{0}}\norm{\overline{\Gapm}\pt{\omega-k\frac{\ba{p}}{\nu}}\Gapm(\omega)}_{\infty}\Lprod{T_{k\frac{\bap}{\nu}}\pt{\hat{f}\pt{\omega} }}{\hat{f}\pt{\omega}}
\\   
&\leq \frac{1}{\nu}  \sum_{p\in\Z}\norm{\Gapm\pt{\omega}}_{\infty}^2\|f\|_2^2\\
   &\qquad+\frac{1}{\nu} 
   \sum_{p\in\Z}\sum_{k \in \Z\setminus \ptg{0}}\norm{\overline{\Gapm}\pt{\omega-k\frac{\ba{p}}{\nu}}\Gapm(\omega)}_{\infty}\norm{T_{k\frac{\bap}{\nu}}\pt{\hat{f}\pt{\omega} }}_2 \norm{\hat{f}}_2
\\
&= \frac{1}{\nu} \ptg{ \sum_{p\in\Z}\norm{\Gapm\pt{\omega}}_{\infty}^2+ 
   \sum_{p\in\Z}\sum_{k \in \Z\setminus \ptg{0}}\norm{\overline{\Gapm}\pt{\omega-k\frac{\ba{p}}{\nu}}\Gapm(\omega)}_{\infty}} \norm{f}^2_2
\end{align*}
Similarly,
\begin{align*}
   \Lprod{\Famnfg f} {f}
&\geq \frac{1}{\nu} \ptg{ \inf_{\omega\in\R}\sum_{p\in\Z}\abs{\Gapm\pt{\omega}}^2 -
   \sum_{p\in\Z}\sum_{k \in \Z\setminus \ptg{0}}\norm{\overline{\Gapm}\pt{\omega-k\frac{\ba{p}}{\nu}}\Gapm(\omega)}_{\infty}} \norm{f}^2_2
\end{align*}
    By hypothesis, we know that
\[
    a\leq \sum_{p\in\Z}\abs{\Gapm\pt{\omega}}^2 \leq b.
\]
   Lemma \ref{lem:limzero} implies that 
\[
\lim_{\nu\rightarrow0}\sum_{p\in\Z}\sum_{k \in \Z\setminus \ptg{0}}\norm{\overline{\Gapm}\pt{\omega-k\frac{\ba{p}}{\nu}}\Gapm(\omega)}_{\infty} =0.
\]
Hence, there exists $\nu_0>0$ such that for all $0<\nu<\nu_0$
\[
         \sum_{p\in\Z}\sum_{k \in \Z\setminus \ptg{0}}\norm{\overline{\Gapm}\pt{\omega-k\frac{\ba{p}}{\nu}}\Gapm(\omega)}_{\infty}  \leq\tfrac{a}{2}
\]
Then, the action of the frame operator can be bounded as follows
\begin{align*}
	\abs{\Lprod{\Famnfg f} {f}} &\leq \tfrac{1}{\nu}\ptg{ \sum_{p\in\Z}\norm{\Gapm\pt{\omega}}_{\infty}^2+ 
   \sum_{p\in\Z}\sum_{k \in \Z\setminus \ptg{0}}\norm{\overline{\Gapm}\pt{\omega-k\frac{\ba{p}}{\nu}}\Gapm(\omega)}_{\infty}} \norm{f}^2_2\\
   &\leq \tfrac{1}{\nu}\pt{b+\tfrac{a}{2}}\norm{f}_2^2
\\
	\abs{\Lprod{\Famnfg f} {f}} &\geq \tfrac{1}{\nu}\ptg{\inf_{\omega \in \R}\sum_{p\in\Z}\abs{\Gapm\pt{\omega}}^2- 
   \sum_{p\in\Z}\sum_{k \in \Z\setminus \ptg{0}}\norm{\overline{\Gapm}\pt{\omega-k\frac{\ba{p}}{\nu}}\Gapm(\omega)}_{\infty}} \geq \tfrac{a}{2\nu}\norm{f}_2^2,
\end{align*}
as desired.
\end{proof}
\subsection{Existence of DOST frames}
We show that the Gaussian satisfies the hypotheses of the main Theorem \ref{T:Frame_prop}.
\begin{thm}\label{T:exGauss}
Consider $\varphi = \frac{1}{\sqrt{2}}e^{-\pi x^2} $ and set $\Gapm(\omega)$ as in \eqref{eq:G}.
Then, 
\begin{equation}
	\label{eq:uplow}
	a \leq \sum_{p}|\Gapm(\omega)|^2 \leq b, \forall\quad \omega \in \R.,
\end{equation}
for suitable constants $a,b>0$.
Moreover, 
\begin{equation}
\label{eq:decayrate}
|\hat{\varphi}(\omega)| \leq \frac{C_N}{(1+|\omega|)^{N}}, \quad \forall \omega \in \R,
\end{equation}
for any $N\geq 0$.	
\end{thm}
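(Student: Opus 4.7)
The plan is to verify the two conditions of Theorem \ref{T:Frame_prop} separately, with the decay estimate being essentially immediate and the two-sided bound on $\sum_p |\Gapm(\omega)|^2$ following from the preparatory lemmas together with the strict positivity of the Gaussian.

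The decay estimate \eqref{eq:decayrate} is free: with the Fourier normalization of the paper, $\hat\varphi(\omega) = \frac{1}{\sqrt{2}} e^{-\pi\omega^2}$, which decays faster than any inverse polynomial, so \eqref{eq:decayrate} holds for every $N\ge 0$ with an appropriate constant $C_N$. In particular the hypothesis of Lemma \ref{L:decay_G} is met for any $N>2$, and that lemma yields
\[
  |\Gapm(\omega)| \le \frac{C_{N,\mu}}{(1+\dist{\omega}{\mu\Iap})^{N-1}}.
\]
Squaring, summing over $p\in\Z$, and invoking Lemma \ref{L:unif_Sum} with exponent $2(N-1)>1$ produces a finite upper bound $b$, uniform in $\omega$.

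The main step is the lower bound in \eqref{eq:uplow}, and the key observation is that $\hat\varphi > 0$ everywhere, so no cancellation among terms of $\Gapm$ can occur. Given $\omega\in\R$, let $k_0\in\Z$ be the nearest integer to $\omega/\mu$, so that $|\omega-\mu k_0|\le \mu/2$. Since $\{I_{\alpha;p}\}_{p\in\Z}$ partitions $\Z$, there is a unique $\bar p$ with $k_0\in I_{\alpha;\bar p}$. Dropping every other (positive) term in the sum defining $\Phi^{\mu}_{\bar p;\alpha}(\omega)$ leaves
\[
  \Phi^{\mu}_{\bar p;\alpha}(\omega) \ge \hat\varphi(\omega-\mu k_0) \ge \tfrac{1}{\sqrt{2}} e^{-\pi\mu^2/4},
\]
so that
\[
  \sum_{p\in\Z} |\Gapm(\omega)|^2 \ge |\Phi^{\mu}_{\bar p;\alpha}(\omega)|^2 \ge \tfrac{1}{2} e^{-\pi\mu^2/2} =: a > 0,
\]
uniformly in $\omega$.

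I do not foresee any real obstacle here: the upper bound and the decay are routine applications of the preparatory lemmas and of the explicit Gaussian Fourier transform, while the lower bound reduces to the elementary combinatorial fact that at least one grid point $\mu k_0$ lies within $\mu/2$ of any prescribed $\omega$ and belongs to exactly one block $\mu I_{\alpha;\bar p}$. The only mild care needed is to keep track of the $\mu$-dependence of the constants coming from Lemmas \ref{L:decay_G} and \ref{L:unif_Sum}, but this affects neither the positivity of $a$ nor the finiteness of $b$.
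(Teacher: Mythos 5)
Your proposal is correct, and it splits from the paper's proof in an interesting way on the upper bound. For the lower bound you and the paper use the same idea — strict positivity of the Gaussian plus the existence of a lattice point $\mu k_0$ close to $\omega$ inside the unique block $\mu I_{\alpha;\bar p}$ containing it — and your bookkeeping via the nearest integer to $\omega/\mu$ even yields a marginally better constant ($\tfrac12 e^{-\pi\mu^2/2}$ versus the paper's $\tfrac12 e^{-2\pi\mu^2}$, which uses only $|t-j|\le 1$). For the upper bound, however, the paper again exploits positivity to write $\sum_p|\Gapm(\omega)|^2\le\bigl|\sum_p\Gapm(\omega)\bigr|^2=\bigl|\sum_j\hat\varphi(\omega-\mu j)\bigr|^2$ and then invokes the Wiener amalgam norm of the Gaussian, whereas you square the decay estimate of Lemma \ref{L:decay_G} and sum over $p$ via Lemma \ref{L:unif_Sum} with exponent $2(N-1)>1$. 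Your route is more general — it needs no sign condition on $\hat\varphi$, only polynomial decay of order $N>3/2$, so it would apply verbatim to non-positive windows — while the paper's is more elementary and self-contained, not relying on the preparatory lemmata of the general construction. Both arguments are sound; the only caveat in yours is the one you already flagged, namely tracking the $\mu$-dependence of $C_{N,\mu}$, which does not threaten finiteness of $b$.
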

\begin{proof}
The polynomial decay claimed in~\eqref{eq:decayrate} is trivial, since $\varphi$  is a Schwartz function.
For the lower bound in \eqref{eq:uplow}, we argue as follows: for any $\omega \in \Rr$ there exists only one $\overline{p}$ such that $\omega \in \mu\Iap$. For the sake of simplicity, we assume $\overline{p}>0$, the negative case follows with the same argument.\\
Therefore,
\[
	\sum_{p}|\Gapm(\omega)|^2\geq |\Phi^{\mu}_{\overline{p};\alpha}(\omega)|^2.
\]
Since $\omega \in \mu I_{\alpha, \bar{p}}$, then $\omega = \mu(\ia{\bar{p}} + t),\: 0\leq t\leq \ba{\bar{p}}$.
Hence
\begin{align*}
|\Phi^{\mu}_{\overline{p};\alpha}(\omega)|^2 &{}= \frac{1}{2}\abs{\sum_{\eta\in \mu I_{\alpha;\overline{p}}}e^{-\pi\pt{\mu(\ia{\bar{p}} + t) -\eta}^2}}^2 
\\
&{}=\frac{1}{2} \abs{\sum_{j=0}^{\ba{\bar{p}}-1}e^{-\pi\pt{\mu(\ia{\bar{p}} + t) -\mu(\ia{\bar{p}} + j)}^2}}^2 
\\
&{}=\frac{1}{2} \abs{\sum_{j=0}^{\ba{\bar{p}}-1}e^{-\pi\mu^2(t- j)^2}}^2 
\geq\frac{1}{2} \abs{\sup_{j=0,\ldots,\ba{\bar{p}}-1}e^{-\pi\mu^2(t-j)^2}}^2,
\end{align*}
because of the positivity of the Gaussian. The maximum value is reached when  $|t-j|$ is small. 
Our construction implies that there exists $j$ such that $|t-j|\leq 1 $, thus
\begin{align*}
|\Phi^{\mu}_{\overline{p};\alpha}(\omega)|^2 &{}\geq \frac{1}{2} \abs{e^{-\pi\mu^2}}^2 = \frac{1}{2} e^{-2\pi\mu^2},
\end{align*}
which is independent on $p$ and $\omega$, as desired.\\
Due to the fact that the Gaussian is positive, it follows that also $\Gapm$ is positive as well. Hence
\[
\sum_{p}|\Gapm(\omega)|^2 \leq |\sum_{p}\Gapm(\omega)|^2.
\]
We rewrite the sum above as follows:
\[
\sum_p\Gapm(\omega) = \sum_{j}\hat{\varphi}(\omega -\mu j).
\]
Finally, since $\varphi$ belongs to the Wiener Space, we can write
\[
\sum_{p\in \Z}|\Gapm(\omega)|^2 \leq 
\pt{\textnormal{esssup} \sum_{j} \hat{\varphi}(\omega -\mu j) }^2
\leq \pt{ \pt{\frac{1}{\mu}+1} \norm{\varphi}_W}^2 < \infty,
\]
where $\norm{\cdot}_W$ is the Wiener norm. We have used a well known property of Wiener space, 
see e.g. \cite{GR01}[Lemma 6.1.2].

\end{proof}

\section{Higher Dimensions}
\label{sec:dimd}
We consider here the case $\alpha = 1$ and an arbitrary dimension. We define a (parabolic) phase space tiling and, for suitable window functions, we provide a frame of $L^2(\Rr^d)$. We follow the ideas of wave atoms proposed in \cite{MR2362408,MR2511758} and subsequently adapted to the Gaussian case by \cite{MR2728710}. For the sake of simplicity, we enlighten the notation used before by suppressing the parameter  $\alpha$.
 
As for the dimension $d=1$,  we begin with the painless case using a smooth and compactly supported window function. Moreover, we can define an explicit conjugate frame that leads to a reconstruction formula. Then we generalize the construction.
\subsection{Phase space partition}
Define the Cartesian coronae $C_p$ as follows:
\begin{align*}
C_0 &= [-1,1]^d,	\\
C_p &= \ptg{\omega = \pt{\omega_1,\ldots,\omega_d} \in \Rr^d\, :  \max_{1\leq i\leq d} |\omega_s| \in \left[\bpp, \beta\pt{p+1}\right)},\quad p\geq 1.
\end{align*}
Each corona is further partitioned in (open) boxes of side $\bpp=2^{p-1}$, precisely
\[
\Bapi = \prod_{s=1}^{d}\Big[\bpp\cdot \ell_s,\bpp\cdot(\ell_s + 1)\Big),
\]
where  $\ell_s=-2,-1,0,1$ and $\max_{s=1\ldots d}\pt{l_s+\frac{1}{2}}\notin C_0$, i.e. the centers are outside the inner corona. The indexes $\ell_s$ label every possible box inside the corona. It can be easily checked that the number of such boxes (or multi-indexes) is $2^{d}(2^d-1)$, for every $p\geq 1$. We also define, according to Section \ref{S:alfamod}, 
\begin{equation}
	\label{eq:Iapi}
		\Iapi = \Bapi \bigcap \Z^d,
\end{equation} 
see Figure \ref{FIG:PhSPTil}.\\
We generalize now the $1$-dimensional DOST system (cf. \eqref{eq:DOST_sys}).
\begin{Def}
Given the set $\Iapi$ (cf. \eqref{eq:Iapi}), consider
\begin{equation} \label{eq:DOST_mult}
    \Bpp(x) = \bpp^{-\tfrac{d}{2}}
                      \sum_{\eta\in\mu\Iapi}
                      e^{\pii \eta \pt{x- k\frac{\nu}{\bpp}}}
                      \varphi\pt{x- k\frac{\nu}{\bpp}}, 
\end{equation}
where $\gamma = (p,k,\ell) \in (\N,\Z^d,J)$, $J$ contains the admissible indexes $\ell$ described above. Then, we define the (multi-dimensional) DOST-system 
\[
MD\pt{\mu,\nu, \phi} = \ptg{\Bpp(x)}_{\gamma}.
\]
See Figure~\ref{F:Win2D} as an example of frame element both in time and frequency. Figure~\ref{fig:test} shows how the elements are localized in the set $\Iapi$ in frequency.
\end{Def}

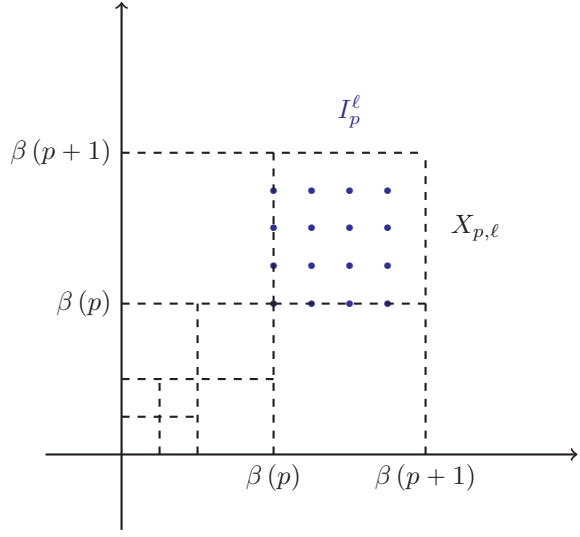
\begin{figure}
\begin{tikzpicture}[thick]
\draw[thick,->] (-1,0) -- (6,0) ;
\draw[thick,->] (0,-1) -- (0,6) ;

\draw[dashed,-] (.5,0) -- (.5,1) ;
\draw[dashed,-] (0,.5) -- (1,.5) ;

\draw[dashed,-] (1,0) -- (1,2) ;
\draw[dashed,-] (0,1) -- (2,1) ;

\node  at (2, 0) [below] {$\bpp$};
\node  at (4, 0) [below] {$\beta\pt{p+1}$};
\node  at (0, 2) [left] {$\bpp$};
\node  at (0, 4) [left] {$\beta\pt{p+1}$};

\node  at (2, 2)  {{\color{blue}\textbf{\huge.}}};
\node  at (2, 2.5)  {{\color{blue}\textbf{\huge.}}};
\node  at (2, 3)  {{\color{blue}\textbf{\huge.}}};
\node  at (2, 3.5)  {{\color{blue}\textbf{\huge.}}};
\node  at (2.5, 2)  {{\color{blue}\textbf{\huge.}}};
\node  at (2.5, 2.5)  {{\color{blue}\textbf{\huge.}}};
\node  at (2.5, 3)  {{\color{blue}\textbf{\huge.}}};
\node  at (2.5, 3.5)  {{\color{blue}\textbf{\huge.}}};
\node  at (3, 2)  {{\color{blue}\textbf{\huge.}}};
\node  at (3, 2.5)  {{\color{blue}\textbf{\huge.}}};
\node  at (3, 3)  {{\color{blue}\textbf{\huge.}}};
\node  at (3, 3.5)  {{\color{blue}\textbf{\huge.}}};
\node  at (3.5, 2)  {{\color{blue}\textbf{\huge.}}};
\node  at (3.5, 2.5)  {{\color{blue}\textbf{\huge.}}};
\node  at (3.5, 3)  {{\color{blue}\textbf{\huge.}}};
\node  at (3.5, 3.5)  {{\color{blue}\textbf{\huge.}}};

\draw[dashed,-] (2,0) -- (2,2) ;
\draw[dashed,-] (0,2) -- (2,2) ;

\draw[dashed,-] (4,0) -- (4,4) ;
\draw[dashed,-] (0,4) -- (4,4) ;

\draw[dashed,-] (2,2) -- (2,4) ;
\draw[dashed,-] (2,2) -- (4,2) ;

\node  at (3, 4.2) [above] {{\color{blue}\textbf{$\Iapi$}}};
\node  at (4.2, 3) [right] {$\Bapi$};
\end{tikzpicture}
\caption{2D phase space partitioning, and the set $\Iapi$.}
\label{FIG:PhSPTil}
\end{figure}

\subsection{Painless frame expansion}
As for the case of dimension $d=1,$ we start with compactly supported window functions. We adapt the definition of admissible window.
\begin{Def}\label{d:admd}
Let $\varphi\in \S(\R^d)$ such that $\mathrm{supp\,} \hat{\varphi}\subset [-L,L]^d$, for some $\mu>0$ and $0\leq \hat{\varphi}\leq 1$. We say that $(\varphi,\mu)$ is admissible if, given 

\begin{equation}
\label{eq:Gd}
\Gppm(\omega) = \sum_{\eta \in \mu\Iapi} \hat{\varphi}(\omega -\eta)
\end{equation}
it satisfies the following properties
\normalfont{
    \begin{enumerate}
        \item[1.] $ 0\leq \Gppm(\omega)  \leq C_1$, for some $C_1>0$.
        
        \item[2.] For every $\omega \in \Rr^d$  $|\{p,\ell:\Gppm(\omega)>0\}| \leq C_2$.
        \item[3.] There exists $C_3 >0$ such that  for every $\omega \in \Rr^d, \; \Gppm(\omega) > C_3$, for some $(p,\ell) \in \N\times J$. 
    \end{enumerate}
}
\end{Def}
Using this definition we can immediately obtain some important properties of $\Gppm$.
\begin{lem}
Let $(\varphi,\mu)$ be admissible, then 
\begin{align}
	\mathrm{Diam}\pt{\supp \Gppm }&\leq C_{L,\mu} \bpp, \qquad \forall p\in\N  \label{eq:supportd},
	\\
	 C_3^2 \leq &\sum_{p,\ell}|\Gppm(\omega)|^2 \leq C_1^2C_2, \label{eq:up_low_bundd}
\end{align}
where $C_1,C_2,C_3$ are defined above (cf. \ref{d:adm}) and $\mathrm{Diam}$ denotes the maximum distance between points of the set, i.e. the diameter.
\end{lem}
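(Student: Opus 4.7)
The plan is to carry out exactly the $d$-dimensional analogue of Lemma~\ref{L:3}, since both the statement and the hypotheses mirror the one-dimensional case. Each of the three conclusions follows almost directly from the definition of $\Gppm$ and the three admissibility properties of Definition~\ref{d:admd}, so no new ingredient is required beyond a careful bookkeeping of geometric constants in dimension $d$.

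For the diameter bound, I would start from the fact that if $\omega \in \supp\Gppm$ then there must exist $\eta \in \mu \Iapi$ with $\omega - \eta \in \supp\hat{\varphi} \subseteq [-L,L]^d$. Since $\Iapi \subseteq \Bapi$ and $\Bapi$ is by construction a product of intervals of length $\bpp$, the set $\mu \Iapi$ is contained in a cube of side $\mu\bpp$. Consequently $\supp\Gppm$ lies in a cube of side at most $\mu\bpp + 2L$, and hence has diameter at most $\sqrt{d}\pt{\mu\bpp + 2L}$. Because $\bpp \geq 1$ uniformly in $p$, this is bounded by $C_{L,\mu}\bpp$ with $C_{L,\mu}$ depending on $L$, $\mu$, $d$ but independent of $p$, as desired.

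For the two-sided bound on the quadratic sum, I would invoke the three admissibility conditions directly. The upper bound is immediate: property~(1) gives $\Gppm(\omega)^2 \leq C_1^2$ pointwise, while property~(2) guarantees that at most $C_2$ pairs $(p,\ell)$ contribute a nonzero term at any $\omega$; multiplying yields $\sum_{p,\ell}|\Gppm(\omega)|^2 \leq C_1^2 C_2$. For the lower bound, property~(3) furnishes, for each $\omega$, at least one pair $(\bar p,\bar\ell)$ with $\Phi^{\mu}_{\bar p;\bar\ell}(\omega) > C_3$; keeping only that term gives $\sum_{p,\ell}|\Gppm(\omega)|^2 \geq C_3^2$.

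I do not anticipate any real obstacle, since the argument is essentially a bookkeeping exercise with the definitions. The only place where dimension enters nontrivially is the diameter estimate, where the factor $\sqrt{d}$ arises from passing from the side of a cube to its diameter; this is absorbed into $C_{L,\mu}$. The more delicate multidimensional analogues of Lemmata~\ref{L:decay_G}, \ref{L:unif_Sum} and \ref{lem:limzero}, which will be needed for the general (non-painless) frame theorem in dimension $d$, do not play any role here.
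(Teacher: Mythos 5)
Your proposal is correct and follows exactly the route the paper intends: the paper omits the proof, stating only that it is analogous to the one-dimensional Lemma~\ref{L:3}, and your argument is precisely that analogue (cube of side $\mu\bpp+2L$ containing the support, with the $\sqrt{d}$ diameter factor and $\bpp\geq 1$ absorbed into $C_{L,\mu}$; the two-sided bound read off directly from the three admissibility properties). Nothing is missing.
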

\begin{lem}
Let $\varphi, \psi$ in $\Lln$. For each $f\in \Lln$, set 
\[
\pt{S^{\nu;\mu}_{\varphi,\psi} f} (x) = \sum_{\gamma}\langle f,\Bpp \rangle \psi_\gamma(x), x\in \R^d, \gamma\in \pt{\N, \Z^d, J}.  
\]	
Then 
\begin{equation}
\label{eq:Walnutd}
\F_{x\to \omega}\pt{\PFamnfg f} (\omega)	= \frac{1}{\nu^{d}}\sum_{p,\ell}\sum_{k}T_{k\frac{\bap}{\nu}}\pt{\hat{f}\pt{\omega} }\overline{\Gppm}\pt{\omega-k\frac{\ba{p}}{\nu}}\Psi^{\mu}_{p,;\ell}(\omega), 
\end{equation}
where 
\[
\Psi^{\mu}_{p,;\ell}(\omega) = \sum_{\eta\in\mu\Iapi }
                      		\hat{\psi}(\omega- \eta).
\]
\end{lem}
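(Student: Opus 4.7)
The strategy is to mirror the $d=1$ argument used for Lemma \ref{L:2} step by step, the only change being that the single scalar $\bap/\nu$ is replaced by $\bpp/\nu$ and the lattice $(\nu/\bap)\Z$ by $(\nu/\bpp)\Z^d$; both are scalar dilations of $\Z^d$, so the Poisson formula \eqref{eq:poiss} applies without modification.

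The first step is to identify the Fourier transform of a single frame element. Pulling $\F$ inside the finite sum defining $\Bpp$ in \eqref{eq:DOST_mult} and using the standard intertwining relation $\widehat{T_a M_\eta \varphi}=M_{-a}T_\eta\hat\varphi$ immediately gives
\[
\widehat{\Bpp}(\omega)=\bpp^{-d/2}\,e^{-\pii\,\omega\cdot k\nu/\bpp}\,\Gppm(\omega),
\]
and the analogous identity for $\widehat{\psi_\gamma}$ with $\Psi^{\mu}_{p;\ell}$ in place of $\Gppm$.

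The second step is to substitute these expressions into $\F(\PFamnfg f)$, relabel $k\mapsto -k$ to flip the signs in the exponentials (as done in the $d=1$ proof), and factor out of the $k$-sum everything that does not depend on $k$. For each fixed $(p,\ell)$ the inner sum then reads
\[
\bpp^{-d}\sum_{k\in\Z^d}\F\pt{\hat f\,\overline{\Gppm}}\pt{k\nu/\bpp}\,e^{\pii\,\omega\cdot k\nu/\bpp},
\]
which is precisely the right-hand side of the $d$-dimensional Poisson formula \eqref{eq:poiss} applied with $\phi=\hat f\,\overline{\Gppm}$ and sampling rate $\gamma=\bpp/\nu$. The Poisson identity produces a factor $(\bpp/\nu)^d$ that combines with the $\bpp^{-d}$ outside the sum to leave only $\nu^{-d}$, and it converts the sum into $\sum_{k}(\hat f\,\overline{\Gppm})(\omega-k\bpp/\nu)=\sum_k T_{k\bpp/\nu}\hat f(\omega)\,T_{k\bpp/\nu}\overline{\Gppm}(\omega)$, which is exactly what \eqref{eq:Walnutd} asserts once the factor $\Psi^{\mu}_{p;\ell}(\omega)$ is reinserted outside the $k$-sum.

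The main subtlety---really only a matter of bookkeeping---is keeping track of the $d$ powers of $\bpp$ and $\nu$: in the one-dimensional case the product $\bap^{-1}\cdot(\bap/\nu)=\nu^{-1}$, whereas in dimension $d$ the product is $\bpp^{-d}\cdot(\bpp/\nu)^d=\nu^{-d}$, which accounts for the correct constant in \eqref{eq:Walnutd}. As noted in the remark following Lemma \ref{L:2}, these equalities are formal at this stage; they become bona fide $L^2(\R^d)$ identities once the multidimensional analog of the frame bound is established, exactly as in the one-dimensional case.
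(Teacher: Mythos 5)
Your proposal is correct and follows exactly the route the paper intends: the paper gives no separate proof of this lemma, stating only that it is ``analogous to Lemmata~\ref{L:2}-\ref{L:3}'', and your argument is precisely that analogue, with the Poisson formula \eqref{eq:poiss} applied at sampling rate $\gamma=\bpp/\nu$ on $\Z^d$ and the power count $\bpp^{-d}\cdot(\bpp/\nu)^d=\nu^{-d}$ correctly producing the constant in \eqref{eq:Walnutd}.
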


The proofs of these lemmata are analogous to Lemmata~\ref{L:2}-\ref{L:3}.

\begin{thm}
Consider $(\phi,\mu)$ being admissible. Then there exists $\nu>0$ such that the DOST-system 
\[
MD\pt{\mu,\nu, \phi} 
\]
is a frame for $\Lln$. Precisely, there exist $A,B>0$ such that for all $f\in \Lln$
\begin{equation}
A\|f\|_2^2 \leq \sum_{\gamma}\vert\langle f,\Bpp(x)\rangle\vert^2 \leq B\|f\|_2^2.
\end{equation}
\end{thm}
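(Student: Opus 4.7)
The plan is to mimic the argument of Theorem \ref{thm:3}, but now in dimension $d$, exploiting the Walnut-like formula \eqref{eq:Walnutd} together with the admissibility bounds \eqref{eq:supportd} and \eqref{eq:up_low_bundd}. First I would reduce the frame inequality to
\[
  A \|f\|_2^2 \leq \Lprod{S^{\mu,\nu}_{\varphi,\varphi} f}{f} \leq B \|f\|_2^2,
\]
which by Plancherel is the same as controlling $\Lprod{\F\pt{S^{\mu,\nu}_{\varphi,\varphi} f}}{\hat f}$ from above and below. Specializing \eqref{eq:Walnutd} to $\psi=\varphi$ and splitting the $k$-sum into the contribution with $k=0$ and the rest gives
\[
  \Lprod{\F\pt{S^{\mu,\nu}_{\varphi,\varphi} f}}{\hat f}
   = \frac{1}{\nu^d}\Lprod{\sum_{p,\ell}\abs{\Gppm}^2 \hat f}{\hat f}
   + \frac{1}{\nu^d}\Lprod{\sum_{p,\ell}\sum_{k\neq 0} T_{k\frac{\bpp}{\nu}}\hat f \cdot T_{k\frac{\bpp}{\nu}}\overline{\Gppm}\cdot \Gppm}{\hat f}.
\]

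Next I would handle the diagonal term ($k=0$) using the two-sided estimate \eqref{eq:up_low_bundd}: it yields exactly
\[
  \frac{C_3^2}{\nu^d}\|f\|_2^2 \leq \frac{1}{\nu^d}\Lprod{\sum_{p,\ell}\abs{\Gppm}^2 \hat f}{\hat f} \leq \frac{C_1^2 C_2}{\nu^d}\|f\|_2^2,
\]
which already supplies the desired upper and lower frame constants up to the off-diagonal error.

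The main step is then to show that the off-diagonal term vanishes identically once $\nu$ is chosen small enough, which is the higher-dimensional analogue of the "painless" cancellation used for Theorem \ref{thm:3}. By \eqref{eq:supportd}, $\mathrm{Diam}(\supp \Gppm) \leq C_{L,\mu}\bpp$ uniformly in $p,\ell$. Hence for every $k\in\Z^d\setminus\{0\}$ and every $p,\ell$,
\[
  \mathrm{dist}\bigl(\supp T_{k\bpp/\nu}\overline{\Gppm},\, \supp \Gppm\bigr) \;\geq\; \frac{|k|\bpp}{\nu} - C_{L,\mu}\bpp,
\]
so picking $\nu < \nu_0 := 1/C_{L,\mu}$ forces disjoint supports simultaneously for all $p,\ell$ and all $k\neq 0$. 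The inner products in the off-diagonal sum therefore vanish term by term.

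Putting this together yields $A\|f\|_2^2 \leq \Lprod{S^{\mu,\nu}_{\varphi,\varphi}f}{f} \leq B\|f\|_2^2$ with $A = C_3^2/\nu^d$ and $B = C_1^2 C_2/\nu^d$, proving the frame inequality. The only subtlety — and the part that required admissibility to be formulated in terms of \emph{diameter} of the support rather than an interval width — is that in dimension $d$ the support of $\Gppm$ is a union of translated copies of $\supp\hat\varphi$ arranged over $\mu \Iapi$, and one needs the uniform-in-$(p,\ell)$ scale $C_{L,\mu}\bpp$ in order to pick a single threshold $\nu_0$ that works simultaneously for all coronae. This is guaranteed by the construction of $\Iapi$ as a subset of a cube of side $\bpp$ together with the compact support of $\hat\varphi$, so no further obstacle arises.
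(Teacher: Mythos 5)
Your proposal is correct and follows essentially the same route as the paper, which simply states that the proof is analogous to the one-dimensional painless case (Theorem \ref{thm:3}): Walnut representation, the two-sided bound \eqref{eq:up_low_bundd} for the $k=0$ term, and disjointness of supports via \eqref{eq:supportd} to kill the $k\neq 0$ terms for $\nu$ small. The only point worth recording explicitly is the one you already noted, namely that the diameter bound $C_{L,\mu}\bpp$ is uniform in $(p,\ell)$, so a single threshold $\nu_0=1/C_{L,\mu}$ works for all coronae.
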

The proof is analogous to the one made in dimension $d=1$.
\subsection{Conjugate Filter}
Set 
\begin{equation}\label{eq:filterd}
\Omega_{p;\ell}^{\mu;\nu}(\omega) = \nu^d\frac{\overline{\Gppm(\omega)}}{\displaystyle\sum_{p \in \Z}
                      \abs{\Gppm(\omega)}^2},
\end{equation}

then

\begin{equation}\label{eq:diprima}
\sum_{p,\ell}\Omega_{p;\ell}^{\mu;\nu}(\omega)\Gppm(\omega) = \nu^d.
\end{equation}

\begin{cor}
Consider the functions $\Omega_{p;\ell}^{\mu;\nu}$ defined above, set
\[
\widehat{\Psi}_{\gamma}\pt{\omega}= \bpp^{-\tfrac{d}{2}}e^{-\pii\omega
                        k\frac{\nu}{2^{p-1}}
                        }
                        \overline{\Omega_{p;\ell}^{\mu;\nu}}(\omega), \qquad \gamma = (p,k,\ell) \in (\N,\Z^d,J), \: \omega \in \R^d.
\]
Then, for any $f\in \Lln$
\begin{equation}
\label{eq:rec}	
f(x) = \sum_{\gamma} \langle f, \Psi_{\gamma}\rangle \Bpp(x) .
\end{equation}
\end{cor}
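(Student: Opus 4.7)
The plan is to mirror the argument of Corollary \ref{C:1} in the multi-dimensional setting, using the multi-dimensional Walnut representation \eqref{eq:Walnutd} and the (multi-dimensional) Poisson summation formula \eqref{eq:poiss}. First, I would take the Fourier transform of the right-hand side of \eqref{eq:rec} and, using Plancherel to compute the inner product $\langle f, \Psi_\gamma \rangle = \langle \hat f, \widehat{\Psi}_\gamma\rangle$, expand
\begin{align*}
  \sum_\gamma \langle f, \Psi_\gamma\rangle \widehat{\varphi}_\gamma(\omega)
  = \sum_{p,\ell}\sum_{k\in\Z^d}
    \Bigl(\bpp^{-d}\,\Lprod{\hat f(\cdot)}{e^{-\pii(\cdot)k\nu/\bpp}\,\overline{\Omega^{\mu;\nu}_{p;\ell}(\cdot)}}\Bigr)
    e^{\pii \omega k\nu/\bpp}\,\Gppm(\omega).
\end{align*}
Recognising the inner product as a value of the Fourier transform $\F\bigl(\hat f\,\Omega^{\mu;\nu}_{p;\ell}\bigr)$ sampled at the lattice $k\nu/\bpp$, I would then apply the Poisson summation formula \eqref{eq:poiss} with $\gamma=\bpp/\nu$ and $\phi=\hat f\,\Omega^{\mu;\nu}_{p;\ell}$, so that the sum over $k$ becomes a periodised sum of translates:
\begin{align*}
  \sum_\gamma \langle f, \Psi_\gamma\rangle \widehat{\varphi}_\gamma(\omega)
  = \nu^{-d}\sum_{p,\ell}\sum_{k\in\Z^d}\bigl(\hat f\,\Omega^{\mu;\nu}_{p;\ell}\bigr)\!\left(\omega+k\tfrac{\bpp}{\nu}\right)\Gppm(\omega).
\end{align*}

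Next, exactly as in the one-dimensional painless case (Theorem \ref{thm:3}), I would exploit the admissibility hypothesis: by \eqref{eq:supportd}, $\supp\Gppm$ and $\supp\Omega^{\mu;\nu}_{p;\ell}$ are both contained in a set of diameter at most $C_{L,\mu}\bpp$. Consequently, for $\nu$ sufficiently small (precisely, $\nu<1/C_{L,\mu}$), the supports of $\Gppm(\omega)$ and of $\Omega^{\mu;\nu}_{p;\ell}(\omega+k\bpp/\nu)$ are disjoint for every $k\in\Z^d\setminus\{0\}$ and every admissible $(p,\ell)$. Thus all the $k\neq 0$ contributions vanish pointwise, leaving
\begin{align*}
  \sum_\gamma\langle f,\Psi_\gamma\rangle\widehat{\varphi}_\gamma(\omega)
  = \nu^{-d}\,\hat f(\omega)\sum_{p,\ell}\Omega^{\mu;\nu}_{p;\ell}(\omega)\Gppm(\omega).
\end{align*}

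Invoking the partition-of-unity identity \eqref{eq:diprima}, the sum on the right equals $\nu^d$, so the whole expression collapses to $\hat f(\omega)$. Applying the inverse Fourier transform then yields \eqref{eq:rec}. The main obstacle is the same as in the one-dimensional proof, namely controlling the off-diagonal $k\neq 0$ terms arising from the Poisson summation; in the painless setting this is handled cleanly by the compact-support hypothesis together with the support estimate \eqref{eq:supportd}, and the fact that admissibility forces only finitely many pairs $(p,\ell)$ to contribute at each $\omega$ (bound \eqref{eq:up_low_bundd}) ensures that all manipulations above are justified pointwise and that the denominator in the definition \eqref{eq:filterd} is bounded and bounded away from zero on the support of the numerator.
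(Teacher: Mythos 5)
Your proposal is correct and follows essentially the same route as the paper: expand the sum via Plancherel, recognise the coefficients as samples of $\F\bigl(\hat f\,\Omega^{\mu;\nu}_{p;\ell}\bigr)$, apply Poisson summation with $\gamma=\bpp/\nu$, discard the $k\neq 0$ terms by disjointness of supports for $\nu$ small, and conclude with the partition-of-unity identity \eqref{eq:diprima} and the inverse Fourier transform. The paper's own proof is just the one-dimensional argument of Corollary \ref{C:1} repeated with the normalization constant $\nu^{d}$, which is exactly what you have done.
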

\begin{proof}
We briefly review the proof of Corollary \ref{C:1}. Nothing changes here a part from the normalization constant.
 \begin{align*}
\sum_{\gamma}& \langle \hat{f}, \widehat{\Psi}_{\gamma}\rangle \widehat{\varphi}_{\gamma}(x)
\\
		&{}=\sum_{\gamma}\pt{\bpp^{-\tfrac{d}{2}} \Lprod{\hat{f}(\omega)}{e^{-\pii\omega
                        k\frac{\nu}{\bpp}
                        }\overline{\Omega_{p;\ell}^{\mu;\nu}}(\omega)}}
			\pt{\bpp^{-\tfrac{d}{2}}e^{-\pii\omega
                        k\frac{\nu}{\bpp}
                        }\Gppm(\omega)}                      
\\
		&{}=\frac{1}{\nu^d}\sum_{p,\ell}\pt{\sum_{k\in \Z^d}\pt{\bpp^{-d}  \Lprod{\hat{f}(\omega)}{e^{-\pii\omega
                        k\frac{\nu}{\bpp}
                        }\overline{\Omega_{p;\ell}^{\mu;\nu}}(\omega)}} e^{-\pii\omega
                        k\frac{\nu}{\bpp}
                        }}\Gppm(\omega).
\end{align*}
Then, by Poisson formula (cf. \eqref{eq:poiss}) and \eqref{eq:diprima}, supposing $\nu_0$ small enough, we have for all $0<\nu<\nu_0$                        
\[                                              
	 \hat{f}(\omega)\nu^{-d}\sum_{p,\ell}\Omega_{p;\ell}^{\mu;\nu}\Gppm(\omega) = \hat{f}(\omega),
\]
as desired.
\end{proof}

\subsection{DOST frames, general construction}
We state and prove that we can build up a DOST frame with similar hypothesis to the ones given in Section \ref{sec:dim1}.
\begin{thm}\label{T:FRmult}
Consider a function $\varphi\in L^2(\Rn)\cap L^1\pt{\R^d}$. 
Let $\Gppm(\omega)$ (cf. \eqref{eq:Gd})
\begin{align*}
  \Gppm(\omega)= \sum_{\eta \in  \mu\Iapi } \hat{\varphi}(\omega -\eta).
\end{align*}
Suppose that $\mu>0$ is chosen so that 
\[
a \leq \sum_{p}|\Gppm(\omega)|^2 \leq b, \qquad\forall \omega \in \R,
\]
for suitable constants $a,b>0$.
Moreover, assume that the window $\varphi$ satisfies
\begin{equation}
\label{eq:decay_gd}
|\hat{\varphi}(\omega)| \leq \frac{C_N}{(1+|\omega|)^{N}},
\end{equation}
for some $N>2d$.

Then there exists $A,B>0$ and $\nu_0>0$, such that for all $f \in \Lln$
\[  
A\|f\|_2^2 \leq |\langle S^{\mu,\nu}_{g,g}f,f\rangle| \leq B\|f\|_2^2
\]
for some $0<\nu<\nu_0$.
\end{thm}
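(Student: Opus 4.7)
The plan is to follow the same blueprint used for Theorem \ref{T:Frame_prop} in dimension one, replacing each scalar quantity by its $d$-dimensional counterpart and adapting the decay exponents accordingly. First, starting from the Walnut-like representation \eqref{eq:Walnutd} with $\psi = \varphi$, I would test $\langle S^{\mu,\nu}_{\varphi,\varphi}f, f\rangle = \langle \mathcal{F}(S^{\mu,\nu}_{\varphi,\varphi}f), \hat f\rangle$ and split the double sum over $(p,\ell,k)$ into the diagonal part $k=0$ and the off-diagonal remainder. The diagonal part equals $\nu^{-d}\langle \sum_{p,\ell} |\Phi^{\mu}_{p;\ell}|^2 \hat f, \hat f\rangle$, which by hypothesis is comparable to $\nu^{-d}\|f\|_2^2$ with constants $a$ and $b$. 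Thus the frame property reduces to showing that the off-diagonal contribution can be made smaller than $a/2$ (in the uniform operator sense) by choosing $\nu$ small enough.

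The heart of the proof is the analogue of Lemma \ref{lem:limzero}, namely
\[
\lim_{\nu\to 0} \sum_{p,\ell} \sum_{k\in \Z^d\setminus\{0\}} \bigl\| \overline{\Phi^{\mu}_{p;\ell}}\bigl(\omega - k\tfrac{\beta(p)}{\nu}\bigr) \Phi^{\mu}_{p;\ell}(\omega) \bigr\|_\infty = 0.
\]
To prove it, I first generalize Lemma \ref{L:decay_G}: using \eqref{eq:decay_gd} and summing the $d$-dimensional lattice tail via an integral comparison, one obtains
\[
|\Phi^{\mu}_{p;\ell}(\omega)| \leq \frac{C_{N,\mu,d}}{(1+\mathrm{dist}(\omega, \mu \Iapi))^{N-d}},
\]
the loss of $d$ powers arising from the $d$-fold lattice sum, which is why we must require $N > 2d$ (so that $N - d > d \geq 1$ for both decay and for the subsequent cross summability). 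Next, I generalize Lemma \ref{L:unif_Sum}: for fixed $\omega$ there is a unique corona $C_{\bar p}$ containing it and a bounded number of boxes $B_{\bar p, \bar\ell}$ adjacent to $\omega$, and the boxes $B_{p,\ell}$ with $p\neq \bar p$ are separated from $\omega$ by distances of order $\beta(p)\asymp 2^p$, so $\sum_{p,\ell}(1+\mathrm{dist}(\omega,\mu I^{\ell}_{p}))^{-M}\leq C_d$ uniformly in $\omega$, provided $M > d$.

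With these two technical bounds in place, the argument of Lemma \ref{lem:limzero} transfers essentially verbatim: decompose $\Phi^{\mu}_{p;\ell} = {}_1\Phi^\mu_{p;\ell} + {}_2\Phi^\mu_{p;\ell}$ according to whether $\omega$ is inside or outside $\mu I^{\ell}_{p}$; the purely ``inside'' cross term vanishes once $\mu\nu<1$ by support disjointness; the mixed and ``outside-outside'' terms are handled with the decay estimate above, exploiting that for $k\neq 0$ one has $\mathrm{dist}(\omega-k\beta(p)/\nu, \mu I^{\ell}_{p}) \geq (|k|/\nu - \mu)\beta(p)$ whenever $\omega \in \mu I^{\ell}_p$. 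The tail estimate then produces a factor of $\nu^{\epsilon}$ (from splitting one power of the decay) times a sum that is uniformly bounded by the generalized Lemma \ref{L:unif_Sum}, yielding a bound of the form $C\nu^{\epsilon}\|f\|_2^2$ for some $\epsilon > 0$.

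The main obstacle is purely combinatorial-geometric: managing the corona/box structure of the multidimensional partition inside the decay and summation arguments. In dimension one the set $I_{\alpha;p}$ is a single interval, while here $\Iapi$ is one of $2^d(2^d-1)$ cubes inside the corona $C_p$, so one must carefully verify that (i) for each $\omega$ only $O(1)$ pairs $(p,\ell)$ give $\omega \in \mu I^{\ell}_p$, and (ii) the separation $\beta(p)\asymp 2^p$ between coronae still dominates the lattice-tail sum once we raise the exponent from $N>2$ to $N>2d$. Once these geometric facts are in hand, the final string of inequalities is identical to the proof of Theorem \ref{T:Frame_prop}: pick $\nu_0$ so small that the off-diagonal total is at most $a/2$, whence
\[
\tfrac{a}{2\nu^d}\|f\|_2^2 \leq |\langle S^{\mu,\nu}_{\varphi,\varphi}f,f\rangle| \leq \tfrac{1}{\nu^d}\bigl(b+\tfrac{a}{2}\bigr)\|f\|_2^2,
\]
which is the desired frame inequality.
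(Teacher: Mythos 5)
Your proposal follows essentially the same route as the paper: the Walnut representation \eqref{eq:Walnutd}, the split into the $k=0$ part (controlled by the hypothesis $a\leq\sum_{p,\ell}|\Gppm|^2\leq b$) and the off-diagonal remainder, and the three supporting lemmata — the decay estimate with exponent $N-d$, the uniform summability over $(p,\ell)$ for exponents larger than $d$, and the vanishing of the off-diagonal sum as $\nu\to 0$ via the decomposition $\Gppm={}_1\Gppm+{}_2\Gppm$ — are exactly Lemmata \ref{L:decay_Gd}, \ref{L:unif_Sum_d} and \ref{lem:limzerod} of the paper, and your identification of $N>2d$ as the condition making $N-d>d$ work for both decay and summability matches the paper's reasoning. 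The only cosmetic difference is the normalization $\nu^{-d}$ versus $\nu^{-1}$ in the final display, which does not affect the existence of the frame bounds.
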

\subsection{Preparatory Lemmata}
We recall the same result proved in dimension $d=1$.
\begin{lem}\label{L:decay_Gd}
Let $\varphi\in \Lln\cap L^1(\Rr^d)$ such that \eqref{eq:decay_gd} holds true. Then
\begin{align}
  \label{eq:Dec_Gd}
  |\Gppm(\omega)| \leq \frac{C_N}{\pt{1 + \dist{\omega}{\mu \Iapi}}^{N-d}}.
\end{align}
\end{lem}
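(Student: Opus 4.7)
The plan is to mirror the argument of Lemma \ref{L:decay_G} used in the one-dimensional case, adjusted for the $d$-dimensional geometry of the lattice $\Iapi \subset \Z^d$. First I would start from the defining expression
\[
|\Gppm(\omega)| \leq \sum_{\eta \in \Iapi} |\hat{\varphi}(\omega-\mu\eta)| \leq \sum_{\eta \in \Iapi} \frac{C_N}{(1+|\omega-\mu\eta|)^N},
\]
and factor out the decaying prefactor $(1+\dist{\omega}{\mu\Iapi})^{-(N-d)}$, reducing the statement to a uniform bound on
\[
\sum_{\eta \in \Iapi} \frac{(1+\dist{\omega}{\mu\Iapi})^{N-d}}{(1+|\omega-\mu\eta|)^N}.
\]

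I would split into two cases exactly as in the one-dimensional proof. When $\omega \in \mu\Iapi$ the distance term vanishes and the sum reduces to $\sum_{\eta \in \Iapi}(1+|\omega-\mu\eta|)^{-N}$; since $\Iapi$ consists of integer points in a $d$-dimensional box, this is dominated term-by-term by a sum over $\Z^d$ of the form $\sum_{j \in \Z^d}(\mu|j|+1)^{-N}$, which converges because $N>2d>d$.

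When $\omega \notin \mu\Iapi$, denote $\delta = \dist{\omega}{\mu\Iapi}$ and let $\eta^{\ast}$ be a closest point of $\Iapi$ to $\omega/\mu$. Reindexing by the lattice displacement $\zeta = \eta-\eta^{\ast}$, the triangle inequality gives $|\omega-\mu\eta| \geq \delta + c\mu|\zeta|$ for a suitable absolute $c>0$ once $\zeta$ points away from the box. The sum is then controlled by the integral
\[
\int_0^{\infty} \frac{r^{d-1}}{(1+\delta+\mu r)^N}\,\ud r,
\]
since the number of lattice points in a shell of radius $r$ around $\eta^{\ast}$ is $O(r^{d-1})$. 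Writing $(1+\delta+\mu r)^{-N} = (1+\delta)^{-N}(1+\mu r/(1+\delta))^{-N}$ and rescaling $u = \mu r/(1+\delta)$ converts the integral into
\[
\frac{(1+\delta)^{d-N}}{\mu^d}\int_0^{\infty}\frac{u^{d-1}}{(1+u)^N}\,\ud u,
\]
where the remaining integral is finite because $N > d$. This yields the desired bound $|\Gppm(\omega)| \leq C_{N,\mu,d}(1+\delta)^{-(N-d)}$.

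I do not foresee a real obstacle: the argument is essentially the one-dimensional one with a $d$-dimensional volume factor, which is exactly what forces the exponent $N-d$ in the conclusion (replacing the $N-1$ of Lemma \ref{L:decay_G}) and which in turn explains the hypothesis $N>2d$ in Theorem \ref{T:FRmult} (needed later so that $N-d > d$ to guarantee summability against the lattice of coronae in the analogue of Lemmata \ref{L:unif_Sum} and \ref{lem:limzero}). The only care needed is verifying that $\Iapi$, even though it is a lattice in a box and not a full lattice, supports the same shell-counting estimate uniformly in $p$ and $\ell$; this follows because $\Iapi \subset \Z^d$ and we are bounding from above by the full $\Z^d$ sum.
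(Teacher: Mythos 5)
Your argument is correct and is essentially the paper's own approach: the paper gives no separate proof of this lemma, deferring to the one-dimensional Lemma \ref{L:decay_G}, and your proof is exactly that argument with the $d$-dimensional shell count $O(r^{d-1})$ producing the exponent $N-d$ in place of $N-1$. The only loose phrase, ``$|\omega-\mu\eta|\geq \delta+c\mu|\zeta|$ once $\zeta$ points away from the box'', is in fact unconditional: from $|\omega-\mu\eta|\geq\delta$ and $|\omega-\mu\eta|\geq\mu|\zeta|-\delta$ one gets $|\omega-\mu\eta|\geq\tfrac{1}{4}\pt{\delta+\mu|\zeta|}$ for every $\eta\in\Iapi$, after which your integral comparison goes through verbatim.
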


\begin{lem}\label{L:unif_Sum_d}
Let $\omega \in \Rr^d$ and $N>d$. Then 
\[
\sum_{p,\ell} \frac{1}{\pt{1 + \dist{\omega}{\mu\Iapi}}^{N} }\leq C
\]
and the constant is independent on $\omega$.
\end{lem}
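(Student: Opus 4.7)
The plan is to transplant the one-dimensional argument of Lemma \ref{L:unif_Sum} to the cartesian-corona setting. Two structural facts make this work: each corona $C_p$ contains at most $K_d := 2^d(2^d-1)$ boxes $\Bapi$, independently of $p$; and the coronae are nested with geometric scaling $\bpp = 2^{p-1}$. Given $\omega \in \Rr^d$, I would first identify the unique $\bar p \in \N$ such that $\omega \in \mu C_{\bar p}$, and then split the sum $\sum_{p,\ell}$ according to whether $p$ is near or far from $\bar p$.

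For the near range $p \in \ptg{\bar p - 1, \bar p, \bar p + 1}$---where a neighbouring box may touch the one containing $\omega$, so that $\dist{\omega}{\mu \Iapi} = 0$ is possible---there are at most $3 K_d$ terms, each trivially bounded by $1$. For the far ranges, the key is the $\ell^\infty$-separation of coronae: for $p \geq \bar p + 2$ and any $y \in \mu \Bapi \supset \mu \Iapi$ one has $\max_s |y_s| \geq \mu \bpp$ while $\max_s |\omega_s| < \mu \beta(\bar p + 1)$, so that, since the Euclidean norm dominates the $\ell^\infty$-norm,
\[
\dist{\omega}{\mu \Iapi} \geq \mu \pt{\bpp - \beta(\bar p + 1)} \geq \mu \cdot 2^{p-2}.
\]
A symmetric argument for $p \leq \bar p - 2$ yields $\dist{\omega}{\mu \Iapi} \geq \mu \pt{\bppbar - \beta(p+1)} \geq \mu \cdot 2^{\bar p - 2}$.

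Summing over the at most $K_d$ boxes at each level, the upper-range contribution is bounded by the convergent geometric-type series
\[
K_d \sum_{p \geq \bar p + 2} \pt{1 + \mu \cdot 2^{p-2}}^{-N} \leq K_d \sum_{j \geq 2} \pt{1 + \mu \cdot 2^{j-2}}^{-N} < \infty,
\]
independently of $\bar p \geq 0$; the lower-range contribution is $K_d (\bar p - 1) \pt{1 + \mu \cdot 2^{\bar p - 2}}^{-N}$, uniformly bounded in $\bar p$ because exponential decay dominates linear growth. Adding the three pieces gives a constant independent of $\omega$, as required. No serious obstacle is anticipated: this is a direct lift of the one-dimensional reasoning, with the combinatorial factor $K_d$ absorbed into constants, and the exponential scaling of $\bpp$ ensures that even $N > 0$ would suffice in isolation; the condition $N > d$ is inherited from---and consistent with---the polynomial decay rate $N-d$ of $\Gppm$ established in Lemma \ref{L:decay_Gd}.
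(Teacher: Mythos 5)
Your argument is correct and is essentially the paper's own approach: the paper states this lemma without a separate proof, deferring to the one-dimensional Lemma \ref{L:unif_Sum}, and your proof is precisely that argument lifted to the corona setting (unique $\bar p$ with $\omega\in\mu C_{\bar p}$, trivial bound on the $O(1)$ nearby indices, geometric separation of the remaining coronae), with the two genuinely $d$-dimensional ingredients---the uniform count $2^d(2^d-1)$ of boxes per corona and the $\ell^\infty$-to-Euclidean distance comparison---handled correctly. Your closing observation that the exponential growth of $\bpp$ (since $\alpha=1$ here) makes any $N>0$ suffice, the hypothesis $N>d$ being inherited from Lemma \ref{L:decay_Gd}, is also accurate.
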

\begin{lem}
  \label{lem:limzerod}
  Let $\Gppm$ as before then
  \[
    \lim_{\nu\to 0}  \sum_{p,\ell} \sum_{k\in \Z^d\setminus\{0\}}  \norm{\overline{\Gppm}\pt{\omega-k\frac{\bpp}{\nu}}\Gppm(\omega)}_{\infty}=0.
  \]
\end{lem}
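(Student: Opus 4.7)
The proof proceeds along exactly the same lines as Lemma \ref{lem:limzero}, only with the dimension-dependent exponents adjusted. The plan is to decompose $\Gppm = \phantom{.}_1\Gppm + \phantom{.}_2\Gppm$, where $\phantom{.}_1\Gppm := \chi_{\mu\Iapi}\Gppm$ is the restriction to the box $\mu\Iapi$ and $\phantom{.}_2\Gppm := \Gppm - \phantom{.}_1\Gppm$ is the tail part. Expanding the product $\overline{\Gppm}(\omega - k\bpp/\nu)\Gppm(\omega)$ yields four cross terms. For $\mu\nu < 1$, the shift $k\bpp/\nu$ with $k \neq 0$ separates $\mu\Iapi$ from itself in at least one coordinate, so the core--core term vanishes identically.

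For the mixed terms I would use the decay estimate from Lemma \ref{L:decay_Gd}, namely $|\phantom{.}_2\Gppm(\omega)| \leq C_{N,\mu}(1+\dist{\omega}{\mu\Iapi})^{-(N-d)}$. For any $\omega \in \R^d$ there is a unique pair $(\bar p,\bar\ell)$ with $\omega\in \mu I^{\bar\ell}_{\bar p}$, and likewise for each $(p,\ell)\neq(\bar p,\bar\ell)$ at most one shift $k_{p,\ell}\bpp/\nu$ can carry $\omega$ back inside $\mu\Iapi$. Combining the separation estimate $\dist{\omega - k\bpp/\nu}{\mu\Iapi}\geq (|k|/\nu - \mu)\bpp$ valid on $\omega\in\mu\Iapi$ with the $\epsilon$-splitting $(1+\dist{\omega}{\mu\Iapi})^{-(N-d)} = (1+\dist{\omega}{\mu\Iapi})^{-(N-d-\epsilon)}(1+\dist{\omega}{\mu\Iapi})^{-\epsilon}$, and applying Lemma \ref{L:unif_Sum_d} to sum over $(p,\ell)$, yields rates of order $\nu^{N-d}$ and $\nu^{\epsilon}$ respectively for the tail--core and core--tail terms.

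The tail--tail term is handled exactly as in the one-dimensional argument, by introducing the auxiliary set $F^\omega_{p,\ell} = \{k\in\Z^d\setminus\{0\} : 0 < \dist{\omega - k\bpp/\nu}{\mu\Iapi} < \bpp/(2\nu)\}$, which has cardinality bounded by a constant depending only on $d$, uniformly in $\omega$, $p$, $\ell$ (for $\mu\nu<1$). For $k\notin F^\omega_{p,\ell}$ a reindexing gives $\dist{\omega - j\bpp/\nu}{\mu\Iapi}\geq |j|\bpp/\nu$, and the usual combination of Lemmata \ref{L:decay_Gd} and \ref{L:unif_Sum_d} yields a $\nu^{N-d}$-rate bound. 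For $k\in F^\omega_{p,\ell}$, setting $\mu\nu = 1-\delta$ and splitting into the two subcases $\dist{\omega - k\bpp/\nu}{\mu\Iapi}$ less than or greater than $\delta\bpp/(2\nu)$ produces a $\nu^{\epsilon}$-rate bound via the same $\epsilon$-splitting trick.

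The main obstacle is, once again, the tail--tail term: the proof requires that after the $\epsilon$-splitting the remaining exponent $N - d - \epsilon$ still exceeds $d$, so that Lemma \ref{L:unif_Sum_d} provides uniform summability over the multi-indices $(p,\ell)$. This is precisely why the hypothesis $N > 2d$ is imposed in Theorem \ref{T:FRmult}: it leaves room to choose $\epsilon>0$ sufficiently small while keeping $N - d - \epsilon > d$. The higher dimensionality modifies only the exponents; the logical structure of the estimates and of the limit argument as $\nu\to 0$ is identical to the one-dimensional case.
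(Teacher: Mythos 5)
Your proposal is correct and follows essentially the same route as the paper: the same decomposition into $_1\Gppm$ and $_2\Gppm$, the same four-term splitting with the core--core term vanishing for $\mu\nu<1$, the same use of Lemmata \ref{L:decay_Gd} and \ref{L:unif_Sum_d} with the $\epsilon$-splitting, the set $F^{\omega}_{p,\ell}$ for the tail--tail term, and the same identification of $N>2d$ as the hypothesis that makes the exponent $N-d-\epsilon>d$ work. No substantive differences to report.
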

\begin{proof}
As we did in Lemma~\ref{lem:limzero}, set
\[
 _1\Gppm(\omega)= \chi_{\mu\Iapi}(\omega) \Gppm(\omega)
\]
and
\[
  _2 \Gppm(\omega)= \Gppm(\omega)- \phantom{a}_1\Gppm(\omega).
\]

Then, we split the norm as follows:
\begin{align}
  \nonumber
  &  \sum_{p,\ell} \sum_{k\in \Z^d\setminus\{0\}}\norm{ \overline{\Gppm}\pt{\omega-k\frac{\bpp}{\nu}}\Gppm(\omega)}_{\infty}
\\
  \label{eq:Kd}
  &=  \sum_{p,\ell} \sum_{k\in \Z^d\setminus\{0\}}\norm{ \overline{_1\Gppm}\pt{\omega-k\frac{\bpp}{\nu}} \phantom{a}_1 \Gppm(\omega)}_{\infty}
\\
  \label{eq:Hd}
  &\quad{}+   \sum_{p,\ell} \sum_{k\in \Z^d\setminus\{0\}}\norm{ \overline{_2\Gppm}\pt{\omega-k\frac{\bpp}{\nu}}\phantom{a}_1 \Gppm\pt{\omega}}_{\infty}
  \\
\label{eq:Id}
&\quad{}+   \sum_{p,\ell} \sum_{k\in \Z^d\setminus\{0\}}\norm{ \overline{_1\Gppm}\pt{\omega-k\frac{\bpp}{\nu}}\phantom{a}_2 \Gppm(\omega)}_{\infty}
\\    \label{eq:Md}
  &\quad{}+  \sum_{p,\ell} \sum_{k\in \Z^d\setminus\{0\}}\norm{\overline{_2\Gppm}\pt{\omega-k\frac{\bpp}{\nu}} \phantom{a}_2\Gppm(\omega)}_{\infty}.
\end{align}
Notice that if $\nu \mu< 1$, then the term in \eqref{eq:Kd} is identically zero for each $p\in\N$, since
the supports are disjoint. 

In order to analyze the term in \eqref{eq:Hd}, notice that 
for each $\omega \in \R^d$ there exist unique $\bar{p}, \bar{l}$ such that $\omega\in \Iapibar$. Notice that we have
\begin{equation}
\label{eq:distanced}
	\dist{\omega -\frac{k\bpp}{\nu}}{\mu \Iapi} > |k|\pt{\frac{1}{\nu}-\sqrt{d}\mu}  \bpp, \qquad \forall \omega \in \Iapi.
\end{equation}
For each $\omega \in \Rr^d$, by~\eqref{eq:distanced} we have
\begin{align*}
  &\abs{ \overline{_1\Gppm}\pt{\omega}\phantom{a}_2 \Gppm\pt{\omega-k\frac{\bpp}{\nu}}} \leq \frac{C^2_N}{\pt{1+\dist{\omega}{\mu\Iapi}}^{N-d}} \pt{\pt{\frac{1}{\nu}-\sqrt{d}\mu} |k|}^{d-N}.
\end{align*}
Our hypotheses grant that $N-d>d$, then the above remarks implies that 
\begin{align*}
 \sum_{p,\ell} \sum_{k\in \Z^d\setminus\{0\}}\abs{ \overline{_1\Gppm}\pt{\omega}\phantom{a}_2 \Gppm\pt{\omega-k\frac{\bpp}{\nu}}} &\leq \abs{ \overline{_1\Gppmbar}\pt{\omega}\phantom{a}_2 \Gppmbar\pt{\omega-k\frac{\bppbar}{\nu}}}\\
 &\leq C \sum_{k\in \Z^d\setminus\{0\}}\pt{\pt{\frac{1}{\nu}-\sqrt{d}\mu}k}^{d-N}\\
 & \leq \tilde{C} \pt{\frac{1}{\nu}-\sqrt{d}\mu}^{d-N},	
\end{align*}
where the latter tends to $0$ as $\nu \rightarrow 0$ and the constants are uniformly bounded with the respect of $\omega$. Hence the term in~\eqref{eq:Hd} has a limit vanishing as $\nu$ approaches zero.

The term in \eqref{eq:Id}, goes to zero as $\nu$ goes to zero as well. 
We notice that for each $\omega$, there exist $\bar{p}, \bar{l}$ such that 
$\omega \in \Iapibar$. For the same reason for each $p,l \in \pt{\N\setminus\ptg{\bar {p}}, J\setminus \ptg{\bar{\ell}}}$, there exists
$k_{p,l}$ such that $w-k \frac{\bpp}{\nu} \in \Iapi$. Therefore, 
\begin{align}
  \nonumber
  &\sum_{p,\ell} \sum_{k\in \Z^d\setminus\{0\}}\norm{ \overline{_1\Gppm}\pt{\omega-k\frac{\bpp}{\nu}}\phantom{a}_2 \Gppm(\omega)}_{\infty}\\
  \label{eq:quasiquasi}
  &= \sum_{p\neq \bar{p},\ell\neq \bar{\ell}}\norm{ \overline{_1\Gppm}\pt{\omega-k_{p,l}\frac{\bpp}{\nu}}\phantom{a}_2 \Gppm(\omega)}_{\infty}.
\end{align}
Then, applying equation \eqref{eq:distanced}, \eqref{eq:Dec_Gd} and Lemma \ref{L:unif_Sum_d} as in \eqref{eq:ultima1} we can conclude that the term in \eqref{eq:quasiquasi} goes to zero as $\nu$ goes to zero independently on $\omega$.

We consider now the term in \eqref{eq:Md}, set
\[
  F^{\omega}_{p,\ell} = \ptg{k : 0<\dist{\omega -\frac{k\bpp}{\nu}}{\mu\Iapi} < \frac{1}{2\nu}\bpp}
\]
then
\[
  |F^{\omega}_{p,\ell}| \leq 2^d, \qquad   \textrm{ uniformly in }   \omega.
\]
One can split the term in \eqref{eq:Md} as follows:
\begin{align*}
  \sum_{k\in \Z^d\setminus\{0\}}\abs{\overline{_2\Gppm}\pt{\omega-k\frac{\bpp}{\nu}} \phantom{a}_2\Gppm(\omega)}&\leq\sum_{k\notin F^{\omega}_{p,\ell}}\abs{\overline{_2\Gppm}\pt{\omega-k\frac{\bpp}{\nu}} \phantom{a}_2\Gppm(\omega)}\\
  &\qquad + 
  \sum_{k\in F^{\omega}_{p,\ell}}\abs{\overline{_2\Gppm}\pt{\omega-k\frac{\bpp}{\nu}} \phantom{a}_2\Gppm(\omega)}.
\end{align*}
Then, for each $\omega\in \Rr^d$, 
\begin{align}
 \sum_{k\notin F^{\omega}_{p,\ell}}\abs{\overline{_2\Gppm}\pt{\omega-k\frac{\bpp}{\nu}} \phantom{a}_2\Gppm(\omega)}\nonumber \leq \frac{\nu^{d-N}C_N}{\pt{1 + \dist{\omega}{\mu\Iapi}}^{N-d}}
  \end{align}
and
\begin{equation*}
  \sum_{k\in F^{\omega}_{p,\ell}}\abs{\overline{_2\Gppm}\pt{\omega-k\frac{\bpp}{\nu}} \phantom{a}_2\Gppm(\omega)}\leq \frac{\tilde{C}_N \nu^{-\epsilon}}{\pt{1 + \dist{\omega}{\mu\Iapi}}^{N-d- \epsilon}}.
\end{equation*}
This yields
\begin{align*}
  &\sum_{p \in \Z} \sum_{k\in \Z^d\setminus\{0\}}\abs{\overline{_2\Gppm}\pt{\omega-k\frac{\bpp}{\nu}} \phantom{a}_2\Gppm(\omega)}
  \!\leq \sum_{p \in \Z} \frac{C_N \nu^{\epsilon}}{\pt{1 + \dist{\omega}{\mu\Iapi}}^{N-d- \epsilon}}
\end{align*}
which goes to zero as $\nu$ goes to zero as desired. 
We stress that $\pt{1 + \dist{\omega}{\mu\Iapi}}^{N-d- \epsilon}$ is summable if $N-d- \epsilon>d$ which  is granted by Lemma~\ref{L:unif_Sum_d}. Since the bounds are all unifrom with the respect to $\omega$, we can conclude that the terms in $\eqref{eq:Md}$ goes to zero as $\nu$ does.
\end{proof}
\subsection{Proof of the main result}
\begin{proof}
From the $d=1$ case,  we get
\begin{align*}
   \Lprod{\Famnfg f} {f}&\leq\tfrac{1}{\nu}\ptg{\sup_{\omega\in\R^d}H_0(\omega) + H^{\nu}_{k\geq1}(\omega)}\|f\|_2^2
   \\
   \Lprod{\Famnfg f} {f}&\geq\tfrac{1}{\nu}\ptg{\inf_{\omega\in\R^d}H_0(\omega) - H^{\nu}_{k\geq1}(\omega)}\|f\|_2^2
    \end{align*}
    
with 
\begin{align*}
H_0&= \sum_{p,\ell}\abs{\Gppm\pt{\omega}}^2
\\
H^{\nu}_{k\geq1}&=\frac{1}{\nu} 
   \sum_{p,\ell}\sum_{k \in \Z^d\setminus \ptg{0}}\norm{\overline{\Gppm}\pt{\omega-k\frac{\bpp}{\nu}}\Gppm(\omega)}_{\infty}.
\end{align*}
By hypothesis, we know that
    \[
    a \leq H_0(\omega) \leq b.
    \]
Lemma \ref{lem:limzerod} implies that $\lim_{\nu\rightarrow0}H^{\nu}_{k\geq1}=0$. Hence, there exists $\nu_0>0$ such that
 \[
         H^{\nu}_{k\geq1} < \tfrac{a}{2}, \quad \forall 0<\nu<\nu_0.
 \]
Then, for all $\nu\in (0, \nu_0)$, the action of the frame operator can be bounded as follows
\begin{align*}
	\abs{\Lprod{\Famnfg f} {f}} &\leq \tfrac{1}{\nu}\ptg{\norm{H_0}_{\infty} + {H^{\nu}_{k\geq1}}}\norm{f}_2^2 \leq \tfrac{1}{\nu}\pt{b+\tfrac{a}{2}}\norm{f}_2^2
\\
	\abs{\Lprod{\Famnfg f} {f}} &\geq \tfrac{1}{\nu}\ptg{\inf_{\omega\in\R^d}{H_0} - H^{\nu}_{k\geq1}}\norm{f}_2^2 \geq \tfrac{a}{2\nu}\norm{f}_2^2.
\end{align*}
\end{proof}

\begin{rem}
As we did in dimension $d=1$ (cf. Theorem~\ref{T:exGauss}), we can show that the normalized Gaussian fulfills the hypothesis of Theorem~\ref{T:FRmult}.
\end{rem}
\begin{figure}
\begin{tabular}{cc}
\subfloat[Time view, $\alpha=1,~p=3$]{\includegraphics[width=.4\linewidth]{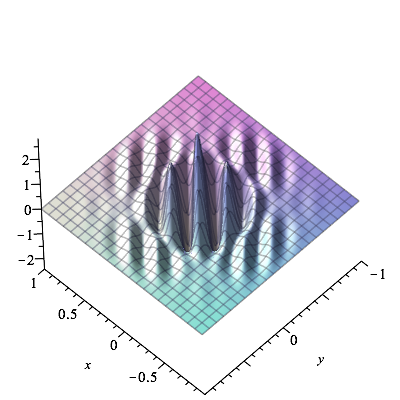}} &
\subfloat[Frequency view, $\alpha=1,~p=3$]{\includegraphics[width=.4\linewidth]{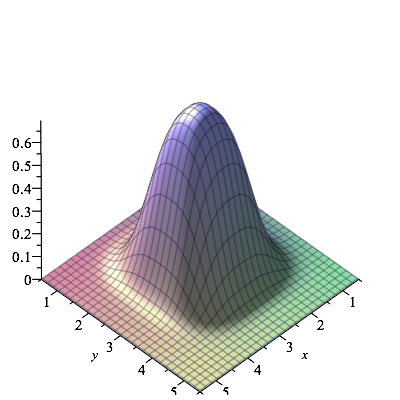}} \\
\subfloat[Time view, $\alpha=1,~p=5$]{\includegraphics[width=.4\linewidth]{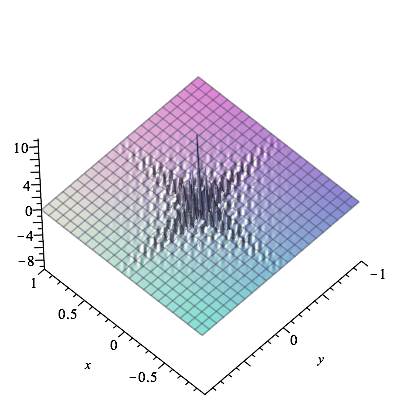}} &
\subfloat[Frequency view, $\alpha=1,~p=5$]{\includegraphics[width=.4\linewidth]{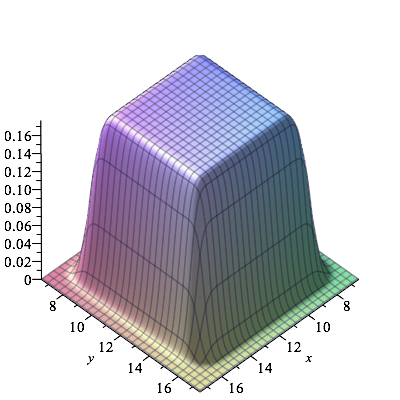}} \\
\end{tabular}
\caption{Time and frequency outlook of two window functions. We observe heavy decay in time while in frequency we localize around a certain frequency. The window $\varphi$ is in both cases a normalized Gaussian.}
\label{F:Win2D}		
\end{figure}
\begin{figure}
	\includegraphics[width=.4\linewidth]{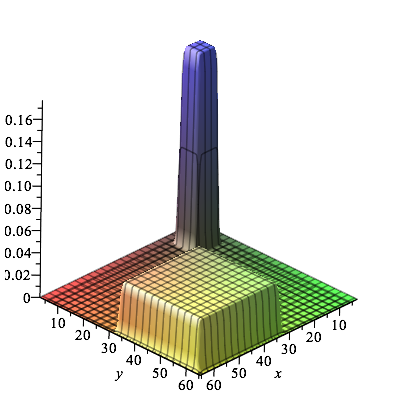}
	\caption{Two window functions for $\alpha =1$ and  $p=5,7$. We see how the normalization $\bpp^{-1}$ and the width of $\Iapi$ affects the shape of the windows. The window $\varphi$ is, in both cases, a normalized Gaussian.}
\label{fig:test}		
\end{figure}

\section{Conclusions}
We constructed a frequency-adapted frame which covers  Gabor and Stockwell-related frames. Our setting includes also general $\alpha$-phase-space partitioning. This approach appears natural to describe $\alpha$-Modulation spaces and this will be subject of a future work.\\
\indent In \cite{Battisti2015}, the author prove that the DOST basis is able to diagonalize the $S$-transform
with a suitable window function which is essentially a boxcar window in the frequency domain and that the evaluation
of the DOST-coefficients turns out to be the evaluation of the $S$-transform with this particular window in a suitable lattice.
The natural question is if the $\alpha$-DOST bases introduced in this paper have the same property, clearly not with respect to
the $S$-transform, but in relation to another transform, similar to the \emph{flexible Gabor-wavelet transform} (or
\emph{$\alpha$-transform}), see e.g. \cite{Fornasier2007157} for the the definition.\\
\indent The $n$-dimensional case considered in Section \ref{sec:dimd} is restricted to the case $\alpha=1$, hence a suitable phase-space partitioning is yet to be defined for $\alpha \in [0,1)$ and will be part of our future studies. This issue has been
already analyzed in the two dimensional case by N. Morten in \cite{Mo10} using the theory of Brushlets. \\
\indent From a computational stand point, we aim to implement and compare our results with existent methods. 
We are interested in testing in various applications such as medical and seismic imaging and also general image processing. 
As pointed out in the introduction, we remark that our approach consider the $n$-dimensional case in a peculiar way: instead of applying the one dimensional DOST in each direction sequentially, we provide a native $n$-dimensional setting. This approach is similar to the Wavepackets and Curvelets one, see \cite{MR2362408,MR2728710}.\\
\indent A natural question arises: is the density of our frames comparable with the Gabor case? For instance, is it true that if the volume of the lattice is strictly lower than $1$, the Gaussian leads to a frame? And is this condition independent on $\alpha?$\hspace{1em}\\
\paragraph{\bf{Acknowledgments}}
We thank Elena Cordero, Fabio Nicola, Luigi Riba and Anita Tabacco  for the useful discussions on the subject. We are also graceful to Maarten V. de Hoop and Man Wah Wong for the opportunity to talk about our projects in international meetings.\\
The authors were partially supported by  the Gruppo
Nazionale per l'Analisi Matematica, la Probabilit\`a e le loro
Applicazioni (GNAMPA) of the Istituto Nazionale di Alta Matematica
(INdAM). The first author is partially supported by the Research Project FIR (Futuro in Ricerca) 2013 \emph{Geometrical and qualitative aspects of PDE's}.

\bibliography{Bib_Stock}
\bibliographystyle{siam} 

\end{document}